\newcommand{\RR}{\mathbb R}
\newcommand{\NN}{\mathbb N}
\newcommand{\ZZ}{\mathbb Z}
\newcommand{\TT}{\mathbb T}
\newcommand{\pat}{\partial_t}
\newcommand{\pax}{\partial_x}
\newcommand{\warrow}{\rightharpoonup}
\newcommand{\vertiii}[1]{{\left\vert\kern-0.25ex\left\vert\kern-0.25ex\left\vert #1 
    \right\vert\kern-0.25ex\right\vert\kern-0.25ex\right\vert}}
\newcommand{\F}{\bar{f}}
\newcommand{\G}{\bar{g}}
\newtheorem{satz}{Proposition}[section]
\newtheorem{lem}[satz]{Lemma} 
\newtheorem{remark}[satz]{Remark}
\newtheorem{theorem}{Theorem}
\newtheorem{definition}{Definition}
\definecolor{luh-dark-blue}{rgb}{0.0, 0.313, 0.608}
\def\jump#1{{[\hspace{-2pt}[#1]\hspace{-2pt}]}}
\title{On the thin film Muskat and the thin film Stokes equations}
\author[G. Bruell]{Gabriele Bruell}
\email{gabriele.bruell@ntnu.no}
\address{Department of Mathematical Sciences, Norwegian University of Science and Technology, 7491 Trondheim, Norway}
\author[R. Granero-Belinch\'{o}n]{Rafael Granero-Belinch\'{o}n}
\email{rafael.granero@unican.es}
\address{Departamento  de  Matem\'aticas,  Estad\'istica  y  Computaci\'on,  Universidad  de Cantabria.  Avda.  Los  Castros  s/n,  Santander,  Spain.}
\begin{document}
\begin{abstract}
The present paper is concerned with the analysis of two strongly coupled systems of degenerate parabolic partial differential equations arising in multiphase thin film flows. In particular, we consider the two-phase thin film Muskat problem and the two-phase thin film approximation of the Stokes flow under the influence of both, capillary and gravitational forces. The existence of global  weak solutions for \emph{medium size} initial data in \emph{large function spaces} is proved. Moreover, exponential decay results towards the equilibrium state are established, where the decay rate can be estimated by explicit constants depending on the physical parameters of the system. Eventually, it is shown that if the initial datum satisfies additional (low order) Sobolev regularity, we can propagate Sobolev regularity for the corresponding solution. The proofs are based on a priori energy estimates in Wiener and Sobolev spaces.

\end{abstract}

\subjclass[2010]{35K25, 35D30, 35R35, 35Q35, 76B03}
\keywords{Muskat problem, moving interfaces, two-phase thin film approximation, free-boundary problems, Stokes flow}

\maketitle
{\small
\tableofcontents}

\allowdisplaybreaks
\section{Introduction}

The dynamics of viscous thin fluid films is a widely studied topic in the area of fluid dynamics. A classical approach to gain insight in the evolutionary behavior of thin fluid films is to apply lubrication approximation and cross sectional averaging to the governing equations, which leads to simplified model equations. Considering thin films it is instinctive  that surface tension effects play a significant role. A common feature of many thin film approximations is that the presents of surface tension leads to fourth-order equations. Due to the degenerate character of the equations, it is not to be expected that classical solutions exist globally in time, unless the initial datum is close to a stable steady state. Pioneering works on the existence of global weak solutions of the classical thin film equation and their properties are due to Bernis \& Friedman \cite{Bernis1990} followed by Beretta, Bertsch \& Dal Passo \cite{Beretta1995} and Bertozzi \& Pugh  \cite{Bertozzi1996}. Since then, the study of thin film equations attracted a lot of attention and many authors contributed to a deeper understanding with respect to several aspects of the underlying mechanisms.

\medskip

The concern of the present work is the existence of global weak solutions for two parabolic, strongly coupled and degenerated systems arising as a thin film approximation: the thin film Muskat problem modeling a two-phase flow in porous medium and the thin film Stokes problem that arises as a model of a two-phase flow for highly viscous Newtonian fluids. 
Both, the Muskat and the Stokes problem share the same scenario: the fluid with label $``-"$ (i.e. whose velocity, pressure, viscosity, and density are $u_-,p_-,\mu_-$, and $\rho_-$, respectively) lies between the free boundary $f=f(x,t)$ and an impervious flat bottom, while the fluid with label $``+"$ (i.e. whose velocity, pressure, viscosity, and density are $u_+,p_+,\mu_+$, and $\rho_+$, respectively) is between the free surface $h=h(x,t)$ and the internal wave $f$. Over the top fluid we have air that is assumed to behave like vacuum. In other words, the (common) domains that we consider in this paper can be described as $\Omega(t)=\overline{\Omega_+(t)}\cup \overline{\Omega_-(t)}$, where
\begin{align*}
\Omega_+(t)&=\{(x,y)\in I\times \RR,\;\; f(x,t)<y<h(x,t)\},\\
\Omega_-(t)&=\{(x,y)\in I\times \RR,\;\; 0<y<f(x,t)\},
\end{align*}
and the functions $f,h$ satisfy
$$
h(x,t)>f(x,t)>0.
$$
Here, $I$ denotes the domain of the horizontal variable. 
\begin{figure}[h]\label{fig1}
\begin{center} 
\begin{tikzpicture}[domain=0:3*pi, scale=0.9] 
\draw (pi,2.5) node { air};
\draw[ultra thick, smooth, variable=\x, luh-dark-blue] plot (\x,{0.3*cos(\x r)+2}); 
\fill[luh-dark-blue!10] plot[domain=0:3*pi] (\x,{0.2*sin(\x r)+1}) -- plot[domain=3*pi:0] (\x,{0.3*cos(\x r)+2});
\draw[ultra thick, smooth, color=red] plot (\x,{0.2*sin(\x r)+1});
\fill[red!10] plot[domain=0:3*pi] (\x,0) -- plot[domain=3*pi:0] (\x,{0.2*sin(\x r)+1});
\draw[very thick,<->] (3*pi+0.4,0) node[right] {$x$} -- (0,0) -- (0,3.5) node[above] {$z$};
\node[right] at (3*pi,1.7) {$h(x,t)$};
\node[right] at (3*pi,0.9) {$f(x,t)$};
\node[right] at (1,0.5) {$\Omega_-(t)$};
\node[right] at (1,1.6) {$\Omega_+(t)$};
 \draw (2*pi,1.5) node { $p_+,u_+, \rho_+,\mu_+$}; 
    \draw (2*pi,0.4) node { $p_-,u_-,\rho_-,\mu_-$}; 
\draw[-] (0,-0.3) -- (0.3, 0);
\draw[-] (0.5,-0.3) -- +(0.3, 0.3);
\draw[-] (1,-0.3) -- +(0.3, 0.3);
\draw[-] (1.5,-0.3) -- +(0.3, 0.3);
\draw[-] (2,-0.3) -- +(0.3, 0.3);
\draw[-] (2.5,-0.3) -- +(0.3, 0.3);
\draw[-] (3,-0.3) -- +(0.3, 0.3);
\draw[-] (3.5,-0.3) -- +(0.3, 0.3);
\draw[-] (4,-0.3) -- +(0.3, 0.3);
\draw[-] (4.5,-0.3) -- +(0.3, 0.3);
\draw[-] (5,-0.3) -- +(0.3, 0.3);
\draw[-] (5.5,-0.3) -- +(0.3, 0.3);
\draw[-] (6,-0.3) -- +(0.3, 0.3);
\draw[-] (6.5,-0.3) -- +(0.3, 0.3);
\draw[-] (7,-0.3) -- +(0.3, 0.3);
\draw[-] (7.5,-0.3) -- +(0.3, 0.3);
\draw[-] (8,-0.3) -- +(0.3, 0.3);
\draw[-] (8.5,-0.3) -- +(0.3, 0.3);
\draw[-] (9,-0.3) -- +(0.3, 0.3);
\draw (pi,-0.7) node { bottom};
\end{tikzpicture}   
\end{center}
\caption{The fluid-air interface $h$ and the fluid-fluid interface $f$.}
\end{figure}
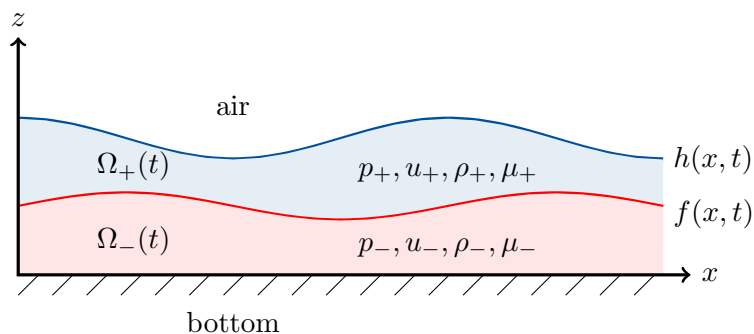

\medskip

\subsection{The thin film Muskat problem}

We are going to introduce the Muskat problem, a thin film approximation of the problem and some prior results. Moreover, we reformulate the thin film approximation in a way to be suitable for our subsequent study.

\subsubsection{The equations}
The Muskat problem reads
\begin{alignat*}{2}
\mu_\pm u_\pm+\nabla p_\pm&=-\rho_\pm G e_2,  \qquad&&\text{in}\quad \Omega_\pm(t)\times[0,T]\,,\\
\nabla\cdot u_\pm &=0,  \qquad&&\text{in}\quad \Omega_\pm(t)\times[0,T]\,,\\
\jump{p} &= \gamma_f \mathcal{H}_{\Gamma(f)}\qquad &&\text{on }\Gamma(f)\times[0,T],\\
p_+ &= -\gamma_h \mathcal{H}_{\Gamma(h)}\qquad &&\text{on }\Gamma(h)\times[0,T],\\
\pat f &= u_\pm\cdot(-\pax f,1)\qquad &&\text{on }\Gamma(f)\times[0,T],\\
\pat h &= u_+\cdot(-\pax h,1)\qquad &&\text{on }\Gamma(h)\times[0,T],\\
u_-\cdot e_2 &= 0\qquad &&\text{on }\Gamma_0\times[0,T],
\end{alignat*}
where $\Gamma_0:=\{y=0\}$ is the bottom and the fluid-fluid and fluid-air interfaces are located at $\Gamma(f):=\{y=f\}$ and $\Gamma (h):=\{y=h\}$, respectively. The constants $\mu_{\pm}$ and $\rho_{\pm}$ denote the viscosity and density of the lower and upper fluid, respectively. Moreover, $\gamma_f$ is the surface tension coefficient at the interface $\Gamma(f)$, while $\gamma_h$ is the surface tension coefficient at the interface $\Gamma (h)$. Eventually, $\mathcal{H}_{\Gamma(h)}$ and $\mathcal{H}_{\Gamma(f)}$ denote the curvature of the interfaces $\Gamma(h)$ and $\Gamma(f)$, respectively. The constant $G$ represents the gravitational acceleration and $\jump{f} = f^+ - f^-$ the jump of a function $f$ across $\Gamma(f)$. 
The Muskat problem appears as a model of geothermal reservoirs, aquifers or oil wells \cite{CF, Muskat:porous-media} and has received a lot of attention in the last years. We refer the interested reader for instance to \cite{ambrose2004well,ccfgl,CGSVfiniteslope,c-g07,escher2011generalized,e-m10,emw15,gancedo2014absence,GG,KK,pruess2016muskat,SCH}.
Under the assumption of \emph{small layer thickness}, Escher, Matioc \& Matioc \cite{escher2012modelling} applied lubrication approximation and cross sectional averaging to derive the following system of partial differential equations for the evolution of two thin films in a porous medium
\begin{align}\label{eq:system}
\begin{split}
\partial_t f &= -\partial_x \left[f \left(\mu_-^{-1}\gamma_h \partial_x^3h+ \mu_-^{-1}\gamma_f\partial_x^3 f-\mu_-^{-1}G(\rho_--\rho_+)\partial_x f-\mu_-^{-1}G\rho_+\partial_xh\right)\right], \\
\partial_t h&=-\partial_x\left[f \left(\mu_-^{-1}\gamma_h \partial_x^3h+ \mu_-^{-1}\gamma_f\partial_x^3 f-\mu_-^{-1}G(\rho_--\rho_+)\partial_x f-\mu_-^{-1}G\rho_+\partial_xh\right) \right. \\
&\qquad\quad\;\;
 \left.+(h-f)\left(\mu_+^{-1}\gamma_h \partial_x^3h - \mu_+^{-1}G\rho_+\partial_xh\right) \right]
 \end{split}
\end{align}
for $x\in I=(0,\pi)$ and $t>0$. The system \eqref{eq:system} is supplemented with initial conditions for $f$ and $h$:
\begin{equation}\label{eq:initial}
f(0,x)=f_0(x), \qquad h(0,x)=h_0(x),
\end{equation}
and no-flux boundary conditions
\begin{equation}\label{eq:boundary}
\partial_x f = \partial_x h =\partial_x^3 f = \partial_x^3 h =0, \qquad\mbox{at}\quad x=0\quad \mbox{and}\quad{x=\pi}.
\end{equation}
We assume that both layers initially have a positive thickness, that is
\begin{equation}\label{positivity}
h_0(x)>f_0(x)>0.
\end{equation}

\subsubsection{Prior results for the thin film Muskat problem}
Since Escher, Matioc \& Matioc \cite{escher2012modelling} derived the thin film Muskat problem \eqref{eq:system}, this system has been intensively studied. In the absence of surface tension effects ($\gamma_f=\gamma_h=0$), problem \eqref{eq:system} reduces to a system of second order. In this case local existence of classical solutions in $H^2$ and exponential stability (in the $H^2$ norm) of steady state solutions were determined \cite{escher2012modelling}. The proofs are based on semigroup theory and an energy functional given by
$$
\mathscr{E}_{\text{gravity}}(h,f)=\int |f|^2+R|h|^2dx,
$$
where $R$ is a positive constant depending on the physical parameters of the problem. In a subsequent work, Escher \& Matioc \cite{escher2013existence} studied the case when surface tension effects are taken into account and proved local existence and asymptotic stability of steady states for the fourth order system \eqref{eq:system} in the Sobolev space $H^4$. Moreover, the authors found that
\begin{equation}\label{eq:energyH}
\mathscr{H}(h,f)=\int f\log(f)-f+1+S\left[(h-f)\log(h-f)-h+f+1\right]dx
\end{equation}
 is an energy functional, where $S$ is a positive constant depending on the physical parameters of the problem,
and studied non-flat equilibria which exist under stabilizing surface tension effects and destabilizing stratification. Concerning global solutions, Matioc \cite{matioc2012non} proved the existence of nonnegative global weak solutions in $H^1$ for the (purely) capillary driven thin film Muskat problem using a priori estimates provided by the energy functionals \eqref{eq:energyH} and
$$
\mathscr{E}_{\text{capillary}}(h,f)=\int |\pax f|^2+T|\pax h|^2dx,
$$
where $T$ is a positive constant depending on the physical parameters of the problem.
Escher, Lauren\c{c}ot \& Matioc \cite{ELM11} proved the existence of nonnegative global weak $L^2$ solutions for the gravity driven thin film. In addition to the existence result, they also proved exponential convergence towards equilibria in $L^2$ norms.
In the case when the thin film Muskat problem is considered on $I=\mathbb{R}$, Lauren\c{c}ot \& Matioc \cite{laurenccot2013gradient,laurencot2014thin} observed that \eqref{eq:system} is a gradient flow for the functional $\mathscr{E}_{\text{gravity}}$ with respect to the 2-Wasserstein distance in the set of Borel probability measures on $\mathbb{R}$ with finite second moment. This observation allowed them to obtain the existence of global weak $L^2$ solutions. Furthermore, Lauren\c{c}ot \& Matioc \cite{laurenccot2017finite,laurencot2017self} proved the existence of self-similar profiles and the convergence of weak $L^2$ solutions towards them (at an unknown rate) and the finite speed of propagation of a certain family of weak solutions.

\subsubsection{Reformulation of the thin film Muskat problem}\label{sectionreformulation}
The original problem \eqref{eq:system} is posed on the interval $[0,\pi]$ with boundary conditions \eqref{eq:boundary}. However, instead of dealing with the interval $[0,\pi]$ and no-flux boundary conditions \eqref{eq:boundary}, we will generalize the problem to consider periodic functions over the interval $[-\pi,\pi]$. Let us explain this in further detail.
We denote by $\tilde{f}$ and $\tilde{h}$ the even extensions of the unknowns $f$ and $h$, i.e. for $f$ defined on $[0,\pi]$ with boundary conditions \eqref{eq:boundary}, we define
$$
\tilde{f}(x,t)=f(|x|,t) \qquad (x,t)\in[-\pi,\pi]\times \RR_+,
$$
and similarly for $\tilde{h}$. Subsequently, we will drop the tilde notation and write $f,h$ for the unknowns defined on $[-\pi,\pi]$. 
Furthermore, note that equation \eqref{eq:system} preserves the even symmetry. Thus, we can generalize the problem by abandoning the eveness assumption from the initial data and seek for $2\pi$-periodic solutions $f,h$ to \eqref{eq:system}.
In order to recover the physically motivated problem posed on $[0,\pi]$ with boundary conditions \eqref{eq:boundary}, it is sufficient to consider an even periodic initial datum over $[-\pi,\pi]$ and to restrict the corresponding solution to the interval $[0,\pi]$.

\medskip

Defining the new unknown $g:=h-f$, system \eqref{eq:system} can be rewritten as
\begin{align}\label{eq:system0}
\begin{split}
\partial_t f &= -\partial_x \left[f \left(\mu_-^{-1}\gamma_h \partial_x^3(g+f)+ \mu_-^{-1}\gamma_f\partial_x^3 f-\mu_-^{-1}G(\rho_--\rho_+)\partial_x f-\mu_-^{-1}G\rho_+\partial_x(g+f)\right)\right], \\
\partial_t g&=-\partial_x\left[g\left(\mu_+^{-1}\gamma_h \partial_x^3(g+f) - \mu_+^{-1}G\rho_+\partial_x(g+f)\right) \right]. 
 \end{split}
\end{align}
Set
\[
	b:=\rho_+,\qquad b_\mu := \frac{\mu_-}{\mu_+}b,\qquad b_\rho =\frac{\rho_-}{\rho_+}b,
\]
and, if $\gamma_h>0$,
$$
A=\frac{\gamma_h}{G}, \qquad A_\mu := \frac{\mu_-}{\mu_+}A,\qquad A_{\gamma}:=\frac{\gamma_f+\gamma_h}{\gamma_h}A.
$$
Introducing a new time variable
\[
	\tilde t :=G\mu^{-1}_-t,
\]
and suppressing thereafter the tildes, \eqref{eq:system0} reduces to
\begin{align}\label{eq:system1}
\begin{split}
\partial_t f &= -\partial_x \left[f \left(A_\gamma \partial_x^3 f + A\partial_x^3g-b_\rho \partial_xf - b\partial_xg\right) \right], \\
\partial_t g&=-\partial_x\left[g\left(A_\mu \partial_x^3f + A_\mu \partial_x^3 g - b_\mu \partial_x f - b_\mu \partial_x g\right)\right].
 \end{split}
\end{align}
Note that \eqref{positivity} translates into
$$
g_0(x):=h_0(x)-f_0(x)>0,\qquad f_0(x)>0.
$$

Let us define the mean of a function $f$ as
\[
	\langle f \rangle := \frac{1}{2\pi}\int_{-\pi}^\pi f(x)\, dx.
\]
We have the following quick observation:
\begin{lem}[Conservation of mass for \eqref{eq:system1}]
Let $(f,g)$ be a smooth solution to \eqref{eq:system1} on $[0,T)$, then the mass of $f$ and $g$ is preserved in time, that is
\[
	\langle f(t) \rangle = \langle f_0 \rangle \quad \mbox{and} \quad	\langle g(t) \rangle = \langle g_0 \rangle \qquad \mbox{for all}\quad t \in [0,T).
\]
\end{lem}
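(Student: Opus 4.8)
The plan is to exploit the conservative (divergence) form of both equations in \eqref{eq:system1} together with the $2\pi$-periodicity of the solution. Abbreviate the two fluxes
\[
	J_f := f\left(A_\gamma \partial_x^3 f + A\partial_x^3 g - b_\rho \partial_x f - b\partial_x g\right), \qquad J_g := g\left(A_\mu \partial_x^3 f + A_\mu \partial_x^3 g - b_\mu \partial_x f - b_\mu \partial_x g\right),
\]
so that \eqref{eq:system1} reads $\partial_t f = -\partial_x J_f$ and $\partial_t g = -\partial_x J_g$. Since $(f,g)$ is assumed to be a smooth solution on $[0,T)$, the functions $f,g$ — and hence the expressions $J_f,J_g$, which are polynomial in $f,g$ and their $x$-derivatives — are smooth and $2\pi$-periodic in $x$ for each fixed $t\in[0,T)$, so one may differentiate under the integral sign.

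Then I would simply integrate the $f$-equation over one period:
\[
	\frac{d}{dt}\langle f(t)\rangle = \frac{1}{2\pi}\int_{-\pi}^\pi \partial_t f(x,t)\, dx = -\frac{1}{2\pi}\int_{-\pi}^\pi \partial_x J_f(x,t)\, dx = -\frac{1}{2\pi}\bigl(J_f(\pi,t)-J_f(-\pi,t)\bigr)=0,
\]
where the last two equalities use the fundamental theorem of calculus and the periodicity of $J_f$. Integrating in time from $0$ to $t$ gives $\langle f(t)\rangle = \langle f_0\rangle$, and the identical computation with $J_g$ in place of $J_f$ yields $\langle g(t)\rangle = \langle g_0\rangle$.

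There is essentially no obstacle here; the only point deserving a word is the interchange of $\tfrac{d}{dt}$ and $\int_{-\pi}^\pi$, which is immediate from the assumed smoothness of $(f,g)$. I would also remark that in the even-extension setting on $[0,\pi]$ one may replace periodicity by the no-flux boundary conditions \eqref{eq:boundary}, which force $J_f$ and $J_g$ to vanish at $x=0$ and $x=\pi$, leading to the same conclusion.
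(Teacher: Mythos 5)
Your proof is correct and is exactly the standard divergence-form/periodicity argument that the paper implicitly intends; the paper states the lemma as a ``quick observation'' without any written proof. Your closing remark about the no-flux boundary conditions on $[0,\pi]$ being equivalent to the periodic setting is also accurate and consistent with the reformulation described in Section 1.1.3.
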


In spirit of the previous lemma, we introduce the zero mean functions
\begin{equation}\label{newvariables}
	\F := f-\langle f_0 \rangle \qquad \mbox{and} \qquad \G := g - \langle g_0 \rangle
\end{equation}
and obtain that
\begin{align}\label{eq:system2}
\begin{split}
\partial_t \F &= -\partial_x \left[(\F+\langle f_0 \rangle) \left(A_\gamma \partial_x^3 \F + A\partial_x^3\G-b_\rho \partial_x\F - b\partial_x\G\right)\right], \\
\partial_t \G&=-\partial_x\left[ (\G+\langle g_0 \rangle)\left(A_\mu \partial_x^3\F + A_\mu \partial_x^3 \G - b_\mu \partial_x \F - b_\mu \partial_x \G\right)\right].
 \end{split}
\end{align}

Eventually, system \eqref{eq:system2} can be written as the following cross-diffusion system
\begin{align}\label{eq:system3}
\begin{split}
\partial_t \F &= -\langle f_0 \rangle \left[ A_\gamma \partial_x^4 \F + A \partial_x^4 \G - b_\rho \partial_x^2 \F - b\partial_x^2 \G \right]+N_{1,A}+N_{1,b}, \\
\partial_t \G&=-\langle g_0 \rangle \left[ A_\mu \partial_x^4 \F + A_\mu \partial_x^4 \G - b_\mu \partial_x^2 \F - b_\mu\partial_x^2 \G \right]+N_{2,A}+N_{2,b},
 \end{split}
\end{align}
where the nonlinear terms $N_{i,A}$ and $N_{i,b}$, $i=1,2$ are given by
\begin{alignat}{2}
\label{N1}
N_{1,A}&:=-\partial_x \left[ \F \left( A_\gamma \partial_x^3 \F + A \partial_x^3 \G \right)\right], \qquad N_{1,b}&&:= \partial_x \left[ \F \left(b_\rho \partial_x \F + b \partial_x \G\right) \right],\\
\label{N3}
N_{2,A}&:=-\partial_x \left[ \G \left( A_\mu \partial_x^3 \F +  A_\mu\partial_x^3 \G \right) \right],\qquad N_{2,b}&&:=  \partial_x \left[ \G \left( b_\mu\partial_x \F +  b_\mu\partial_x \G\right) \right].
\end{alignat}

In the present paper we use the above formulation  \eqref{eq:system3}, i.e. given a positive initial datum $(f_0,g_0)$ for \eqref{eq:system1}, we consider \eqref{eq:system3}, where the initial datum is given by
$$
\F_0 := f_0-\langle f_0 \rangle \qquad \mbox{and} \qquad \G_0 := g_0 - \langle g_0 \rangle,
$$
and the constants $\langle f_0\rangle$ and $\langle g_0\rangle$ are uniquely determined by $f_0$ and $g_0$.

\medskip

\subsection{The thin film Stokes problem}

\subsubsection{The equations}
With the same notation as in the previous section, the two-phase Stokes problem modeling the flow of highly viscous fluids reads
\begin{alignat*}{2}
-\mu_\pm \Delta u_\pm+\nabla p_\pm&=-\rho_\pm G e_2,  \qquad&&\text{in}\quad \Omega_\pm(t)\times[0,T]\,,\\
\nabla\cdot u_\pm &=0,  \qquad&&\text{in}\quad \Omega_\pm(t)\times[0,T]\,,\\
\jump{\mu(\nabla u+(\nabla u)^T)-p\text{Id}}\cdot(-\pax f,1) &= -\gamma_f \mathcal{H}_{\Gamma(f)}(-\pax f,1)\qquad &&\text{on }\Gamma(f)\times[0,T],\\
(\mu_+(\nabla u_++(\nabla u_+)^T)-p_+\text{Id})\cdot(-\pax h,1) &= \gamma_h \mathcal{H}_{\Gamma(h)}(-\pax h,1)\qquad &&\text{on }\Gamma(h)\times[0,T],\\
\jump{u} &= 0\qquad &&\text{on }\Gamma(f)\times[0,T],\\
\pat f &= u_\pm\cdot(-\pax f,1)\qquad &&\text{on }\Gamma(f)\times[0,T],\\
\pat h &= u_+\cdot(-\pax h,1)\qquad &&\text{on }\Gamma(h)\times[0,T],\\
u_- &= 0\qquad &&\text{on }\Gamma_0\times[0,T].
\end{alignat*}

The interaction of two immiscible thin fluid layers with thickness $f$ and $h-f$, respectively (see Figure \ref{fig1}) can be modeled by the two-phase thin film Stokes equation:
\begin{align*} 
\begin{split}
\partial_t f &=\partial_x \left[  2P f^3 \mathscr{D} f + Q(3f^2h-f^3)\mathscr{D} h\right],\\
\partial_t h&=\partial_x \left[ P(3f^2h-f^3) \mathscr{D} f + (2Q\mu(h-f)^3+2Q(h^3-(h-f)^3)) \mathscr{D} h  \right],
\end{split}
\end{align*}
together with \eqref{eq:initial}, \eqref{eq:boundary} and \eqref{positivity}. When only surface tension effects are taken into account, that is $\gamma_h,\gamma_f>0$ and $G=0$, the operator $\mathscr{D}$ and the constants $P$ and $Q$ are given by
\begin{align}\label{eq:systemStokesSurface}
\mathscr{D}=-\pax^3,\qquad P=\frac{\gamma_f}{6\mu_-},\qquad Q=\frac{\gamma_h}{6\mu_-},\qquad \mu=\frac{\mu_-}{\mu_+},
\end{align}
while in the case of a purely gravity driven flow, that is $\gamma_h=\gamma_f=0$ and $G>0$, we have
\begin{align}\label{eq:systemStokesGravity}
\mathscr{D}=\pax,\qquad P=\frac{G(\rho_--\rho_+)}{6\mu_-},\qquad Q=\frac{G\rho_+}{6\mu_-},\qquad \mu=\frac{\mu_-}{\mu_+}.
\end{align}
The above system was derived by Escher, Matioc \& Matioc in \cite{escher2013thin} using lubrication approximation and cross sectional averaging. We remark that for fluids in the Stokes regime gravitational and capillary effects  appear at different order in the approximation. As a consequence, the case where both capillary and gravitational effects are taken into account simultaneously appears to be physically not relevant \cite{escher2013thin}.

\subsubsection{Prior results for the thin film Stokes problem}
There are fewer mathematical results for the thin film Stokes problem. For the gravity driven thin film, Escher, Matioc \& Matioc \cite{escher2013thin} proved local existence of solutions in the Bessel potential space $H^{s,p}$, $2\leq p$ and $s\in(\frac{3}{2},2]$, and exponential convergence towards the flat equilibrium for initial data sufficiently close to their mean in $H^{2,p}$, $p\geq2$.
The results are proved using semigroup theory and the following energy functional
$$
\mathscr{F}_{\text{gravity}}(h,f)=\int |f|^2+\frac{Q}{P}|h|^2dx,
$$
where $Q,P$ are as in \eqref{eq:systemStokesGravity}.
Similarly, when surface tension effects are the only driving force, Escher, Matioc \& Matioc \cite{escher2013thin} proved local existence of solutions in the Bessel potential space $H^{s,p}$, $1< p$ and $s\in(1+\frac{1}{p},4]$, and exponential stability of steady states under a smallness assumption in $H^{4,p}$, $p>1$. As before, the proofs are based on semigroup theory and the energy functional
$$
\mathscr{F}_{\text{capillary}}(h,f)=\int |\pax f|^2+\frac{Q}{P}|\pax h|^2dx,
$$
where $Q,P$ are as in \eqref{eq:systemStokesSurface}.
Eventually, Escher \& Matioc \cite{escher2014non} used the a priori estimates provided by the latter energy functional to prove the existence of nonnegative global weak solutions in the Sobolev space $H^1$.

\subsubsection{Reformulation of the thin film Stokes problem} Following the argument in Section \ref{sectionreformulation}, we consider the equivalent problem with periodic boundary conditions on $[-\pi,\pi]$ for the unknowns $f$ and $g:=h-f$, which reads

\begin{align}\label{eq:systemStokes2}
\begin{split}
\partial_t f &=\partial_x \left[  \left(2(P+Q) f^3+Q3f^2g\right) \mathscr{D} f + Q(3f^2g+2f^3)\mathscr{D}g\right],\\
\partial_t g&=\partial_x \left[ \left(2Q\mu g^3+3(P+Q)f^2g +6Qg^2f)\right) \mathscr{D} f + (2Q\mu g^3+Q(3f^2g+6g^2f)) \mathscr{D} g  \right].
\end{split}
\end{align}

Introducing the new time variable $\tilde t := t/Q$ and suppressing thereafter the tildes, the system above reduces to
\begin{align}
\label{eq:system_Stokesfinal}
\begin{split}
\partial_t f &=\partial_x \left[  \left(2\rho f^3 +3f^2g\right)\mathscr{D} f + (2f^3+3f^2g)\mathscr{D} g\right],\\
\partial_t g&=\partial_x \left[\left(2\mu g^3+3\rho f^2g+6fg^2\right) \mathscr{D} f + \left(2\mu g^3 + 3 f^2g + 6fg^2 \right) \mathscr{D} g  \right],
\end{split}
\end{align}
where
\[
\rho= \frac{P+Q}{Q}.
\]
\begin{lem}[Conservation of mass for \eqref{eq:system_Stokesfinal}]
Let $(f,g)$ be a smooth solution to \eqref{eq:system_Stokesfinal} on $[0,T)$, then the mass of $f$ and $g$ is preserved in time, that is
\[
	\langle f(t) \rangle = \langle f_0 \rangle \quad \mbox{and} \quad	\langle g(t) \rangle = \langle g_0 \rangle \qquad \mbox{for all}\quad t \in [0,T).
\]
\end{lem}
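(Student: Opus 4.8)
The plan is to exploit that both evolution equations in \eqref{eq:system_Stokesfinal} are written in conservative (divergence) form, so that the spatial integral of each unknown is a conserved quantity. Concretely, writing the first equation as $\partial_t f = \partial_x \mathcal{F}[f,g]$ with
\[
\mathcal{F}[f,g] := \left(2\rho f^3 +3f^2g\right)\mathscr{D} f + (2f^3+3f^2g)\mathscr{D} g,
\]
and the second as $\partial_t g = \partial_x \mathcal{G}[f,g]$ with the analogous flux, I would first observe that since $(f,g)$ is a smooth (hence $2\pi$-periodic, by our reformulation) solution on $[0,T)$, the fluxes $\mathcal{F}[f,g]$ and $\mathcal{G}[f,g]$ are themselves smooth $2\pi$-periodic functions of $x$ for each $t\in[0,T)$: indeed $\mathscr{D}$ equals either $-\pax^3$ or $\pax$, both of which preserve periodicity, and products and powers of periodic functions remain periodic.

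Next I would differentiate the mean in time. Using the smoothness of the solution to justify interchanging $\tfrac{d}{dt}$ with $\int_{-\pi}^\pi \cdot\, dx$, and then inserting \eqref{eq:system_Stokesfinal},
\[
\frac{d}{dt}\langle f(t)\rangle = \frac{1}{2\pi}\int_{-\pi}^\pi \partial_t f(x,t)\, dx = \frac{1}{2\pi}\int_{-\pi}^\pi \partial_x \mathcal{F}[f,g](x,t)\, dx = \frac{1}{2\pi}\bigl(\mathcal{F}[f,g](\pi,t)-\mathcal{F}[f,g](-\pi,t)\bigr) = 0,
\]
the last equality being the periodicity of $\mathcal{F}[f,g]$ in $x$. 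The same computation with $\mathcal{G}$ gives $\tfrac{d}{dt}\langle g(t)\rangle = 0$. Integrating in time from $0$ to $t$ yields $\langle f(t)\rangle = \langle f_0\rangle$ and $\langle g(t)\rangle = \langle g_0\rangle$ for all $t\in[0,T)$.

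There is essentially no genuine obstacle here; the only points requiring a word of care are the interchange of differentiation and integration (which is routine given smoothness on the compact interval) and the vanishing of the boundary contribution. In the periodic formulation the latter is immediate; if one instead prefers to argue directly on the physical domain $[0,\pi]$, one checks that the no-flux boundary conditions \eqref{eq:boundary}, namely $\pax f = \pax h = \pax^3 f = \pax^3 h = 0$ at $x=0,\pi$, force each flux to vanish at the endpoints (every summand carries a factor $\mathscr{D} f$ or $\mathscr{D} g$, i.e.\ a factor $\pax f$, $\pax g$, $\pax^3 f$ or $\pax^3 g$), so the boundary term again disappears. Either way the conclusion follows.
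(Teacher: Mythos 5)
Your proof is correct and is the standard argument: both equations of \eqref{eq:system_Stokesfinal} are in divergence form, so integrating over the periodic cell $\TT$ (or, equivalently, over $[0,\pi]$ with the no-flux conditions, since every flux term carries a factor $\mathscr{D}f$ or $\mathscr{D}g$ that vanishes at the endpoints) kills the right-hand side, and the means are constant in time. The paper states this lemma without proof precisely because the computation is this routine, so your write-up matches the intended reasoning.
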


Implementing the zero mean variables \eqref{newvariables}, system \eqref{eq:system_Stokesfinal} can be written as
\begin{align}
\label{eq:system_Stokesfinal2}
\begin{split}
\partial_t \F &= \partial_x \left[\left(2\rho \langle f_0 \rangle^3 +3\langle f_0 \rangle^2\langle g_0 \rangle\right)\mathscr{D}\F + (2\langle f_0 \rangle^3+3\langle f_0 \rangle^2\langle g_0 \rangle)\mathscr{D}\G\right] + N_1 + N_2,\\
\partial_t \G&=\pax \left[\left(2\mu \langle g_0 \rangle^3+3\rho \langle f_0 \rangle^2\langle g_0 \rangle+6\langle f_0 \rangle \langle g_0 \rangle^2\right) \mathscr{D} \F\right] +N_3+N_4\\
&\qquad + \pax\left[\left(2\mu \langle g_0 \rangle^3 + 3 \langle f_0 \rangle^2\langle g_0 \rangle + 6\langle f_0 \rangle \langle g_0 \rangle^2 \right) \mathscr{D} \G\right],
\end{split}
\end{align}
where the nonlinear terms $N_i$, $i=1,\ldots,4$, are given by
\begin{align*}
N_1&:=\partial_x \left[3(\F^2 \G + 2 \F \G \langle f_0 \rangle + \F^2 \langle g_0 \rangle + \G \langle f_0 \rangle^2 +2\F \langle f_0 \rangle \langle g_0 \rangle)\mathscr{D} \F \right]\nonumber\\
&\qquad+\partial_x \left[ 2\rho (\F^3 + 3 \F^2 \langle f_0\rangle + 3 \F \langle f_0 \rangle^2)\mathscr{D} \F \right] 
\\
N_2&:=\partial_x \left[ 3(\F^2 \G + 2 \F \G \langle f_0 \rangle + \F^2 \langle g_0 \rangle + \G \langle f_0 \rangle^2 +2\F \langle f_0 \rangle \langle g_0 \rangle) \mathscr{D} \G \right]\nonumber\\
&\qquad +\partial_x \left[ 2(\F^3 + 3 \F^2 \langle f_0\rangle + 3 \F \langle f_0 \rangle^2)\mathscr{D} \G \right] 
,\\
N_3&:=\partial_x \left[\left(2\mu (\G^3 + 3\G^2 \langle g_0 \rangle +3 \G \langle g_0\rangle^2) + 3\rho(\F^2 \G + 2 \F \G \langle f_0 \rangle + \F^2 \langle g_0 \rangle + \G \langle f_0 \rangle^2 +2\F \langle f_0 \rangle \langle g_0 \rangle) \right.\right.\nonumber\\
&\qquad \quad\left.\left.+ 6(\G^2 \F + 2 \G \F \langle g_0 \rangle + \G^2 \langle f_0 \rangle + \F \langle g_0 \rangle^2 +2\G \langle g_0 \rangle \langle f_0 \rangle)\right)\mathscr{D} \F \right] 
,\\
N_4&:=\partial_x \left[\left(2\mu (\G^3 + 3\G^2 \langle g_0 \rangle +3 \G \langle g_0\rangle^2) + 3(\F^2 \G + 2 \F \G \langle f_0 \rangle + \F^2 \langle g_0 \rangle + \G \langle f_0 \rangle^2 +2\F \langle f_0 \rangle \langle g_0 \rangle) \right.\right.\nonumber\\
&\qquad \quad\left.\left.+ 6(\G^2 \F + 2 \G \F \langle g_0 \rangle + \G^2 \langle f_0 \rangle + \F \langle g_0 \rangle^2 +2\G \langle g_0 \rangle \langle f_0 \rangle)\right)\mathscr{D} \G \right]. 
\end{align*}
%

In the present paper we use the above formulation  \eqref{eq:system_Stokesfinal2}, i.e. given a positive initial datum $(f_0,g_0)$ for \eqref{eq:systemStokes2}, we consider \eqref{eq:system_Stokesfinal2}, where the initial datum is given by
$$
\F_0 := f_0-\langle f_0 \rangle \qquad \mbox{and} \qquad \G_0 := g_0 - \langle g_0 \rangle,
$$
and the constants $\langle f_0\rangle$ and $\langle g_0\rangle$ are uniquely determined by $f_0$ and $g_0$.

\bigskip

\section{Functional framework}\label{ssub:prel}
We write $\TT=[-\pi,\pi]$. Let $n\in \ZZ^+$ and denote by
$$
W^{n,p}(\TT)=\left\{f\in L^p(\TT), \pax^{n} f\in L^p(\TT)\right\}
$$
the standard $L^p$-based Sobolev space with norm
$$
\|f\|_{W^{n,p}}^p=\|f\|_{L^p}^p+\|\pax^n f\|_{L^p}^p.
$$

For a function $u\in L^1(\TT)$ and $k\in \ZZ$ we recall that
$$
\hat{u}(k)=\frac{1}{2\pi}\int_{\TT}u(x)e^{-ix k}dx
$$
denotes the expression of the $k-$th Fourier coefficient of $u$.
If $u\in L^1(\TT)$ and the sequence of its Fourier coefficients $\{\hat u(k)\}_{k\in \ZZ}$ is convergent, then the Fourier series representation of $u$ is given by
$$
u(x)=\sum_{k\in\ZZ}\hat{u}(k)e^{ix k}.
$$


If $p=2$, we use the notation $H^n(\TT):=W^{n,2}(\TT)$. The $L^2$-based Sobolev spaces on $\TT$ of order $\alpha \in \RR^+$ can be defined by
\[
	H^\alpha(\TT):=\left\{u\in L^2(\TT), \mbox{ such that } \|u\|^2_{H^\alpha}:=\sum_{k\in \ZZ}(1+|k|^{2\alpha})|\hat u(k)|^2 < \infty\right\}.
\]
We use the convention $H^0(\TT)=L^2(\TT)$.
The space consisting of all Lebesgue integrable functions on $\TT$, whose Fourier series is absolutely convergent, is called the \emph{Wiener algebra on $\TT$} and we denote it by $A(\TT)$. In accordance to the definition of Sobolev spaces, we introduce the spaces
\[
	A^\alpha(\TT):=\left\{u\in L^1(\TT), \mbox{ such that } \|u\|_{A^\alpha}:= \sum_{k\in \ZZ}(1+|k|^\alpha)|\hat u(k)|< \infty \right \}
\]
for $\alpha \in \RR^+$ and write $A(\TT)= A^0(\TT)$. Lastly, for $\alpha \in \RR^+$, we denote by $\dot H^\alpha (\TT)$ the space of functions belonging to $H^\alpha(\TT)$ which have zero mean. The space $\dot A^\alpha(\TT)$ is defined accordingly. Notice that
\[
	\|u\|_{\dot H^\alpha}:=\sum_{k\in \ZZ}|k|^{2\alpha}|\hat u(k)|^2\qquad \mbox{and}\qquad \|u\|_{\dot A^\alpha}:=\sum_{k\in \ZZ}|k|^{\alpha}|\hat u(k)|
\]
are equivalent norms on $\dot H^\alpha(\TT)$ and $\dot A^\alpha(\TT)$, respectively.
Moreover, the spaces $\dot{A}^\alpha(\TT)$, $\alpha\in \RR^+$, are Banach algebras and form a Banach scale:
\begin{lem}\label{lem:Wiener} Let $\alpha\in\RR^+$ be a fixed parameter and $f,g\in \dot A^\alpha(\TT)$, then
\begin{equation}\label{eq:L21}
\|fg\|_{\dot{A}^\alpha}\leq 2^{\alpha+1}\|f\|_{\dot{A}^\alpha(\TT)}\|g\|_{\dot{A}^\alpha(\TT)}.
\end{equation}
Furthermore, the spaces $\dot{A}^\alpha(\TT)$ form a Banach scale with the following interpolation inequality
\begin{equation}\label{interpolation}
\|f\|_{\dot{A}^\alpha}\leq \|f\|_{\dot A}^{1-\theta}\|f\|_{\dot{A}^{\frac{\alpha}{\theta}}}^{\theta}\quad \mbox{for all }\quad 0<\theta<1.
\end{equation}
\end{lem}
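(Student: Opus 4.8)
The plan is to carry everything out on the Fourier side, since by the norm equivalence recalled just above the lemma, $\|\cdot\|_{\dot A^\alpha}$ is simply a weighted $\ell^1$-norm of the Fourier coefficients. With that viewpoint, \eqref{eq:L21} turns into a convolution (Young-type) estimate and \eqref{interpolation} into a Hölder inequality for series.

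\emph{Product estimate \eqref{eq:L21}.} First I would use that, for $f,g$ with absolutely convergent Fourier series, $\widehat{fg}(k)=\sum_{j\in\ZZ}\hat f(j)\,\hat g(k-j)$ with the double series absolutely convergent. The only trick is to distribute the weight $|k|^\alpha$: from $|k|\le|j|+|k-j|$ and the elementary bound $(a+b)^\alpha\le2^\alpha(a^\alpha+b^\alpha)$ for $a,b\ge0$, $\alpha\ge0$ (immediate from $a+b\le2\max\{a,b\}$), one gets
\[
|k|^\alpha\,|\hat f(j)|\,|\hat g(k-j)|\le 2^\alpha\big(|j|^\alpha+|k-j|^\alpha\big)\,|\hat f(j)|\,|\hat g(k-j)|.
\]
Summing in $k$ and $j$ and applying Fubini, the two resulting pieces factorize into $\|f\|_{\dot A^\alpha}\|g\|_{\dot A^0}$ and $\|f\|_{\dot A^0}\|g\|_{\dot A^\alpha}$. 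Since $f$ and $g$ have zero mean, all sums range over $|k|\ge1$, where $|k|^\alpha\ge1$, so $\|f\|_{\dot A^0}\le\|f\|_{\dot A^\alpha}$ and likewise for $g$; hence the two pieces together are bounded by $2^{\alpha+1}\|f\|_{\dot A^\alpha}\|g\|_{\dot A^\alpha}$. (Strictly, $fg$ need not have zero mean, but $\|\cdot\|_{\dot A^\alpha}$ does not see the $k=0$ mode, so this causes no trouble, and in all applications $fg$ appears under a derivative anyway.)

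\emph{Interpolation inequality \eqref{interpolation}.} Here I would simply note the pointwise-in-$k$ identity $|k|^\alpha|\hat f(k)|=|\hat f(k)|^{1-\theta}\big(|k|^{\alpha/\theta}|\hat f(k)|\big)^{\theta}$ and sum over $k\in\ZZ$, applying Hölder's inequality for series with the conjugate exponents $p=1/(1-\theta)$ and $q=1/\theta$, which gives
\[
\|f\|_{\dot A^\alpha}=\sum_k|k|^\alpha|\hat f(k)|\le\Big(\sum_k|\hat f(k)|\Big)^{1-\theta}\Big(\sum_k|k|^{\alpha/\theta}|\hat f(k)|\Big)^{\theta}=\|f\|_{\dot A}^{1-\theta}\|f\|_{\dot A^{\alpha/\theta}}^{\theta},
\]
with the convention that the right-hand side is $+\infty$ (and the inequality trivial) when $f\notin\dot A^{\alpha/\theta}$.

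Finally, the Banach-algebra and Banach-scale assertions follow at once: completeness of $\dot A^\alpha(\TT)$ is transported from the weighted $\ell^1$ space $\{(c_k)_{k\neq0}:\sum_{k\neq0}|k|^\alpha|c_k|<\infty\}$ through the Fourier isometry, and \eqref{eq:L21} then gives the algebra property while \eqref{interpolation} is the interpolation estimate. I do not anticipate a genuine obstacle here; the only points needing care are bookkeeping the zero Fourier mode (so that the $\dot A^0$-factors can be absorbed into $\dot A^\alpha$-factors, which uses the zero-mean hypothesis together with $|k|\ge1$ for $0\ne k\in\ZZ$) and choosing the single crude splitting $(a+b)^\alpha\le2^\alpha(a^\alpha+b^\alpha)$ that works uniformly for $\alpha\le1$ (subadditive regime) and $\alpha\ge1$ (convex regime) and is already sharp enough for the stated constant $2^{\alpha+1}$.
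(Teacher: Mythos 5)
Your proposal is correct and follows essentially the same route as the paper's proof: the product estimate is the same convolution argument, distributing the weight via the same elementary bound $|k|^\alpha\le 2^\alpha(|j|^\alpha+|k-j|^\alpha)$ and then absorbing the $\|\cdot\|_{\dot A}$ factors into $\|\cdot\|_{\dot A^\alpha}$ using zero mean and $|k|\ge1$; the interpolation is the identical H\"older splitting with exponents $1/\theta$ and $1/(1-\theta)$. The only cosmetic difference is that you make the zero-mode/zero-mean absorption step explicit, whereas the paper leaves it implicit in passing from $2^\alpha(\|f\|_{\dot A^\alpha}\|g\|_{\dot A}+\|f\|_{\dot A}\|g\|_{\dot A^\alpha})$ to $2^{\alpha+1}\|f\|_{\dot A^\alpha}\|g\|_{\dot A^\alpha}$.
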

\begin{proof}
Let $\alpha \in \RR^+$ be fixed and $f,g\in \dot{A}^\alpha(\TT)$. The product of $f$ and $g$ can be represented as
\[
fg(x)= \sum_{n\in \ZZ}\left (\sum_{m\in \ZZ}\hat f(n-m)\hat g(m)\right)e^{-inx}.
\]
Due to the basic inequality
$$
|n|^\alpha\leq 2^\alpha\max\{|n-m|,|m|\}^\alpha\leq 2^\alpha\left(|n-m|^\alpha+|m|^\alpha\right)
$$
we obtain the estimate
\begin{align*}
\|fg\|_{\dot{A}^\alpha}&=\sum_{n\in \ZZ}|n|^\alpha\left|\sum_{m\in \ZZ}\hat f(n-m)\hat g(m)\right|\\
&\leq \sum_{n\in \ZZ}\sum_{m\in \ZZ}|n|^\alpha|\hat f(n-m)| |\hat g(m)|\\
&\leq 2^{\alpha}\sum_{n\in \ZZ}\sum_{m\in \ZZ} \left(|n-m|^\alpha+|m|^\alpha\right)|\hat f(n-m)| |\hat g(m)|\\
&\leq 2^{\alpha}\left(\|f\|_{\dot{A}^\alpha}\|g\|_{\dot{A}}+\|f\|_{\dot{A}}\|g\|_{\dot{A}^\alpha}\right)\\
&\leq 2^{\alpha+1} \|f\|_{\dot{A}^\alpha}\|g\|_{\dot{A}^\alpha},
\end{align*}
which proves inequality \eqref{eq:L21}.
The interpolation inequality \eqref{interpolation} is due to the H\"older inequality for $p=1/\theta$ and $q=1/(1-\theta)$:
\begin{align*}
\|f\|_{\dot{A}^\alpha}&=\sum_{n\in\ZZ}|n|^\alpha|\hat{f}(n)|^{\theta}|\hat{f}(n)|^{1-\theta}\leq \left(\sum_{n\in\ZZ}|n|^{\frac{\alpha}{\theta}}|\hat{f}(n)|\right)^{\theta}\left(\sum_{n\in\ZZ}|\hat{f}(n)|\right)^{1-\theta}= \|f\|_{\dot A}^{1-\theta}\|f\|_{\dot{A}^{\frac{\alpha}{\theta}}}^{\theta}.
\end{align*}
\end{proof}

Let us collect some embedding properties of the spaces $ A^\alpha(\TT)$. Clearly, for any $\alpha, \beta \in \RR^+$ with $\alpha \geq \beta$ we have that
$ A^\alpha(\TT) \subset A^\beta (\TT)$. Moreover, it is easy to verify that 
\[
	C^{k+1}(\TT) \subset  A^k (\TT) \subset C^{k}(\TT) \qquad \mbox{for all}\quad k\in \NN.
\]
In addition we introduce the space  $\mathcal{M}(0,T;X)$, the space of Radon measures from an interval $[0,T]$ to a Banach space $X$.

\medskip

Eventually, we end this section by a comment on a general convention: We denote by $c>0$ a generic constant, which may differ from occurrence to occurrence. Sometimes we use the notation $c=c(\cdot, \cdot, \ldots)$ in order to emphasize the dependence of $c$ on various parameters.


\bigskip

\section{Main results and discussion}
The goal of this paper is to obtain the global existence and decay towards equilibria for the thin film Muskat and the thin film Stokes problems for appropriate initial data. In particular, our results consider both the gravity driven case (when surface tension effects are neglected, i.e. $\gamma_h=\gamma_f=0$) and the capillary driven case (when $\gamma_h,\gamma_f\neq0$). 

\subsection{The thin film Muskat problem}
First we introduce our notion of weak solution for \eqref{eq:system3} when surface tension effects are considered:
\begin{definition}\label{defi1}We say that a pair of zero mean functions $(\F,\G)\in \left(L^1\left(0,T;W^{3,1}(\TT)\right)\right)^2$ is a weak solution of \eqref{eq:system3} corresponding to the initial datum $(\F_0,\G_0)$ if and only if
\begin{multline*}
-\int_\TT \F_0\phi(0)dx-\int_0^T\int_\TT \F\pat\phi dxdt=\langle f_0 \rangle\int_0^T\int_\TT\pax^3\left[ A_\gamma \F + A \G\right]\pax\phi + \pax\left(b_\rho \F + b \G \right)\pax\phi dxdt\\
+\int_0^T\int_\TT\left(- \F \left( A_\gamma \partial_x^3 \F + A \partial_x^3 \G \right)+ \F \left(b_\rho \partial_x \F + b \partial_x \G\right) \right)\pax\phi dxdt, 
\end{multline*}
and
\begin{multline*}
-\int_\TT \G_0\psi(0)dx-\int_0^T\int_\TT \G\pat\psi dxdt=\langle g_0 \rangle \int_0^T\int_\TT A_\mu\pax^3\left[  \F +  \G\right]\pax\psi+b_\mu\pax\left[  \F +  \G\right]\pax\psi dxdt \\
+\int_0^T\int_\TT\left[-\G \left( A_\mu \partial_x^3 \F +  A_\mu\partial_x^3 \G \right) +\F \left( b_\mu\partial_x \F +  b_\mu\partial_x \G\right) \right]\pax\psi dxdt,
\end{multline*}
for all $(\phi,\psi)\in C^{1}_c([0,T)\times \TT)$.
\end{definition}

When surface tension is neglected, our definition of weak solutions for \eqref{eq:system3} reads:
\begin{definition}\label{defi2}We say that a pair of zero mean functions $(\F,\G)\in \left(L^1\left(0,T;W^{1,1}(\TT)\right)\right)^2$ is a weak solution of \eqref{eq:system3} corresponding to the initial datum $(\F_0,\G_0)$ if and only if
\begin{align*}
-\int_\TT \F_0\phi(0)dx-\int_0^T\int_\TT \F\pat\phi dxdt=&\langle f_0 \rangle\int_0^T\int_\TT\pax\left(b_\rho \F + b \G \right)\pax\phi dxdt\\
&+\int_0^T\int_\TT \F \left(b_\rho \partial_x \F + b \partial_x \G\right) \pax\phi dxdt, 
\end{align*}
and
\begin{align*}
-\int_\TT \G_0\psi(0)dx-\int_0^T\int_\TT \G\pat\psi dxdt=&\langle g_0 \rangle \int_0^T\int_\TT b_\mu\pax\left[  \F +  \G\right]\pax\psi dxdt \\
&+\int_0^T\int_\TT\F \left( b_\mu\partial_x \F +  b_\mu\partial_x \G\right)\pax\psi dxdt,
\end{align*}
for all $(\phi,\psi)\in C^{1}_c([0,T)\times \TT)$.
\end{definition}

Before stating the main results, some notation needs to be introduced. We define the following functionals:
\begin{align}
\mathcal{E}_s(\F,\G)&:=  \|\F\|_{\dot{A}^s} + \|\G\|_{\dot{A}^s},\label{notation1}\\
E_s(\F,\G)&:=  \|\F\|_{\dot{H}^s}^2 + \|\G\|_{\dot{H}^s}^2,\label{notation1b}\\
\mathscr{E}_n(\F,\G)&:=\|\pax^n\F\|_{L^\infty} + \|\pax^n\G\|_{L^\infty}.\label{notation1c}
\end{align}
Moreover, we set
\begin{align*}
\sigma_{1,A}&:= \langle f_0 \rangle A_\gamma- \langle g_0 \rangle A_\mu - (A_\mu+2A_\gamma+2A)\mathcal{E}_0(\bar{f}_0,\bar{g}_0),\\
\sigma_{2,A}&:=\langle g_0 \rangle A_\mu - \langle f_0 \rangle A - (A_\mu+2A_\gamma+2A)\mathcal{E}_0(\bar{f}_0,\bar{g}_0),\\
\sigma_{1,b}&:= \langle f_0 \rangle b_\rho - \langle g_0 \rangle b_\mu - \mathcal{E}_0(\bar{f}_0,\bar{g}_0)(2b_\rho+2b+4b_\mu),\\
\sigma_{2,b}&:=\langle g_0 \rangle b_\mu - \langle f_0 \rangle b - \mathcal{E}_0(\bar{f}_0,\bar{g}_0)(2b_\rho+2b+4b_\mu).
\end{align*}
Then, the first result is formulated as follows.
\begin{theorem}[Two-phase thin film Muskat system with surface tension]\label{theorem1} Let $\gamma_f,\gamma_h>0$ and $(\bar{f}_0, \bar{g}_0)\in \left(\dot A(\TT)\right)^2$ be the initial datum for \eqref{eq:system3} satisfying
\[
\mathcal{E}_0(\bar{f}_0, \bar{g}_0)<\min\{\langle f_0\rangle,\langle g_0\rangle\},
\qquad \min\{\sigma_{1,A},\sigma_{2,A},\sigma_{1,b},\sigma_{2,b}\}>0.
\]
Then:
\begin{itemize}
\item[a)] \textbf{Existence:} There exist at least one global weak solution in the sense of Definition \ref{defi1} of \eqref{eq:system3} having the regularity
\begin{align*}
(\F,\G)\in &\Big(L^\infty\left([0,T]\times \TT\right)\cap L^{\frac{4}{3}}\left(0,T;W^{3,\infty}(\TT)\right)\cap L^1\left(0,T;C^{3+\alpha}(\TT)\right)  \\
& \qquad \cap \mathcal{M}\left(0,T;W^{4,\infty}(\TT)\right)\cap L^2\left(0,T;\dot H^2(\TT)\right) \Big)^2
\end{align*}
for any $T>0$, where $\alpha \in [0,\frac{1}{2})$.
\item[b)] \textbf{Exponential decay:} The solution satisfies
$$
\|\F(T)\|_{L^\infty}+\|\G(T)\|_{L^\infty}\leq 	\mathcal{E}_0(\F_0,\G_0)e^{-(\delta_A + \delta_b) T},
$$
where $\delta_A=\delta_A(\F_0,\G_0, \rho_{\pm}, \mu_{\pm}, \gamma_f, \gamma_h)>0$ and $\delta_b=\delta_b(\F_0,\G_0, \rho_{\pm}, \mu_{\pm})>0$ are certain explicit constants depending on the initial datum and the physical parameters.
\item[c)] \textbf{Uniqueness:}
If 
$$
(\F,\G)\in \left(L^1\left(0,T;\dot{A}^4(\TT)\right)\right)^2,
$$
then the weak solution is unique.
\end{itemize}
\end{theorem}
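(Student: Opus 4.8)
The plan is to obtain \textbf{a)}, \textbf{b)} and \textbf{c)} from a single family of a priori estimates in the Wiener scale, using the Banach-algebra inequality \eqref{eq:L21} and the interpolation inequality \eqref{interpolation}. First I would regularise \eqref{eq:system3} by a Galerkin (Fourier truncation) scheme: letting $P_N$ denote the projection onto the modes $|k|\le N$, one solves the finite-dimensional ODE system for $(\hat\F(k),\hat\G(k))_{|k|\le N}$ obtained by applying $P_N$ to the right-hand side of \eqref{eq:system3}, which is locally solvable by Picard--Lindel\"of. The key observation is the modewise dissipation: $\frac{d}{dt}|\hat\F(k)|$ carries the self-damping $-\langle f_0\rangle(A_\gamma k^4+b_\rho k^2)|\hat\F(k)|$ and $\frac{d}{dt}|\hat\G(k)|$ the self-damping $-\langle g_0\rangle(A_\mu k^4+b_\mu k^2)|\hat\G(k)|$, while the linear cross terms contribute at most $\langle g_0\rangle(A_\mu k^4+b_\mu k^2)|\hat\F(k)|$ and $\langle f_0\rangle(Ak^4+bk^2)|\hat\G(k)|$. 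Summing over $k$, bounding the four nonlinear terms in \eqref{N1} and \eqref{N3} by \eqref{eq:L21}--\eqref{interpolation} (each being a derivative of a product of $\F$ or $\G$ with a third derivative, so that after interpolating, its contribution is of size $(A_\mu+2A_\gamma+2A)\,\mathcal E_0(\F,\G)\,(\|\F\|_{\dot A^4}+\|\G\|_{\dot A^4})$, and analogously of size $(2b_\rho+2b+4b_\mu)\,\mathcal E_0(\F,\G)\,(\|\F\|_{\dot A^2}+\|\G\|_{\dot A^2})$ for the $b$-terms), and regrouping the $k^4$- and $k^2$-parts, one is led to a differential inequality whose $\dot A^4$- and $\dot A^2$-coefficients remain bounded below by $\sigma_{1,A},\sigma_{2,A}$ and $\sigma_{1,b},\sigma_{2,b}$ as long as $\mathcal E_0(\F,\G)\le\mathcal E_0(\F_0,\G_0)$. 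A continuity/bootstrap argument using the hypotheses then shows that $t\mapsto\mathcal E_0(\F,\G)$ is non-increasing --- in particular it stays below $\mathcal E_0(\F_0,\G_0)<\min\{\langle f_0\rangle,\langle g_0\rangle\}$, which via $\dot A^0\hookrightarrow C(\TT)$ keeps $f=\F+\langle f_0\rangle$ and $g=\G+\langle g_0\rangle$ strictly positive, so that no degeneracy occurs --- and that $\int_0^T(\|\F\|_{\dot A^4}+\|\G\|_{\dot A^4})\,dt\le c\,\mathcal E_0(\F_0,\G_0)$ for every $T>0$, all uniformly in $N$.

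For \textbf{a)}, interpolating between $\dot A^0$ and $\dot A^4$ converts these two bounds into the stated regularity of the approximations: $\mathcal E_0$ non-increasing gives $L^\infty([0,T]\times\TT)$; $\mathcal E_3\le\mathcal E_0^{1/4}\mathcal E_4^{3/4}$ gives $L^{4/3}(0,T;\dot A^3)\hookrightarrow L^{4/3}(0,T;W^{3,\infty})$; $\mathcal E_{3+\alpha}\le\mathcal E_0^{1-\theta}\mathcal E_4^{\theta}$ with $\theta=(3+\alpha)/4<1$ together with H\"older in time gives $L^1(0,T;\dot A^{3+\alpha})\hookrightarrow L^1(0,T;C^{3+\alpha})$ for $\alpha\in[0,\tfrac12)$; $\mathcal E_2\le\mathcal E_0^{1/2}\mathcal E_4^{1/2}$ gives $L^2(0,T;\dot H^2)$; and the bound on $\int_0^T\mathcal E_4\,dt$ gives $L^1(0,T;\dot A^4)\hookrightarrow\mathcal M(0,T;W^{4,\infty})$. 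Using \eqref{eq:system3} to bound $\partial_t(P_N\F),\partial_t(P_N\G)$ in $L^{4/3}(0,T;\dot A^{-1})$ uniformly in $N$, the Aubin--Lions lemma gives strong convergence of a subsequence in $L^{4/3}(0,T;\dot A^s)$ for every $s<3$; this, together with the weak-$*$ compactness above and standard weak--strong arguments, suffices to pass to the limit in each nonlinear term of the weak formulation of Definition \ref{defi1}, and the limit inherits all the regularity above and attains $(\F_0,\G_0)$.

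Part \textbf{b)} follows directly from the dissipative inequality of the first paragraph: since $|k|\ge 1$ on zero-mean functions one has $\|u\|_{\dot A^4}\ge\|u\|_{\dot A^0}$ and $\|u\|_{\dot A^2}\ge\|u\|_{\dot A^0}$, so the inequality reduces to $\frac{d}{dt}\mathcal E_0(\F,\G)+(\delta_A+\delta_b)\,\mathcal E_0(\F,\G)\le 0$ with $\delta_A:=\min\{\sigma_{1,A},\sigma_{2,A}\}>0$ (the capillary part, whence the dependence on $\gamma_f,\gamma_h$) and $\delta_b:=\min\{\sigma_{1,b},\sigma_{2,b}\}>0$; Gr\"onwall together with $\|u\|_{L^\infty}\le\|u\|_{\dot A^0}$ yields $\|\F(T)\|_{L^\infty}+\|\G(T)\|_{L^\infty}\le\mathcal E_0(\F_0,\G_0)e^{-(\delta_A+\delta_b)T}$. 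For \textbf{c)}, given two weak solutions $(\F_i,\G_i)\in(L^1(0,T;\dot A^4))^2$, $i=1,2$, I would run the same $\dot A^0$ estimate for the differences $\delta\F:=\F_1-\F_2$ and $\delta\G:=\G_1-\G_2$: the linear part again produces the dissipation $\sigma_{1,A}\|\delta\F\|_{\dot A^4}+\sigma_{2,A}\|\delta\G\|_{\dot A^4}+\sigma_{1,b}\|\delta\F\|_{\dot A^2}+\sigma_{2,b}\|\delta\G\|_{\dot A^2}$, while the differences of the nonlinear terms are bilinear in $(\delta\F,\delta\G)$ and $(\F_i,\G_i)$ and, by \eqref{eq:L21}, \eqref{interpolation} and Young's inequality, are bounded by $\varepsilon(\|\delta\F\|_{\dot A^4}+\|\delta\G\|_{\dot A^4})+C_\varepsilon\,\omega(t)\,(\|\delta\F\|_{\dot A^0}+\|\delta\G\|_{\dot A^0})$ with $\omega:=1+\|\F_1\|_{\dot A^4}+\|\G_1\|_{\dot A^4}+\|\F_2\|_{\dot A^4}+\|\G_2\|_{\dot A^4}\in L^1(0,T)$; choosing $\varepsilon$ small enough to absorb the top-order term into the dissipation and applying Gr\"onwall forces $\delta\F\equiv\delta\G\equiv 0$.

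The main obstacle is the sharp bookkeeping underlying the a priori estimate of the first paragraph: each of the four nonlinear terms \eqref{N1}, \eqref{N3} must be split and estimated just precisely enough for its contribution to be absorbed by the net dissipation --- which is weaker than the bare fourth-order damping because of the cross-diffusion coupling between $\F$ and $\G$ --- and this must be carried out so that exactly the quantities $\sigma_{1,A},\sigma_{2,A},\sigma_{1,b},\sigma_{2,b}$ appear, since these govern both the smallness threshold and the decay rate. Once the estimate is available in this form, the Galerkin construction, the compactness and passage to the limit, and the Gr\"onwall uniqueness argument are essentially routine.
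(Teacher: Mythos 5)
Your proposal follows essentially the same route as the paper: a Fourier–Galerkin scheme, the modewise $\dot A$-estimate with the nonlinearities bounded by the Banach-algebra inequality \eqref{eq:L21} and interpolation \eqref{interpolation} so that exactly the combinations $\sigma_{1,A},\sigma_{2,A},\sigma_{1,b},\sigma_{2,b}$ govern the net dissipation, a continuity/bootstrap argument to propagate $\mathcal E_0(\F,\G)\le\mathcal E_0(\F_0,\G_0)$, interpolation between $\dot A^0$ and $\dot A^4$ plus a compactness lemma to obtain the regularity and pass to the limit, the Poincar\'e inequality on zero-mean Wiener functions followed by Gr\"onwall for the decay, and a contraction estimate in $\dot A$ for uniqueness. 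The only cosmetic departures are that the paper uses Simon's compactness result with a uniform $L^1(0,T;\dot A)$ bound on $\partial_t(\F_k,\G_k)$ where you invoke Aubin--Lions with a $L^{4/3}(0,T;\dot A^{-1})$ bound, and in the uniqueness step the paper drops the top-order difference terms directly via the smallness threshold rather than splitting them with Young's inequality as you do; both variants lead to the same conclusion.
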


\begin{remark}
Theorem \ref{theorem1} concerns initial data satisfying a size restriction in the Wiener Algebra $A(\TT)$. Since the restriction is explicit (and $O(1)$) in terms of the parameters of the problem, we say that the initial data are of \emph{medium size}. In particular, the initial datum can be arbitrary large in $H^s(\TT)$, $s>0$.
\end{remark}

\begin{remark}
Notice that a necessary condition for $\sigma_{1,b},$ and $\sigma_{2,b}$ to be positive is that
\[
	\rho_->\rho_+.
\]
Thus, the fluid with higher density is below, which is a reasonable assumption for a gravity driven flow. If capillary forces are included, then $\sigma_{1,A}$ and $\sigma_{2,A}$ can only be positive if both $\gamma_h$ and $\gamma_f$ are strictly positive.
\end{remark}

In the case of a purely gravity driven flow (when surface tension effects are neglected), we can formulate a similar theorem as above.
\begin{theorem}[Two-phase thin film Muskat system without surface tension]\label{theorem2}
Let $\gamma_f,\gamma_h=0$ and $(\bar{f}_0, \bar{g}_0)\in \left(\dot A(\TT)\right)^2$ be the initial datum for \eqref{eq:system3} satisfying
\[
\mathcal{E}_0(\bar{f}_0, \bar{g}_0)<\min\{\langle f_0\rangle,\langle g_0\rangle\},
\qquad \min\{\sigma_{1,b},\sigma_{2,b}\}>0.
\]
Then:
\begin{itemize}
\item[a)] \textbf{Existence:} There exist at least one global weak solution in the sense of Definition \ref{defi1} of \eqref{eq:system3} having the regularity
\begin{align*}
(\F,\G)\in &\Big(L^\infty\left([0,T]\times \TT)\right)\cap L^{2}\left(0,T;W^{1,\infty}(\TT)\right)\cap L^1\left(0,T;C^{1+\alpha}(\TT)\right) \\
&\qquad  \cap \mathcal{M}\left(0,T;W^{2,\infty}(\TT)\right)\cap L^2\left(0,T;\dot H^1(\TT)\right) \Big)^2
\end{align*}
for any $T>0$, where $\alpha \in [0,\frac{1}{2})$.
\item[b)] \textbf{Exponential decay:} The solution satisfies
$$
\|\F(T)\|_{L^\infty}+\|\G(T)\|_{L^\infty}\leq 	\mathcal{E}_0(\F(_0,\G_0)e^{-\delta_b T}.
$$
where $\delta_b=\delta_b(\F_0,\G_0, \rho_{\pm}, \mu_{\pm})>0$ is a certain explicit constants depending on the initial datum and the physical parameters.
\item[c)] \textbf{Uniqueness:}  If 
$$
(\F,\G)\in \left(L^1\left(0,T;\dot{A}^2(\TT)\right)\right)^2,
$$
then the weak solution is unique.
\end{itemize}
\end{theorem}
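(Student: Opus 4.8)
The plan is to follow the same three-step scheme (approximation, uniform a priori estimate in the Wiener algebra, passage to the limit) used for Theorem~\ref{theorem1}, the simplification being that for $\gamma_f=\gamma_h=0$, i.e. $A=A_\gamma=A_\mu=0$, system \eqref{eq:system3} is only of second order,
\[
\partial_t\F=\langle f_0\rangle\bigl(b_\rho\partial_x^2\F+b\,\partial_x^2\G\bigr)+N_{1,b},\qquad \partial_t\G=\langle g_0\rangle b_\mu\bigl(\partial_x^2\F+\partial_x^2\G\bigr)+N_{2,b},
\]
with $N_{1,b},N_{2,b}$ as in \eqref{N1}, \eqref{N3}, and its nonlinearity involves only first order derivatives, so the diffusion controls it. Since $b_\rho-b=\rho_--\rho_+>0$ (which the standing hypothesis $\sigma_{2,b}>0$ forces), the coefficient matrix with rows $(\langle f_0\rangle b_\rho,\langle f_0\rangle b)$ and $(\langle g_0\rangle b_\mu,\langle g_0\rangle b_\mu)$ has positive trace and positive determinant, hence is parabolic. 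I would construct approximate solutions by Fourier (Galerkin) truncation onto $\mathrm{span}\{e^{ikx}:|k|\le N\}$: the truncated system is a locally well-posed ODE, and the a priori bound below precludes finite-time blow-up, so the approximations are global.

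The heart is a uniform estimate in $\mathcal{E}_0$. Passing to Fourier coefficients, using $\tfrac{d}{dt}|\hat u(k)|\le\re\bigl(\overline{\hat u(k)}\,\partial_t\hat u(k)\bigr)/|\hat u(k)|$ and summing over $k\in\ZZ\setminus\{0\}$, the diagonal entries of the linear matrix furnish the dissipation and its off-diagonal entries produce, after adding $|\hat\F(k)|+|\hat\G(k)|$, the corrections $-\langle g_0\rangle b_\mu$ and $-\langle f_0\rangle b$; schematically
\[
\frac{d}{dt}\mathcal{E}_0(\F,\G)\le-\bigl(\langle f_0\rangle b_\rho-\langle g_0\rangle b_\mu\bigr)\|\F\|_{\dot A^2}-\bigl(\langle g_0\rangle b_\mu-\langle f_0\rangle b\bigr)\|\G\|_{\dot A^2}+\|N_{1,b}\|_{\dot A^0}+\|N_{2,b}\|_{\dot A^0}.
\]
For the nonlinear terms, say $N_{1,b}=\partial_x[\F(b_\rho\partial_x\F+b\partial_x\G)]$, I would use $|k|\le|k-m|+|m|$ on the convolution together with the algebra and interpolation bounds of Lemma~\ref{lem:Wiener} (in particular $\|\F\|_{\dot A^1}\le\|\F\|_{\dot A}^{1/2}\|\F\|_{\dot A^2}^{1/2}$) to obtain $\|N_{1,b}\|_{\dot A^0}+\|N_{2,b}\|_{\dot A^0}\le(2b_\rho+2b+4b_\mu)\,\mathcal{E}_0(\F,\G)\bigl(\|\F\|_{\dot A^2}+\|\G\|_{\dot A^2}\bigr)$, the constants being exactly those entering $\sigma_{1,b},\sigma_{2,b}$. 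Since $\mathcal{E}_0(\F,\G)$ is then non-increasing, the smallness assumption $\mathcal{E}_0(\bar f_0,\bar g_0)$ small self-propagates, the nonlinearity is absorbed, and using $\|u\|_{\dot A^2}\ge\|u\|_{\dot A^0}$ for zero-mean $u$ one gets
\[
\frac{d}{dt}\mathcal{E}_0(\F,\G)\le-\sigma_{1,b}\|\F\|_{\dot A^2}-\sigma_{2,b}\|\G\|_{\dot A^2}\le-\delta_b\,\mathcal{E}_0(\F,\G),\qquad\delta_b:=\min\{\sigma_{1,b},\sigma_{2,b}\}>0.
\]
Gr\"onwall gives the exponential decay~(b); moreover $\|\F\|_{L^\infty}+\|\G\|_{L^\infty}\le\mathcal{E}_0(\F,\G)\le\mathcal{E}_0(\bar f_0,\bar g_0)<\min\{\langle f_0\rangle,\langle g_0\rangle\}$, so $f=\F+\langle f_0\rangle>0$ and $g=\G+\langle g_0\rangle>0$ throughout.

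For existence~(a): integrating the differential inequality also bounds $\F,\G$ uniformly in $L^1(0,T;\dot A^2)$; interpolating this with the $L^\infty(0,T;\dot A^0)$ bound via \eqref{interpolation} yields uniform bounds in $L^2(0,T;\dot A^1)$ and in $L^1(0,T;\dot A^{1+\alpha})$ for $\alpha\in[0,\tfrac12)$, which together with the embeddings $\dot A^k\hookrightarrow W^{k,\infty}(\TT)\cap C^k(\TT)$ and $\dot A^1\hookrightarrow\dot H^1$ give precisely the asserted regularity, including $\mathcal{M}(0,T;W^{2,\infty})$ since $L^1(0,T;\dot A^2)\subset L^1(0,T;W^{2,\infty})\subset\mathcal{M}(0,T;W^{2,\infty})$. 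From the equation, $\partial_t\F$ and $\partial_t\G$ inherit a uniform $L^1$-in-time bound with values in a space of negative order, so an Aubin--Lions argument extracts a subsequence converging strongly (e.g. in $L^2(0,T;\dot A^{1-\varepsilon})$), which is strong enough to pass to the limit in the at-most-quadratic, first order nonlinearities; the limit is a weak solution in the sense of Definition~\ref{defi2} and inherits the decay bound.

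Uniqueness~(c): for two solutions in $(L^1(0,T;\dot A^2))^2$ set $\delta\F=\F_1-\F_2$, $\delta\G=\G_1-\G_2$. The difference of the quadratic nonlinearities is bilinear, of the form $\partial_x[\delta\F\,(\cdots\F_1,\G_1)]+\partial_x[\F_2\,(\cdots\delta\F,\delta\G)]$ plus the $\G$-analogue, so estimating $\tfrac{d}{dt}(\|\delta\F\|_{\dot A^0}+\|\delta\G\|_{\dot A^0})$ as in the a priori step, every $\dot A^2$-level term either carries a factor $\|\F_2\|_{\dot A^0}$ or $\|\G_2\|_{\dot A^0}\le\mathcal{E}_0(\bar f_0,\bar g_0)$ and is absorbed into $-\sigma_{1,b}\|\delta\F\|_{\dot A^2}-\sigma_{2,b}\|\delta\G\|_{\dot A^2}$ (there is room because $\sigma_{i,b}>0$ was obtained with the larger constants $2b_\rho+2b+4b_\mu$), or, after Young's inequality and $\|\cdot\|_{\dot A^1}\le\|\cdot\|_{\dot A}^{1/2}\|\cdot\|_{\dot A^2}^{1/2}$, contributes $\varepsilon\bigl(\|\delta\F\|_{\dot A^2}+\|\delta\G\|_{\dot A^2}\bigr)$ (absorbed) plus $\beta(t)\bigl(\|\delta\F\|_{\dot A^0}+\|\delta\G\|_{\dot A^0}\bigr)$ with $\beta(t)\le C\bigl(\|\F_1\|_{\dot A^2}+\|\G_1\|_{\dot A^2}+\|\F_2\|_{\dot A^2}+\|\G_2\|_{\dot A^2}\bigr)\in L^1(0,T)$. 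Gr\"onwall with this $L^1$ integrating factor and $\delta\F(0)=\delta\G(0)=0$ forces $\delta\F\equiv\delta\G\equiv0$. I expect the main obstacle to be the bookkeeping in the a priori estimate — tracking the Fourier convolution constants precisely enough that the nonlinearity is dominated by the diffusion exactly under the explicit threshold, so that $\sigma_{1,b},\sigma_{2,b}$ and not larger non-explicit constants govern the statement — together with selecting the negative-order space for $\partial_t\F,\partial_t\G$ and the interpolation exponent in the Aubin--Lions step so that the convergence is strong enough for the nonlinear terms.
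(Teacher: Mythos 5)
Your proposal matches the paper's proof, which for this theorem simply states that one repeats the argument for Theorem~\ref{theorem1} with the second-order reduction of \eqref{eq:system3} and lands on the differential inequality
\[
\frac{d}{dt}\mathcal{E}_0(\F,\G)\leq -\left[\langle f_0\rangle b_\rho-\langle g_0\rangle b_\mu-\mathcal{E}_0(\F,\G)(2b_\rho+2b+4b_\mu)\right]\|\F\|_{\dot A^2}-\left[\langle g_0\rangle b_\mu-\langle f_0\rangle b-\mathcal{E}_0(\F,\G)(2b_\rho+2b+4b_\mu)\right]\|\G\|_{\dot A^2},
\]
exactly as you derived. You have essentially written out what the paper leaves implicit: the Galerkin scheme, the sharp Leibniz estimate on $\|N_{1,b}\|_{\dot A}$, $\|N_{2,b}\|_{\dot A}$ producing the constant $2b_\rho+2b+4b_\mu$ matching $\sigma_{1,b},\sigma_{2,b}$, the bootstrapping that propagates $\mathcal{E}_0\le\mathcal{E}_0(\F_0,\G_0)$ in time, the interpolation $\|\cdot\|_{\dot A^1}^2\le\|\cdot\|_{\dot A}\|\cdot\|_{\dot A^2}$ together with Simon's compactness result to get the stated regularity and pass to the limit, and the Gr\"onwall argument for uniqueness. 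You also correctly note that the weak formulation relevant here is Definition~\ref{defi2} rather than Definition~\ref{defi1} as typeset in the theorem statement. The only minor imprecision is the claim that $\rho_->\rho_+$ is "forced by $\sigma_{2,b}>0$" alone: it follows from adding the requirements $\langle f_0\rangle b_\rho>\langle g_0\rangle b_\mu$ and $\langle g_0\rangle b_\mu>\langle f_0\rangle b$, i.e. from both $\sigma_{1,b},\sigma_{2,b}>0$; but this side remark plays no role in the argument.
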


\begin{remark}
Related results for the free boundary Muskat problem without the thin film assumption can be found in \cite{ constantin2016muskat, ccgs-10, gancedo2017muskat}.
\end{remark}

If we increase the regularity of the initial data and assume some additional restrictions on $\mathcal{E}_0(\F_0,\G_0)$ we can propagate Sobolev regularity of the solution:
\begin{theorem}[Two-phase thin film Muskat with surface tension -- Sobolev regularity]\label{theorem3}
Let $\gamma_f,\gamma_h>0$  and $(\bar{f}_0, \bar{g}_0)\in \left( \dot{H}^2(\TT)\right)^2$ be the initial datum for \eqref{eq:system3}  satisfying
\[
\mathcal{E}_0(\bar{f}_0, \bar{g}_0)<\min\{\langle f_0\rangle,\langle g_0\rangle\},
\qquad \min\{\sigma_{1,A},\sigma_{2,A},\sigma_{1,b},\sigma_{2,b}\}>0.
\]
If in addition
\begin{align*} 
\begin{split}
\langle g_0 \rangle A_\mu-\frac{\langle f_0 \rangle A+\langle g_0 \rangle A_\mu}{2}-\left(A_\gamma +\frac{13}{4}A+\frac{17}{4}A_\mu\right)\mathcal{E}_0(\F,\G)>0\\
\langle f_0 \rangle A_\gamma-\frac{\langle f_0 \rangle A+\langle g_0 \rangle A_\mu}{2}-\left(A_\gamma +\frac{13}{4}A+\frac{17}{4}A_\mu\right)\mathcal{E}_0(\F_0,\G_0)>0,
\end{split}
\end{align*}
then the global weak solution of \eqref{eq:system3} obtained in Theorem \ref{theorem1} also satisfies for all $T>0$ that
$$
(\F,\G)\in \left(C\left(0,T;\dot{H}^2(\TT)\right)\cap L^2\left(0,T;\dot{H}^4(\TT)\right)\right)^2
$$
with 
\begin{itemize}\setlength\parskip{10pt}
\item[i)]
$
E_2(\F(T),\G(T))+c_1\int_0^T E_4(\F(s),\G(s))ds\leq c_2,
$
\item[ii)] $
E_s(\F(T),\G(T))\leq c_3e^{-c T} \quad \mbox{for all} \quad 0\leq s<2,
$
\end{itemize}
for certain positive constants $c=c(\F_0,\G_0,\rho_{\pm}, \mu_{\pm}, \gamma_f, \gamma_h,s)$, $c_i=c_i(\F_0,\G_0,\rho_{\pm}, \mu_{\pm}, \gamma_f, \gamma_h)$, $i=1,2,3$.
\end{theorem}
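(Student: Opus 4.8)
The statement is a propagation-of-regularity result: the solution $(\F,\G)$ already exists with the regularity of Theorem~\ref{theorem1}, and it suffices to obtain an \emph{a priori} energy estimate in $\dot H^2$ with $\dot H^4$-dissipation, performed first on the approximating sequence used to construct $(\F,\G)$ (where all manipulations are licit) and then passed to the limit by weak and weak-$*$ compactness together with lower semicontinuity of the Sobolev norms. Throughout I will use two consequences of Theorem~\ref{theorem1}: the Wiener energy $\mathcal{E}_0(\F(t),\G(t))$ is non-increasing and decays exponentially, and, since $\F,\G$ are mean-free, $\|\F(t)\|_{L^\infty}+\|\G(t)\|_{L^\infty}\le\mathcal{E}_0(\F(t),\G(t))\le\mathcal{E}_0(\F_0,\G_0)$ (in particular $f,g$ stay positive).

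\textbf{The energy identity and the principal part.} Testing the $\F$-equation of \eqref{eq:system3} against $\pax^4\F$, the $\G$-equation against $\pax^4\G$, integrating over $\TT$ and adding, two integrations by parts give the identity
\[
\tfrac12\tfrac{d}{dt}E_2(\F,\G)+\mathcal{Q}(\F,\G)=\mathcal{L}_b(\F,\G)+\mathcal{N}(\F,\G),
\]
where $\mathcal{Q}(\F,\G)=\langle f_0\rangle A_\gamma\|\pax^4\F\|_{L^2}^2+\langle g_0\rangle A_\mu\|\pax^4\G\|_{L^2}^2+(\langle f_0\rangle A+\langle g_0\rangle A_\mu)\int_\TT\pax^4\F\,\pax^4\G\,dx$ is the principal dissipative form, $\mathcal{L}_b$ gathers the lower-order linear ($b$-)terms, and $\mathcal{N}$ gathers the contributions of $N_{1,A},N_{1,b},N_{2,A},N_{2,b}$. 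Estimating the cross term by $2|\int_\TT\pax^4\F\,\pax^4\G\,dx|\le\|\pax^4\F\|_{L^2}^2+\|\pax^4\G\|_{L^2}^2$ yields
\[
\mathcal{Q}(\F,\G)\ge\Big(\langle f_0\rangle A_\gamma-\tfrac{\langle f_0\rangle A+\langle g_0\rangle A_\mu}{2}\Big)\|\pax^4\F\|_{L^2}^2+\Big(\langle g_0\rangle A_\mu-\tfrac{\langle f_0\rangle A+\langle g_0\rangle A_\mu}{2}\Big)\|\pax^4\G\|_{L^2}^2,
\]
which is exactly the quadratic form whose positivity is assumed in the two additional hypotheses.

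\textbf{Lower-order and nonlinear terms; conclusion.} After one integration by parts the $b$-terms in $\mathcal{L}_b$ are of the form $\int\pax^3(\cdot)\pax^3(\cdot)$, hence controlled by $\varepsilon E_4+C_\varepsilon E_2$ via $\|\cdot\|_{\dot H^3}^2\le\varepsilon\|\cdot\|_{\dot H^4}^2+C_\varepsilon\|\cdot\|_{\dot H^2}^2$. For $\mathcal{N}$ one expands the outer $\pax$ in each $N_{i,\bullet}$: the genuinely top-order pieces have the form $\int\F\,\pax^4\F\,\pax^4\F$, $\int\F\,\pax^4\G\,\pax^4\F$, $\int\G\,\pax^4\F\,\pax^4\G$, $\int\G\,\pax^4\G\,\pax^4\G$, and, using $\|\F\|_{L^\infty}+\|\G\|_{L^\infty}\le\mathcal{E}_0(\F_0,\G_0)$ and Young's inequality on the mixed products with appropriately chosen weights, they combine into at most $(A_\gamma+\tfrac{13}{4}A+\tfrac{17}{4}A_\mu)\,\mathcal{E}_0(\F_0,\G_0)\big(\|\pax^4\F\|_{L^2}^2+\|\pax^4\G\|_{L^2}^2\big)$ — which by the two extra hypotheses is strictly below the gain from $\mathcal{Q}$, leaving a net coefficient $c_1>0$ in front of $E_4$. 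The remaining contributions to $\mathcal{N}$ carry at most three derivatives on every factor; after one further integration by parts and the interpolation inequalities $\|\cdot\|_{\dot H^3}^2\le\|\cdot\|_{\dot H^2}\|\cdot\|_{\dot H^4}$ together with the Wiener/Sobolev bounds of Theorem~\ref{theorem1} for the low-regularity factors (e.g. $\|\pax\F\|_{L^\infty}\lesssim\|\F\|_{L^2}^{1/4}\|\F\|_{\dot H^2}^{3/4}$ with $\|\F\|_{L^2}\le\mathcal{E}_0(\F(t),\G(t))$), they are dominated by $\varepsilon E_4+C\big(\mathcal{E}_0(\F_0,\G_0),\rho_\pm,\mu_\pm,\gamma_f,\gamma_h\big)\,\Phi\big(\mathcal{E}_0(\F(t),\G(t)),E_2(\F,\G)\big)$ with $\Phi$ polynomial and carrying a decaying prefactor. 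Choosing $\varepsilon$ small and using $E_2\le E_4$ for mean-free functions to bury any residual $E_2$, this gives $\tfrac{d}{dt}E_2+c_1E_4\le c\,\mathcal{E}_0(\F(t),\G(t))\,\Psi(E_2)$ with $\mathcal{E}_0(\F(t),\G(t))\le\mathcal{E}_0(\F_0,\G_0)e^{-(\delta_A+\delta_b)t}\in L^1(0,\infty)$; since the medium-size assumption keeps the relevant constants small, a (nonlinear) Grönwall argument yields the uniform bound $E_2(\F(T),\G(T))\le c_2$ and, upon integration, $c_1\int_0^TE_4\le c_2$, i.e.\ (i), whence $(\F,\G)\in L^\infty(0,T;\dot H^2)\cap L^2(0,T;\dot H^4)$. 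Reading $\pat(\F,\G)$ off \eqref{eq:system3} gives $\pat(\F,\G)\in L^2(0,T;\dot H^0)$, so a standard Lions--Magenes/Aubin--Lions argument and the associated energy equality upgrade this to $(\F,\G)\in C(0,T;\dot H^2)$. Finally, for $0\le s<2$, the interpolation $\|u\|_{\dot H^s}\le\|u\|_{\dot H^2}^{s/2}\|u\|_{L^2}^{1-s/2}$ combined with (i) and the exponential $L^2$-decay $\|\F(t)\|_{L^2}+\|\G(t)\|_{L^2}\le\mathcal{E}_0(\F(t),\G(t))\le\mathcal{E}_0(\F_0,\G_0)e^{-(\delta_A+\delta_b)t}$ gives (ii), with rate $c=(\delta_A+\delta_b)(2-s)$.

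\textbf{Main obstacle.} The delicate step is the third one: one must perform the integrations by parts so that no term exceeds the $\dot H^4$ ceiling, and choose the Young weights on the mixed top-order products so that their total coefficient is exactly $(A_\gamma+\tfrac{13}{4}A+\tfrac{17}{4}A_\mu)\mathcal{E}_0(\F_0,\G_0)$, matching the hypotheses; one must also verify that every lower-order nonlinear term closes with a remainder carrying a decaying prefactor (or linear in $E_2$), which is exactly where the medium-size bound on $\mathcal{E}_0$, via Theorem~\ref{theorem1}, prevents finite-time blow-up of the Grönwall estimate.
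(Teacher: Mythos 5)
Your overall scaffolding is the same as the paper's: test the Galerkin system against $\pax^4\F$ and $\pax^4\G$, isolate the principal quadratic form whose coercivity is exactly the extra hypotheses, bury the lower-order terms, and close with Gr\"onwall plus Sobolev interpolation for the decay. Two steps, however, are glossed over in a way that hides a real difficulty and one genuine gap remains.

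First, the claim that the non--top-order contributions to $\mathcal{N}$ ``carry at most three derivatives on every factor'' is false. Expanding $N_{1,A}=-\pax\F\,(A_\gamma\pax^3\F+A\pax^3\G)-\F\,(A_\gamma\pax^4\F+A\pax^4\G)$ and pairing with $\pax^4\F$, the self-interaction $\int\pax\F\,\pax^3\F\,\pax^4\F\,dx=-\tfrac12\int\pax^2\F\,(\pax^3\F)^2\,dx$ does drop to three derivatives, but the cross term
\[
A\int_\TT \pax\F\,\pax^3\G\,\pax^4\F\,dx
\]
(and its mirror in the $\G$-equation) does \emph{not}: integrating by parts merely trades $\pax^4\F$ for $\pax^4\G$ while losing the favourable product structure, so a fourth derivative is unavoidable. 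This is precisely the term the paper singles out as delicate; it is handled with the $L^4$ interpolation $\|\pax u\|_{L^4}^2\le 3\|u\|_{L^\infty}\|u\|_{\dot H^2}$, $\|\pax^3 u\|_{L^4}^2\le 3\|\pax^2 u\|_{L^\infty}\|u\|_{\dot H^4}$, the Kolmogorov--Landau inequality $\|\pax^2 u\|_{L^\infty}\le 4\|u\|_{L^\infty}^{1/2}\|\pax^4 u\|_{L^\infty}^{1/2}$, and then a carefully weighted Young inequality. Your sketch omits this entirely; as written, the argument would not close, because the term you have left uncontrolled carries a full $\dot H^4$ weight without the $\mathcal{E}_0$-prefactor you need to absorb it into the dissipation.

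Second, the Gr\"onwall step is mischaracterised. The paper arranges the final differential inequality in the form $\tfrac{d}{dt}E_2+c_1E_4\le c\,(1+E_2)\,\mathscr{E}_4$, which is \emph{linear} in $E_2$ with a time-integrable coefficient $\mathscr{E}_4(\F,\G)\in L^1(0,\infty)$; a direct (linear) Gr\"onwall then gives a global bound on $E_2$ with no restriction whatsoever on the size of the initial datum in $\dot H^2$ --- exactly what the theorem advertises. Your formulation with a polynomial $\Psi$ and a ``nonlinear Gr\"onwall'' is incompatible with that assertion: if $\Psi$ were genuinely superlinear, integrability of $\mathcal{E}_0(\F(t),\G(t))$ alone would not prevent finite-time blow-up for large $E_2(0)$, and a smallness condition on the initial $\dot H^2$-norm would resurface. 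You must therefore verify that after the Young inequalities your residual term really is of the form $c\,E_2\cdot\mathscr{E}_4$ (not $E_2^p$ for $p>1$), which in turn forces you to track the cross term described above, because that is exactly where the $\mathscr{E}_4$-factor is generated.
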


Let us remark that in Theorem \ref{theorem3} there are no size restrictions on the initial datum $(\F_0,\G_0)$ in $H^2(\TT)$.

\begin{remark}
For the hypotheses of Theorem \ref{theorem3} to fulfill, the physical parameters and the initial datum $(f_0,g_0)$ have to satisfy
$$
\langle g_0 \rangle A_\mu-\langle f_0 \rangle A>0,
$$
$$
\langle f_0 \rangle A_\gamma-\langle g_0 \rangle A_\mu>0.
$$ 
If both fluids have the same viscosity, the above condition requires that
\[
 \frac{\gamma_f+\gamma_h}{\gamma_h}\langle f_0 \rangle>	\langle g_0 \rangle > \langle f_0 \rangle. 
\]
\end{remark}

Analogously to Theorem \ref{theorem3}, the following result for the gravity driven two-phase thin film Muskat holds true:
\begin{theorem}[Two-phase thin film Muskat without surface tension -- Sobolev regularity]\label{theorem4}
Let $\gamma_f,\gamma_h=0$  and $(\bar{f}_0, \bar{g}_0)\in \left(\dot{H}^1(\TT)\right)^2$ be the initial datum for \eqref{eq:system3} such that
\[
\mathcal{E}_0(\bar{f}_0, \bar{g}_0)<\min\{\langle f_0\rangle,\langle g_0\rangle\},
\qquad \min\{\sigma_{1,b},\sigma_{2,b}\}>0.
\]
If in addition
\begin{align*} 
\begin{split}
\langle f_0 \rangle b_\rho-\frac{1}{2}\left(\langle g_0\rangle b_\mu+\langle f_0 \rangle b\right)-\left(b_\rho+b_\mu+\frac{5b_\mu}{2}+\frac{5b}{2}\right)\mathcal{E}_0(\F_0,\G_0)&>0,\\
\langle g_0\rangle b_\mu-\frac{1}{2}\left(\langle g_0\rangle b_\mu+\langle f_0 \rangle b\right)-\left(b_\rho+b_\mu+\frac{5b_\mu}{2}+\frac{5b}{2}\right)\mathcal{E}_0(\F_0,\G_0)&>0,
\end{split}
\end{align*}

then the global weak solution of \eqref{eq:system3} obtained in Theorem \ref{theorem2} also satisfies for all $T>0$ that
$$
(\F,\G)\in \left(C\left(0,T;\dot{H}^1(\TT)\right)\cap L^2\left(0,T;\dot{H}^2(\TT)\right)\right)^2
$$
with 
\begin{itemize}\setlength\parskip{10pt}
\item[i)]
$
E_1(\F(T),\G(T))+c_1\int_0^T E_2(\F(s),\G(s))ds\leq c_2,
$ 
\item[ii)] $
E_s(\F(T),\G(T))\leq c_3e^{-c T} \quad \mbox{for all} \quad 0\leq s<1,
$
\end{itemize}
for certain positive constants $c=c(\F_0,\G_0,\rho_{\pm}, \mu_{\pm},s)$, $c_i=c_i(\F_0,\G_0,\rho_{\pm}, \mu_{\pm})$, $i=1,2,3$.
\end{theorem}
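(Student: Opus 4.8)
The plan is to propagate the $\dot H^1$ bound by an energy estimate carried out on the smooth approximate solutions used to construct the weak solution of Theorem~\ref{theorem2}, and then to pass to the limit; since every manipulation below is an inequality between norms, it survives that limiting procedure. From the construction in Theorem~\ref{theorem2} I retain two facts: the layers stay positive, $\langle f_0\rangle+\F>0$ and $\langle g_0\rangle+\G>0$, and the Wiener norm is non-increasing, $\mathcal{E}_0(\F(t),\G(t))\le\mathcal{E}_0(\F_0,\G_0)$ for all $t\ge0$; in particular $\|\F(t)\|_{L^\infty}+\|\G(t)\|_{L^\infty}\le\mathcal{E}_0(\F_0,\G_0)$ for all $t$, by the embedding $\dot A(\TT)\hookrightarrow L^\infty(\TT)$.

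First I would write the gravity driven system (i.e.\ \eqref{eq:system2} with $A_\gamma=A=A_\mu=0$) in conservative form,
\[
\pat\F=\pax\big[(\langle f_0\rangle+\F)(b_\rho\pax\F+b\pax\G)\big],\qquad
\pat\G=\pax\big[(\langle g_0\rangle+\G)b_\mu(\pax\F+\pax\G)\big],
\]
test the first equation with $-\pax^2\F$, the second with $-\pax^2\G$, add them and integrate by parts. The terms carrying the factors $\langle f_0\rangle$, $\langle g_0\rangle$ produce the quadratic form
\[
-\langle f_0\rangle b_\rho\|\pax^2\F\|_{L^2}^2-\langle g_0\rangle b_\mu\|\pax^2\G\|_{L^2}^2-(\langle f_0\rangle b+\langle g_0\rangle b_\mu)\int_\TT\pax^2\F\,\pax^2\G\,dx,
\]
which, after Young's inequality on the cross term, is bounded above by $-\sigma_1'\|\pax^2\F\|_{L^2}^2-\sigma_2'\|\pax^2\G\|_{L^2}^2$ with $\sigma_1':=\langle f_0\rangle b_\rho-\tfrac12(\langle g_0\rangle b_\mu+\langle f_0\rangle b)$ and $\sigma_2':=\langle g_0\rangle b_\mu-\tfrac12(\langle g_0\rangle b_\mu+\langle f_0\rangle b)$ — precisely the quantities the hypothesis forces to be positive.

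The heart of the matter, and the step I expect to be the main obstacle, is the treatment of the remaining terms, each of which carries a factor $\F$ or $\G$. After one further integration by parts, and using the cancellations available on $\TT$ for zero mean functions (for instance $\int_\TT(\pax\F)^2\pax^2\F\,dx=0$), these reduce to expressions of the types $\int_\TT\F(\pax^2\F)^2$, $\int_\TT\F\,\pax^2\F\,\pax^2\G$ and $\int_\TT(\pax\F)^2\pax^2\G$ (and their $\F\leftrightarrow\G$ analogues), with coefficients built from $b_\rho,b,b_\mu$. The first two are controlled at once by $\|\F\|_{L^\infty}\big(\|\pax^2\F\|_{L^2}^2+\|\pax^2\G\|_{L^2}^2\big)$; for the genuinely dangerous third type, a crude Gagliardo--Nirenberg bound of $\|\pax\F\|_{L^4}^2$ would leave an uncontrolled power of $E_1$, so instead one uses the ``parabolic'' interpolation on $\TT$, $\|\pax\F\|_{L^4}^2\le 3\|\F\|_{L^\infty}\|\pax^2\F\|_{L^2}$ (one integration by parts), which again extracts a factor $\|\F\|_{L^\infty}$. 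Hence the full nonlinear contribution is bounded by $c\,\mathcal{E}_0(\F_0,\G_0)\big(\|\pax^2\F\|_{L^2}^2+\|\pax^2\G\|_{L^2}^2\big)$ with an explicit constant $c$ which, after collecting the coefficients, equals $b_\rho+b_\mu+\tfrac{5b_\mu}{2}+\tfrac{5b}{2}$. Combined with the previous paragraph, the two extra smallness conditions in the statement say exactly that $c_1:=\min\{\sigma_1',\sigma_2'\}-\big(b_\rho+b_\mu+\tfrac{5b_\mu}{2}+\tfrac{5b}{2}\big)\mathcal{E}_0(\F_0,\G_0)>0$, whence
\[
\tfrac{d}{dt}E_1(\F,\G)+2c_1\,E_2(\F,\G)\le0.
\]

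Integrating this differential inequality in time gives part i) with $c_2:=E_1(\F_0,\G_0)$ — note that no size restriction on $E_1(\F_0,\G_0)$ enters, only on $\mathcal{E}_0(\F_0,\G_0)$ — and hence $(\F,\G)\in L^\infty(0,T;\dot H^1)\cap L^2(0,T;\dot H^2)$; the continuity $(\F,\G)\in C(0,T;\dot H^1)$ then follows in the usual way from this uniform bound together with the control of $\pat(\F,\G)$ in a Sobolev space of negative order supplied by the equations. For part ii), the Poincaré inequality for zero mean functions gives $E_2\ge E_1$, hence $E_1(\F(T),\G(T))\le E_1(\F_0,\G_0)e^{-2c_1T}$; interpolating $E_s\le E_0^{\,1-s}E_1^{\,s}$ for $0\le s<1$ and using that $E_0$ decays exponentially — which is contained in the $L^\infty$ decay of Theorem~\ref{theorem2} via $\|u\|_{L^2}\le\sqrt{2\pi}\,\|u\|_{L^\infty}$, or follows from the analogous $L^2$ energy estimate — we obtain $E_s(\F(T),\G(T))\le c_3e^{-cT}$ with $c=c(s)>0$. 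The remaining work, namely setting up the estimate on the regularized problem and passing to the limit, is routine within the approximation scheme already used for Theorem~\ref{theorem2}.
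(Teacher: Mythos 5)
Your proposal is correct and follows essentially the same route as the paper: test the gravity-driven system against $-\partial_x^2\F$ and $-\partial_x^2\G$, integrate by parts so the linear part yields the quadratic form in $\|\partial_x^2\F\|_{L^2},\|\partial_x^2\G\|_{L^2}$ (handled with Young's inequality to produce the $\sigma_1',\sigma_2'$ thresholds), and control the nonlinear remainders through the $L^\infty$ bound from $\dot A(\TT)$ together with $\int_\TT(\partial_x\F)^2\partial_x^2\F\,dx=0$ and $\|\partial_x h\|_{L^4}^2\le 3\|h\|_{L^\infty}\|\partial_x^2 h\|_{L^2}$, recovering exactly the constant $b_\rho+b_\mu+\tfrac{5b_\mu}{2}+\tfrac{5b}{2}$ that appears in the hypothesis. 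The concluding Gronwall/Poincaré argument and the Galerkin passage to the limit are likewise the ones the paper uses.
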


\begin{remark}
Theorems \ref{theorem1}, \ref{theorem2}, \ref{theorem3} and \ref{theorem4} can also be stated in terms of the solutions to \eqref{eq:system1}. On the one hand, Theorem \ref{theorem1} and Theorem \ref{theorem2} show the global existence of positive solutions for $(f_0,g_0)\in \left( A(\TT)\right)^2$ of \eqref{eq:system1} and its uniform convergence towards $(\langle f_0\rangle,\langle g_0\rangle)$. On the other hand, Theorems \ref{theorem3} and \ref{theorem4} prove that the solution propagates Sobolev regularity if further (explicit) smallness conditions on the (weak) norm  of the initial data in $ A(\TT)$ are assumed.  
\end{remark}

\subsection{The thin film Stokes problem}
Our definition of a weak solution for the thin film Stokes problem \eqref{eq:system_Stokesfinal2} is given by:
\begin{definition}\label{defi3}Define $\zeta=1$ if $\mathscr{D}=\pax$ (gravity driven flow) and $\zeta=3$ if $\mathscr{D}=-\pax^3$ (capillary driven flow). We say that $(\F,\G)\in L^1\left(0,T;W^{\zeta,1}(\TT)\right)$ is a weak solution to \eqref{eq:system_Stokesfinal2} corresponding to the initial datum $(\F_0,\G_0)$ if and only if
\begin{align*}
\int_\TT &\F_0\phi(0) dx+\int_0^T\int_\TT \F\partial_t\phi dtdx =\int_0^T\int_\TT\bigg{[}\left(2\rho \langle f_0 \rangle^3 +3\langle f_0 \rangle^2\langle g_0 \rangle\right)\mathscr{D}\F\bigg{]}\pax\phi dxdt\\
&+ \int_0^T\int_\TT\Big{[}(2\langle f_0 \rangle^3+3\langle f_0 \rangle^2\langle g_0 \rangle)\mathscr{D}\G\Big{]}\pax\phi dxdt\\
&+ \int_0^T\int_\TT\Big[3(\F^2 \G + 2 \F \G \langle f_0 \rangle + \F^2 \langle g_0 \rangle + \G \langle f_0 \rangle^2 +2\F \langle f_0 \rangle \langle g_0 \rangle) \\
&\qquad\qquad +  2\rho (\F^3 + 3 \F^2 \langle f_0\rangle + 3 \F \langle f_0 \rangle^2))\mathscr{D} \F \Big]\partial_x \phi\, dxdt\\
&+\int_0^T\int_\TT \left[ 3(\F^2 \G + 2 \F \G \langle f_0 \rangle + \F^2 \langle g_0 \rangle + \G \langle f_0 \rangle^2 +2\F \langle f_0 \rangle \langle g_0 \rangle)\mathscr{D} \F \right]\pax\phi dxdt, 
\end{align*}
and
\begin{align*}
\int_\TT &\G_0\psi(0) dx+\int_0^T\int_\TT \G\partial_t\psi dtdx =\int_0^T\int_\TT\Big{[}\left(2\mu \langle g_0 \rangle^3+3\rho \langle f_0 \rangle^2\langle g_0 \rangle+6\langle f_0 \rangle \langle g_0 \rangle^2\right) \mathscr{D} \F\Big{]}\pax\psi dxdt\\
&+ \int_0^T\int_\TT\Big{[}\left(2\mu \langle g_0 \rangle^3 + 3 \langle f_0 \rangle^2\langle g_0 \rangle + 6\langle f_0 \rangle \langle g_0 \rangle^2 \right) \mathscr{D} \G\Big{]}\pax\phi dxdt\\
&+ \int_0^T\int_\TT\bigg{[}\Big{(}2\mu (\G^3 + 3\G^2 \langle g_0 \rangle +3 \G \langle g_0\rangle^2) + 3\rho(\F^2 \G + 2 \F \G \langle f_0 \rangle + \F^2 \langle g_0 \rangle + \G \langle f_0 \rangle^2 +2\F \langle f_0 \rangle \langle g_0 \rangle) \\
&\qquad \qquad + 6(\G^2 \F + 2 \G \F \langle g_0 \rangle + \G^2 \langle f_0 \rangle + \F \langle g_0 \rangle^2 +2\G \langle g_0 \rangle \langle f_0 \rangle)\bigg{)}\mathscr{D} \F \Big{]}\pax \psi dxdt\\
&+\int_0^T\int_\TT \Big{[}\Big{(}2\mu (\G^3 + 3\G^2 \langle g_0 \rangle +3 \G \langle g_0\rangle^2) + 3(\F^2 \G + 2 \F \G \langle f_0 \rangle + \F^2 \langle g_0 \rangle + \G \langle f_0 \rangle^2 +2\F \langle f_0 \rangle \langle g_0 \rangle) \\
&\qquad \qquad+ 6(\G^2 \F + 2 \G \F \langle g_0 \rangle + \G^2 \langle f_0 \rangle + \F \langle g_0 \rangle^2 +2\G \langle g_0 \rangle \langle f_0 \rangle)\Big{)}\mathscr{D} \G \Big{]}\pax\phi dxdt, 
\end{align*} for all $(\phi,\psi)\in C^{\infty}_c([0,T)\times \TT)$.
\end{definition}

Let us define the following constants:

\begin{align*} 
\begin{split}
\Sigma_1&=2\rho \langle f_0 \rangle^3 +3\langle f_0 \rangle^2\langle g_0 \rangle(1-\rho)-\left(2\mu \langle g_0 \rangle^3+6\langle f_0 \rangle \langle g_0 \rangle^2\right)\\
&-\mathcal{E}_0(\F_0,\G_0)\bigg{[}({78}+20\rho+14\mu)\mathcal{E}_0(\F_0,\G_0)^2+(\langle f_0 \rangle(36\rho+84)+({81}+30\mu+{9}\rho)\langle g_0 \rangle)\mathcal{E}_0(\F_0,\G_0)\bigg{]}\\
&-\mathcal{E}_0(\F_0,\G_0)\bigg{[}(18\mu+{18}) \langle g_0\rangle^2 + ({18\rho}+ {18})\langle f_0 \rangle^2 +({12\rho}+{60}) \langle g_0 \rangle \langle f_0 \rangle\bigg{]},
\end{split}
\end{align*}
\begin{align*} 
\begin{split}
\Sigma_2&=2\mu \langle g_0 \rangle^3  + 6\langle f_0 \rangle \langle g_0 \rangle^2-2\langle f_0 \rangle^3 \\
&-\mathcal{E}_0(\F_0,\G_0)\bigg{[}(14\mu+15\rho+83)\mathcal{E}_0(\F_0,\G_0)^2+\left((30\mu+6\rho+{84})\langle g_0 \rangle+(96+24\rho)\langle f_0 \rangle\right)\mathcal{E}_0(\F_0,\G_0)\bigg{]}\\
&-\mathcal{E}_0(\F_0,\G_0)\bigg{[}(18\mu+18) \langle g_0\rangle^2 + (27+9\rho)\langle f_0 \rangle^2 +({6\rho}+{66}) \langle f_0 \rangle \langle g_0 \rangle\bigg{]}\,.
\end{split}
\end{align*}

Then, our main result for the thin film Stokes equations reads as follows.
\begin{theorem}[Two-phase thin film Stokes system]\label{theorem5} Let $(\bar{f}_0, \bar{g}_0)\in \left(\dot A(\TT)\right)^2$ be the initial datum for \eqref{eq:system_Stokesfinal2} such that
$$
\mathcal{E}_0(\bar{f}_0, \bar{g}_0)<\min\{\langle f_0\rangle,\langle g_0\rangle\}.
$$
Define $\zeta=1$ if $\mathscr{D}=\pax$ (gravity driven flow) and $\zeta=3$ if $\mathscr{D}=-\pax^3$ (capillary driven flow). Assume that 
$$
\min\{\Sigma_1,\Sigma_2\}>0.
$$
Then:
\begin{itemize}
\item[a)] \textbf{Existence:} There exist at least one global weak solution in the sense of Definition \ref{defi3} of \eqref{eq:system_Stokesfinal2} having the regularity
\begin{align*}
(\F,\G)\in &\Big(L^\infty\left([0,T]\times \TT\right)\cap L^{\frac{\zeta+1}{\zeta}}\left(0,T;W^{\zeta,\infty}(\TT)\right)\cap L^1\left(0,T;C^{{\zeta}+\alpha}(\TT)\right) \\
&\qquad \mathcal{M}\left(0,T;W^{\zeta+1,\infty}(\TT)\right)\cap L^2\left(0,T;\dot{H}^{(\zeta+1)/2}(\TT)\right)\Big)^2
\end{align*}
for any $T>0$, where $\alpha\in [0,\frac{1}{2})$.
\item[b)] \textbf{Exponential decay:} The solution satisfies
$$
\|\F(T)\|_{L^\infty}+\|\G(T)\|_{L^\infty}\leq 	\mathcal{E}_0(\F_0,\G_0)e^{-\varepsilon T},
$$
where $\varepsilon=\varepsilon(\F_0,\G_0, \rho_{\pm}, \mu_{\pm},\zeta)>0$ is a certain explicit constant depending on the initial datum and the physical parameters.
\item[c)] \textbf{Uniqueness:}
If 
$$
(\bar{f}_0, \bar{g}_0)\in \left(L^1\left(0,T;\dot{A}^{\zeta+1}(\TT)\right)\right)^2,
$$
then, the weak solution is unique.
\end{itemize}
\end{theorem}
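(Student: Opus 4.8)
The plan is to follow the blueprint of the proofs of Theorems~\ref{theorem1}--\ref{theorem4}: the fourth-order (resp.\ second-order) diagonal part of \eqref{eq:system_Stokesfinal2} supplies the dissipation, and the cubic cross-diffusion and nonlinear terms are treated as perturbations controlled in the Wiener algebra. \emph{Step 1 (the basic a priori estimate in $\dot A(\TT)$).} Passing to the Fourier side, the diagonal part of the first equation of \eqref{eq:system_Stokesfinal2} contributes $-(2\rho\langle f_0\rangle^3+3\langle f_0\rangle^2\langle g_0\rangle)|k|^{\zeta+1}\widehat{\F}(k)$ to $\pat\widehat{\F}(k)$, and analogously for $\G$; the off-diagonal linear part and the contributions of $N_1,\dots,N_4$ are, after using $\tfrac{d}{dt}|\widehat{\F}(k)|\le\mathrm{Re}\big(\overline{\mathrm{sgn}\,\widehat{\F}(k)}\,\pat\widehat{\F}(k)\big)$ and summing in $k$, bounded by products which we estimate with the Banach-algebra inequality \eqref{eq:L21} of Lemma~\ref{lem:Wiener}, always placing the full $\dot A^{\zeta+1}$-derivative on one factor and measuring the rest in $\dot A(\TT)$. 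Expanding every quadratic and cubic monomial produced by the zero-mean substitution and collecting coefficients, one reaches — as long as $\mathcal{E}_0(\F(t),\G(t))\le\mathcal{E}_0(\F_0,\G_0)$ — the differential inequality
\[
\frac{d}{dt}\mathcal{E}_0(\F,\G)\ \le\ -\,\Sigma_1\,\|\F\|_{\dot A^{\zeta+1}}\ -\ \Sigma_2\,\|\G\|_{\dot A^{\zeta+1}},
\]
because $\Sigma_1,\Sigma_2$ are defined to be exactly the diagonal dissipation coefficients minus all contributions of the off-diagonal coupling and of the nonlinear monomials (the latter estimated with the ``small'' factor bounded by $\mathcal{E}_0(\F_0,\G_0)$). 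Since $\|u\|_{\dot A^{\zeta+1}}\ge\|u\|_{\dot A^0}$ for zero-mean $u$ on $\TT$, the hypothesis $\min\{\Sigma_1,\Sigma_2\}>0$ gives $\tfrac{d}{dt}\mathcal{E}_0(\F,\G)\le-\min\{\Sigma_1,\Sigma_2\}\mathcal{E}_0(\F,\G)$; a continuity/bootstrap argument then propagates $\mathcal{E}_0(\F(t),\G(t))\le\mathcal{E}_0(\F_0,\G_0)$ for all $t$ and yields $\mathcal{E}_0(\F(t),\G(t))\le\mathcal{E}_0(\F_0,\G_0)e^{-\min\{\Sigma_1,\Sigma_2\}t}$. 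As $\|\F\|_{L^\infty}+\|\G\|_{L^\infty}\le\mathcal{E}_0(\F,\G)$, this already proves part~(b) with an explicit $\varepsilon$, and also gives $f=\F+\langle f_0\rangle>0$, $g=\G+\langle g_0\rangle>0$.

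\emph{Steps 2--3 (construction and passage to the limit).} Regularize by Fourier--Galerkin truncation, projecting \eqref{eq:system_Stokesfinal2} onto $\mathrm{span}\{e^{ikx}:|k|\le n\}$; the resulting ODE system is locally solvable and the Step~1 computation applies with only notational changes (the projection does not increase any $\dot A^s$-norm), so the truncated solutions $(\F^n,\G^n)$ are global and satisfy, uniformly in $n$, $\sup_{[0,T]}\mathcal{E}_0(\F^n,\G^n)\le\mathcal{E}_0(\F_0,\G_0)$ and $\int_0^T(\|\F^n\|_{\dot A^{\zeta+1}}+\|\G^n\|_{\dot A^{\zeta+1}})\,dt\le C$. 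Interpolating with \eqref{interpolation} converts these into the uniform bounds in $L^\infty([0,T]\times\TT)$, in $L^{(\zeta+1)/\zeta}(0,T;W^{\zeta,\infty}(\TT))$ (via $\|\F^n\|_{\dot A^\zeta}^{(\zeta+1)/\zeta}\le\|\F^n\|_{\dot A^0}^{1/\zeta}\|\F^n\|_{\dot A^{\zeta+1}}$), in $L^2(0,T;\dot H^{(\zeta+1)/2}(\TT))$ (via $\|\F^n\|_{\dot H^{(\zeta+1)/2}}^2\le\|\F^n\|_{\dot A^0}\|\F^n\|_{\dot A^{\zeta+1}}$) and in $\mathcal{M}(0,T;W^{\zeta+1,\infty}(\TT))$, while the $L^1(0,T;C^{\zeta+\alpha}(\TT))$ bound with $\alpha\in[0,\tfrac12)$ comes from interpolation together with the embedding $A^{\zeta+\alpha}(\TT)\hookrightarrow C^{\zeta+\alpha}(\TT)$. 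From the equation $\pat\F^n,\pat\G^n$ are bounded in $L^2(0,T;H^{-\zeta-1}(\TT))$, so Aubin--Lions--Simon yields strong convergence of a subsequence in $L^2(0,T;\dot H^\sigma(\TT))$ for every $\sigma<(\zeta+1)/2$, hence a.e.; together with the $L^\infty$ bound and dominated convergence this allows passage to the limit in every term of the weak formulation of Definition~\ref{defi3}, and lower semicontinuity transfers all the above bounds to the limit $(\F,\G)$, proving~(a).

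\emph{Step 4 (uniqueness).} Given two weak solutions $(\F_i,\G_i)$, $i=1,2$, with the extra integrability $(\F_i,\G_i)\in L^1(0,T;\dot A^{\zeta+1}(\TT))$, the differences $\delta\F=\F_1-\F_2$, $\delta\G=\G_1-\G_2$ solve a system of the same structure whose nonlinear right-hand side is a sum of products, each containing at least one factor from $\{\delta\F,\delta\G\}$. Running the Step~1 estimate on $\mathcal{E}_0(\delta\F,\delta\G)$ and using \eqref{eq:L21}, the monomials in which the $\dot A^{\zeta+1}$-derivative falls on $\delta\F$ or $\delta\G$ carry a coefficient bounded by $\mathcal{E}_0(\F_0,\G_0)$ and the means, and are absorbed into the diagonal dissipation exactly as before; the remaining monomials produce the $L^1_t$-coefficient $m(t):=\sum_i\big(\|\F_i\|_{\dot A^{\zeta+1}}+\|\G_i\|_{\dot A^{\zeta+1}}\big)$, giving $\tfrac{d}{dt}\mathcal{E}_0(\delta\F,\delta\G)\le C\,m(t)\,\mathcal{E}_0(\delta\F,\delta\G)$; Grönwall and $\delta\F(0)=\delta\G(0)=0$ force $\delta\F\equiv\delta\G\equiv0$.

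\emph{Main obstacle.} Conceptually the argument is identical to that of Theorems~\ref{theorem1}--\ref{theorem4}, and essentially all the genuine work lies in Step~1: one must expand the (numerous) quadratic and cubic monomials hidden in $N_1,\dots,N_4$ after the zero-mean substitution, assign to each the sharp \eqref{eq:L21}-estimate with the correct factor carrying the top-order derivative, and verify that the sum of the dissipative contributions is precisely $\Sigma_1,\Sigma_2$ — so that $\Sigma_1,\Sigma_2>0$ is exactly the hypothesis under which the estimate closes. Matching all the explicit numerical constants, and checking that a single computation covers both $\mathscr{D}=\pax$ ($\zeta=1$, symbol $|k|^2$) and $\mathscr{D}=-\pax^3$ ($\zeta=3$, symbol $|k|^4$), is the delicate and error-prone part; the surrounding machinery (Galerkin truncation, interpolation, compactness, Grönwall) is routine.
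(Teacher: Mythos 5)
Your proposal follows essentially the same route as the paper's proof of Theorem~\ref{theorem5}: derive a Wiener-algebra energy estimate of the form $\tfrac{d}{dt}\mathcal{E}_0(\F,\G)\le -\Sigma_1\|\F\|_{\dot A^{\zeta+1}}-\Sigma_2\|\G\|_{\dot A^{\zeta+1}}$ by placing the $\dot A^{\zeta+1}$-derivative on one factor in each monomial of $N_1,\dots,N_4$ via \eqref{eq:L21}/\eqref{interpolation}, and then run the Galerkin/compactness/Gr\"onwall machinery of Section~\ref{section4} unchanged; this is exactly what Section~8 does. Two remarks. First, you defer the only nontrivial part — expanding the cubic and quartic monomials and checking that the resulting coefficients really reduce to the stated $\Sigma_1,\Sigma_2$ under the smallness hypothesis — so your writeup establishes the \emph{structure} of the argument but not its content (this bookkeeping is the whole of Section~8). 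Second, your claimed bound $\pat\F^n,\pat\G^n\in L^2\left(0,T;H^{-\zeta-1}(\TT)\right)$ is not justified: multiplication by the $L^\infty$ coefficients $A(\F^n,\G^n)$ is not bounded on $H^{-\zeta}$, so this step does not follow from the bounds you have. The paper instead obtains $\pat\F^n,\pat\G^n\in L^1\left(0,T;\dot A(\TT)\right)$ directly from the Wiener-algebra estimates (cf.\ Lemma~\ref{lem:EE} and \eqref{eq:BT}), which is the bound fed into \cite[Corollary 4]{simon1986compact}; you should replace your $L^2\left(0,T;H^{-\zeta-1}\right)$ claim by that.
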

We define the following constants:
\begin{align*}
\mathscr{C}_1&:=\frac{1}{2}{\mathcal{E}}_0(\F_0,\G_0)^2\left(23+5\rho+4\mu \right)+\frac{1}{2}{\mathcal{E}}_0(\F_0,\G_0)\left( \langle f_0 \rangle (36+12\rho)+ \langle g_0 \rangle (33+3\rho+4\mu)\right)\\
&\qquad  +\frac{1}{2}\left( \langle f_0 \rangle^2(15+9\rho)+ \langle g_0 \rangle^2(12+12\mu)+ \langle f_0 \rangle \langle g_0 \rangle (42+6\rho) \right),\\
\mathscr{C}_{3}&=\mathcal{E}_0(\F_0,\G_0)^2\left (32+\frac{55}{8}\rho +\frac{15}{2}\mu \right)+\mathcal{E}_0(\F_0,\G_0) \left(\langle f_0 \rangle \left(66+18 \rho \right) + \langle g_0 \rangle \left(36+\frac{21}{2}\rho+18\mu \right)\right)\\
&\qquad +\left( \langle f_0 \rangle^2 \left(\frac{57}{4}+\frac{39}{4}\rho \right) +\langle g_0 \rangle^2 \left(\frac{27}{2}+\frac{15}{2}\mu \right) + \langle f_0 \rangle\langle g_0 \rangle \left( \frac{81}{2}+\frac{15}{2}\rho\right)   \right).
\end{align*}

\begin{theorem}[Two-phase thin film Stokes system -- Sobolev regularity]\label{theorem6}
Let the initial datum $(\bar{f}_0, \bar{g}_0)\in \left(\dot H^{(\zeta+1)/2}(\TT)\right)^2$  for \eqref{eq:system_Stokesfinal2} be such that
\[
\mathcal{E}_0(\bar{f}_0, \bar{g}_0)<\min\{\langle f_0\rangle,\langle g_0\rangle\}.
\]
Define $\zeta=1$ if $\mathscr{D}=\pax$ (gravity driven flow) and $\zeta=3$ if $\mathscr{D}=-\pax^3$ (capillary driven flow). Assume that 
$$
\min\{\Sigma_1,\Sigma_2\}>0 
$$
and 
\begin{align*}
(2\rho-1) \langle f_0 \rangle^3 - \frac{3}{2}(\rho-1) \langle f_0 \rangle^2 \langle g_0 \rangle - 3\langle f_0 \rangle \langle g_0 \rangle^2-\mu \langle g_0 \rangle^3 -{\mathcal{E}}_0(\F_0,\G_0)\mathscr{C}_\zeta &>0,\\
\mu \langle g_0 \rangle^3+3\langle f_0 \rangle \langle g_0 \rangle^2 -\frac{3}{2}(\rho-1)\langle f_0 \rangle^2 \langle g_0 \rangle - \langle f_0\rangle^3 -{\mathcal{E}}_0(\F_0,\G_0)\mathscr{C}_\zeta&>0.
\end{align*}
Then, the global weak solution of \eqref{eq:system_Stokesfinal2} obtained in Theorem \ref{theorem5} also satisfies for all $T>0$ that
$$
(\F,\G)\in \left(C\left(0,T;\dot{H}^{(\zeta+1)/2}(\TT)\right)\cap L^2\left(0,T;\dot{H}^{\zeta+1}(\TT)\right)\right)^2
$$
with 
\begin{itemize}\setlength\parskip{10pt}
\item[i)]
$
E_{(\zeta+1)/2}(\F(T),\G(T))+c_1\int_0^T E_{\zeta+1}(\F(s),\G(s))ds\leq c_2,
$
\item[ii)] $
E_s(\F(t),\G(t))\leq c_3e^{-c T}$\quad for all \quad $0\leq s<(\zeta+1)/2,$
\end{itemize}
for certain positive constants $c=c(\F_0,\G_0,\mu,\rho,\zeta,s)$, $c_i=c_i(\F_0,\G_0,\mu,\rho,\zeta)$, $i=1,2,3$.
\end{theorem}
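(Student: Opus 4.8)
The plan is to follow the scheme underlying Theorems~\ref{theorem3} and~\ref{theorem4}: starting from the global weak solution produced in Theorem~\ref{theorem5}, I would run an energy estimate at the regularity level $\dot H^{(\zeta+1)/2}(\TT)$, which is exactly ``half'' of the parabolic dissipation level $\dot H^{\zeta+1}(\TT)$ associated with the leading operator $\pax\mathscr D$ (equal to $\pax^2$ when $\zeta=1$ and to $-\pax^4$ when $\zeta=3$). All estimates below would be performed on the approximating sequence used to construct the solution in Theorem~\ref{theorem5}, with bounds uniform in the approximation parameter, and then passed to the limit using the compactness already available in that proof. Two facts from Theorem~\ref{theorem5} would be used throughout: that $m(t):=\mathcal E_0(\F(t),\G(t))$ is non-increasing, decays exponentially and hence lies in $L^1(0,\infty)$ with $m(t)\le\mathcal E_0(\F_0,\G_0)$ for all $t$; and that $(\F,\G)\in\big(L^2(0,T;\dot H^{(\zeta+1)/2}(\TT))\big)^2$.

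\emph{Linear part.} Pairing the $\F$-- and $\G$--equations of \eqref{eq:system_Stokesfinal2} with $(-\pax^2)^{(\zeta+1)/2}\F$ and $(-\pax^2)^{(\zeta+1)/2}\G$ respectively and adding, one gets an identity $\tfrac12\tfrac{d}{dt}E_{(\zeta+1)/2}(\F,\G)=\mathcal L+\mathcal N$, where $\mathcal L$ collects the constant-coefficient linear terms and $\mathcal N$ the terms $N_1,\dots,N_4$. Because the leading coefficients
\[
a_1:=2\rho\langle f_0\rangle^3+3\langle f_0\rangle^2\langle g_0\rangle,\qquad \tilde a_1:=2\langle f_0\rangle^3+3\langle f_0\rangle^2\langle g_0\rangle,
\]
\[
b_1:=2\mu\langle g_0\rangle^3+3\rho\langle f_0\rangle^2\langle g_0\rangle+6\langle f_0\rangle\langle g_0\rangle^2,\qquad b_2:=2\mu\langle g_0\rangle^3+3\langle f_0\rangle^2\langle g_0\rangle+6\langle f_0\rangle\langle g_0\rangle^2
\]
are constants, a few integrations by parts (their number being the only difference between the cases $\zeta=1$ and $\zeta=3$) turn $\mathcal L$ into $-\int_\TT\mathcal Q(\pax^{\zeta+1}\F,\pax^{\zeta+1}\G)\,dx$, where $\mathcal Q(v,w)=a_1v^2+(\tilde a_1+b_1)vw+b_2w^2$. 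The diagonal entries $a_1,b_2$ are automatically positive and $(\tilde a_1+b_1)/2\ge0$, and a direct computation shows that the two displayed hypotheses of the theorem are \emph{precisely} the strict bounds $a_1-\tfrac12(\tilde a_1+b_1)>\mathcal E_0(\F_0,\G_0)\mathscr C_\zeta$ and $b_2-\tfrac12(\tilde a_1+b_1)>\mathcal E_0(\F_0,\G_0)\mathscr C_\zeta$; hence, by the elementary lower bound for $2\times2$ quadratic forms, $\mathcal Q(v,w)\ge\big(2\kappa+\mathcal E_0(\F_0,\G_0)\mathscr C_\zeta\big)(v^2+w^2)$ for some $\kappa>0$, so that $\mathcal L\le-\big(2\kappa+\mathcal E_0(\F_0,\G_0)\mathscr C_\zeta\big)E_{\zeta+1}(\F,\G)$.

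\emph{Nonlinear part.} Every monomial in $N_1,\dots,N_4$ has the form $\pax[P(\F,\G)\,\mathscr D Q]$ with $Q\in\{\F,\G\}$ and $P$ a polynomial that always carries at least one factor of $\F$ or $\G$, its homogeneous parts of degree $1,2,3$ in the fluctuations being multiplied by $\{\langle f_0\rangle^2,\langle f_0\rangle\langle g_0\rangle,\langle g_0\rangle^2\}$, $\{\langle f_0\rangle,\langle g_0\rangle\}$, $\{1\}$ respectively. After pairing with $(-\pax^2)^{(\zeta+1)/2}\F$ (resp.\ $\G$) and one integration by parts, I would distribute the remaining $\zeta+1$ derivatives between $P$ and $\mathscr D Q$. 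In the terms in which all top derivatives land on $\mathscr D Q$, I bound the contribution by $\|P\|_{L^\infty}E_{\zeta+1}(\F,\G)$ and estimate $\|P\|_{L^\infty}$ via Lemma~\ref{lem:Wiener}, the embedding $A^0(\TT)\subset C^0(\TT)$ and $\|\F\|_{L^\infty}\le\|\F\|_{\dot A}\le m(t)\le\mathcal E_0(\F_0,\G_0)$; summing these and inserting the worst-case value $\mathcal E_0(\F_0,\G_0)$, one verifies that the total is at most $\mathcal E_0(\F_0,\G_0)\,\mathscr C_\zeta\,E_{\zeta+1}(\F,\G)$ --- this is exactly where the explicit constants $\mathscr C_1,\mathscr C_3$ are produced. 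In each remaining term at least one derivative hits $P$, and there I would use the Banach-algebra estimate~\eqref{eq:L21}, the interpolation inequality~\eqref{interpolation}, the elementary Fourier-side inclusions $\|u\|_{\dot H^\alpha}\le\|u\|_{\dot A^\alpha}$ and $\|u\|_{\dot A^\beta}\le c\|u\|_{\dot H^\alpha}$ (valid for $\alpha>\beta+\tfrac12$), together with Young's inequality, to bound the term by $\tfrac{\kappa}{M}E_{\zeta+1}(\F,\G)+C(\mathcal E_0(\F_0,\G_0),\langle f_0\rangle,\langle g_0\rangle,\zeta)\,m(t)\,E_{(\zeta+1)/2}(\F,\G)$, with $M$ the finite number of such monomials. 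Collecting everything,
\[
\tfrac{d}{dt}E_{(\zeta+1)/2}(\F(t),\G(t))+c_1\,E_{\zeta+1}(\F(t),\G(t))\le C\,m(t)\,E_{(\zeta+1)/2}(\F(t),\G(t)),
\]
for some $c_1\sim\kappa>0$ and with $m\in L^1(0,\infty)$.

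\emph{Conclusion and main obstacle.} Since $E_{(\zeta+1)/2}(\F_0,\G_0)<\infty$ by hypothesis, Gr\"onwall's lemma gives $\sup_{[0,T]}E_{(\zeta+1)/2}(\F,\G)\le E_{(\zeta+1)/2}(\F_0,\G_0)\exp(C\|m\|_{L^1})=:c_2$ and, integrating once more, $c_1\int_0^TE_{\zeta+1}(\F,\G)\,ds\le c_2$; this is assertion~(i) and yields the regularity $\big(L^\infty(0,T;\dot H^{(\zeta+1)/2})\cap L^2(0,T;\dot H^{\zeta+1})\big)^2$, which upgrades to $\big(C(0,T;\dot H^{(\zeta+1)/2})\big)^2$ by weak continuity together with the norm identity above. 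Finally, for $0\le s<(\zeta+1)/2$ put $\theta:=2s/(\zeta+1)\in[0,1)$ and interpolate: $\|\F(t)\|_{\dot H^s}^2\le\|\F(t)\|_{L^2}^{2(1-\theta)}\|\F(t)\|_{\dot H^{(\zeta+1)/2}}^{2\theta}\le m(t)^{2(1-\theta)}c_2^{\theta}$ (and similarly for $\G$), which with the exponential decay of $m$ from Theorem~\ref{theorem5}(b) gives assertion~(ii) with $c=2\varepsilon(1-\tfrac{2s}{\zeta+1})$, $\varepsilon$ being the rate from Theorem~\ref{theorem5}(b). The hard part is the nonlinear estimate: the $N_i$ are cubic and long, and the estimate is taken at the ``half-dissipation'' level $\dot H^{(\zeta+1)/2}$ rather than at $\dot H^{\zeta+1}$, so for every monomial one must separate the one genuinely top-order contribution --- absorbable only through the small coefficient $\mathcal E_0(\F_0,\G_0)\mathscr C_\zeta$ built into the hypotheses --- from the contributions that can be traded down by the interpolation and Banach-algebra inequalities of Section~\ref{ssub:prel} and then absorbed into the dissipation by Young's inequality; tracking all the combinatorial constants so that the smallness requirement is exactly $\mathcal E_0(\F_0,\G_0)\mathscr C_\zeta$ with $\mathscr C_1,\mathscr C_3$ as defined above is what makes the computation heavy, but it is otherwise routine.
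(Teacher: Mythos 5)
Your overall scheme — pair the $\F$-- and $\G$--equations of \eqref{eq:system_Stokesfinal2} with $(-\pax^2)^{(\zeta+1)/2}\F$ and $(-\pax^2)^{(\zeta+1)/2}\G$ respectively, identify the dissipation at level $\dot H^{\zeta+1}$, absorb the top--order nonlinear contributions via the smallness encoded in $\mathscr C_\zeta$, and Gr\"onwall on the remainder — is exactly the route the paper takes (Section~\ref{section9}, which mirrors Section~\ref{section6}). Your handling of the linear quadratic form is correct: writing $\mathcal L=-\int_\TT\mathcal Q(\pax^{\zeta+1}\F,\pax^{\zeta+1}\G)\,dx$ with $\mathcal Q(v,w)=a_1v^2+(\tilde a_1+b_1)vw+b_2w^2$, your algebra
\[
a_1-\tfrac12(\tilde a_1+b_1) \;=\; (2\rho-1)\langle f_0\rangle^3-\tfrac32(\rho-1)\langle f_0\rangle^2\langle g_0\rangle-3\langle f_0\rangle\langle g_0\rangle^2-\mu\langle g_0\rangle^3,
\]
and the analogous identity for $b_2-\tfrac12(\tilde a_1+b_1)$, match the hypotheses of the theorem, and the positive-definiteness you invoke does follow from those strict bounds. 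The final interpolation argument for assertion~(ii) is also correct.

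The gap is in the nonlinear remainder. After the top--derivative pieces of $N_1,\dots,N_4$ have been absorbed (your $K_1,K_4,K_7,K_{10}$--type terms, producing the factor $\mathcal E_0(\F_0,\G_0)\mathscr C_\zeta$), the remaining terms — those in which at least one derivative hits the polynomial $P$, i.e.\ the $K_2,K_3,K_5,K_6,K_8,K_9,K_{11},K_{12}$ of Section~\ref{section9} — cannot be brought into the form
$\tfrac{\kappa}{M}E_{\zeta+1}(\F,\G)+C\,m(t)\,E_{(\zeta+1)/2}(\F,\G)$ with $m(t):=\mathcal E_0(\F(t),\G(t))$.
A representative piece is $\int_\TT \pax^2\F\,(\pax^3\G)^2\,dx\le\|\pax^2\F\|_{L^\infty}\|\G\|_{\dot H^2}\|\G\|_{\dot H^4}$; the obstruction is that $\|\pax^2\F\|_{L^\infty}$ is not controlled by $m(t)$, and the Wiener--space interpolation/embedding inequalities you cite only give $\|\pax^2\F\|_{L^\infty}\le\|\F\|_{\dot A^2}\le m^{1/2}\|\F\|_{\dot A^4}^{1/2}$ or $\|\F\|_{\dot A^2}\le c\|\F\|_{\dot H^{2+1/2+\varepsilon}}$ — either way a norm strictly stronger than what can be traded into $E_{\zeta+1}$ via Young with the small constant $\kappa/M$. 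What the paper actually does is leave this factor as $\mathscr E_{\zeta+1}(\F,\G)=\|\pax^{\zeta+1}\F\|_{L^\infty}+\|\pax^{\zeta+1}\G\|_{L^\infty}$ (through the Kolmogorov--Landau and $L^4$--estimates $\|\pax^3h\|_{L^4}^2\le3\|\pax^4h\|_{L^\infty}\|h\|_{\dot H^2}$, etc.), arriving at $\tfrac12\tfrac{d}{dt}E_{(\zeta+1)/2}\le-\eta E_{\zeta+1}+cE_{(\zeta+1)/2}\,\mathscr E_{\zeta+1}$, and then invokes the conclusion of Theorem~\ref{theorem5} that the approximating sequence lies in $L^1\bigl(0,T;\dot A^{\zeta+1}(\TT)\bigr)$, so that $\mathscr E_{\zeta+1}\le\mathcal E_{\zeta+1}$ is integrable in time and Gr\"onwall closes. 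You do not invoke this $L^1$--in--time bound at level $\dot A^{\zeta+1}$ anywhere in your outline; the only $L^1$ quantity you use is $m(t)$, which is too weak. The fix is simply to replace $m(t)$ by $\mathcal E_{\zeta+1}(\F,\G)(t)$ in the Gr\"onwall coefficient and to record that this is $L^1$ by the a priori estimate of Theorem~\ref{theorem5}; with that substitution the rest of your argument goes through as in the paper.
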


\medskip

The remaining of the present work is devoted to the proofs of Theorem \ref{theorem1} - Theorem \ref{theorem6} given in Section \ref{section4} - Section \ref{section9}. The main ideas can be found in Section \ref{section4} (proof of Theorem \ref{theorem1}), where we show the existence of global weak solutions for initial data in the Wiener algebra $A(\TT)$ with explicit decay rates towards equilibia, and in Section \ref{section6} (proof of Theorem \ref{theorem3}), where we show that if the initial data satisfy additionally Sobolev regularity and some size restrictions, we can propagate Sobolev regularity for the corresponding global weak solution.

\bigskip

\section{Existence and decay for the capillary driven thin film Muskat system in the Wiener algebra}\label{section4}

The proof of Theorem \ref{theorem1} is split into several steps. Let us first observe that any local solution of \eqref{eq:system3} satisfies some a priori energy estimates in the Wiener spaces.

\begin{lem}[Energy estimate]\label{lem:EE} If $(\F,\G)\in \left(C^1\left([0,T); \dot A(\TT)\right)\right)^2$ is a local solution of \eqref{eq:system3} to the initial datum $(\F_0, \G_0)\in \left( \dot A(\TT)\right)^2$ satisfying the size restriction
\begin{equation}\label{eq:size}
	\sigma_{1,A}, \sigma_{2,A}, \sigma_{1,b}, \sigma_{2,b}>0,
\end{equation}
then there exist $\delta_A, \delta_b>0$ such that
\[
\frac{d}{dt}\mathcal{E}_0(\F(t), \G(t))+\delta_A\mathcal{E}_{4}(\F(t), \G(t))+\delta_b\mathcal{E}_2(\F(t), \G(t)) \leq 0 \qquad \mbox{for all}\quad t\in (0,T)
\]
and
\[
\int_0^T\|\partial_t \F(t)\|_{\dot A}+\|\partial_t \G(t)\|_{\dot A}ds\leq c,
\]
for an explicit constant $c=c(\F_0,\G_0,\mu_{\pm}, \rho_{\pm}, \gamma_f, \gamma_h)$.
\end{lem}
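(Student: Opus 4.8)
The natural strategy is to carry out the estimate entirely on the Fourier side, since each $\mathcal{E}_s$ is a weighted $\ell^1$-norm of Fourier coefficients and the linear part of \eqref{eq:system3} is diagonal in the basis $(e^{ikx})_{k\in\ZZ}$. Taking Fourier coefficients in \eqref{eq:system3}, for every $k\neq0$,
\[
\pat\widehat{\F}(k)=-\langle f_0\rangle\bigl(A_\gamma k^4+b_\rho k^2\bigr)\widehat{\F}(k)-\langle f_0\rangle\bigl(Ak^4+bk^2\bigr)\widehat{\G}(k)+\widehat{N_{1,A}}(k)+\widehat{N_{1,b}}(k),
\]
and analogously for $\pat\widehat{\G}(k)$, with self-interaction coefficient $\langle g_0\rangle(A_\mu k^4+b_\mu k^2)$ and a cross-coefficient of the same magnitude. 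Using $\tfrac{d}{dt}|\widehat{\F}(k)|\le\re\bigl(\overline{\mathrm{sgn}\,\widehat{\F}(k)}\,\pat\widehat{\F}(k)\bigr)$ (reading the $k$-th term as $0$ when $\widehat{\F}(k)=0$), the self-interaction terms contribute $-\langle f_0\rangle(A_\gamma k^4+b_\rho k^2)|\widehat{\F}(k)|$ with the favourable sign, while the cross-terms and nonlinear terms are estimated in absolute value; summing over $k$ yields
\[
\tfrac{d}{dt}\|\F\|_{\dot A}\le-\langle f_0\rangle A_\gamma\|\F\|_{\dot A^4}-\langle f_0\rangle b_\rho\|\F\|_{\dot A^2}+\langle f_0\rangle A\|\G\|_{\dot A^4}+\langle f_0\rangle b\|\G\|_{\dot A^2}+\|N_{1,A}\|_{\dot A}+\|N_{1,b}\|_{\dot A},
\]
together with the analogous inequality for $\|\G\|_{\dot A}$. (The term-by-term differentiation and the presence of $\pax^4\F$ are legitimate for sufficiently regular solutions, which is the setting in which this a priori bound is used.)

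Next I would estimate the nonlinear terms. Since each $N_{i,\cdot}$ is an $x$-derivative one has $\|N_{1,A}\|_{\dot A}=\|\F(A_\gamma\pax^3\F+A\pax^3\G)\|_{\dot A^1}$, and similarly for the other three. Applying the product inequality $\|uv\|_{\dot A^1}\le\|u\|_{\dot A^1}\|v\|_{\dot A}+\|u\|_{\dot A}\|v\|_{\dot A^1}$ (the case $\alpha=1$ in the proof of Lemma \ref{lem:Wiener}) together with the interpolation inequality \eqref{interpolation} — for instance $\|\F\|_{\dot A^1}\|\F\|_{\dot A^3}\le\|\F\|_{\dot A}\|\F\|_{\dot A^4}$ and $\|\F\|_{\dot A^1}^2\le\|\F\|_{\dot A}\|\F\|_{\dot A^2}$, combined with Young's inequality for the mixed products — each of the four nonlinear terms is bounded by $\mathcal{E}_0(\F,\G)$ times a linear combination of $\|\F\|_{\dot A^4},\|\G\|_{\dot A^4}$ (for the $A$-terms) respectively $\|\F\|_{\dot A^2},\|\G\|_{\dot A^2}$ (for the $b$-terms). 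Adding the two differential inequalities and collecting the coefficients of $\|\F\|_{\dot A^4},\|\G\|_{\dot A^4},\|\F\|_{\dot A^2},\|\G\|_{\dot A^2}$ produces
\[
\tfrac{d}{dt}\mathcal{E}_0(\F,\G)+\sigma_{1,A}\|\F\|_{\dot A^4}+\sigma_{2,A}\|\G\|_{\dot A^4}+\sigma_{1,b}\|\F\|_{\dot A^2}+\sigma_{2,b}\|\G\|_{\dot A^2}\le0
\]
on the set of times where $\mathcal{E}_0(\F(t),\G(t))\le\mathcal{E}_0(\F_0,\G_0)$, with the $\sigma_{\cdot,\cdot}$ exactly as defined before Theorem \ref{theorem1}.

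To remove the conditional hypothesis I would run a standard continuity argument: on the maximal interval where $\mathcal{E}_0(\F(\cdot),\G(\cdot))\le\mathcal{E}_0(\F_0,\G_0)$ holds, the displayed inequality together with the standing assumption \eqref{eq:size} forces $\mathcal{E}_0(\F,\G)$ to be non-increasing, so by continuity this interval is all of $[0,T)$; hence the inequality holds on $(0,T)$, and since $\sigma_{1,A}\|\F\|_{\dot A^4}+\sigma_{2,A}\|\G\|_{\dot A^4}\ge\delta_A\mathcal{E}_4(\F,\G)$ and $\sigma_{1,b}\|\F\|_{\dot A^2}+\sigma_{2,b}\|\G\|_{\dot A^2}\ge\delta_b\mathcal{E}_2(\F,\G)$ with $\delta_A:=\min\{\sigma_{1,A},\sigma_{2,A}\}>0$, $\delta_b:=\min\{\sigma_{1,b},\sigma_{2,b}\}>0$, we obtain the first assertion. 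For the second assertion, integrating $\tfrac{d}{dt}\mathcal{E}_0+\delta_A\mathcal{E}_4+\delta_b\mathcal{E}_2\le0$ over $[0,T]$ gives $\int_0^T(\delta_A\mathcal{E}_4+\delta_b\mathcal{E}_2)\,dt\le\mathcal{E}_0(\F_0,\G_0)$; reading $\pat\F$ and $\pat\G$ off \eqref{eq:system3} and bounding the linear parts by $c\,(\mathcal{E}_2(\F,\G)+\mathcal{E}_4(\F,\G))$ and the nonlinear parts by $c\,\mathcal{E}_0(\F_0,\G_0)(\mathcal{E}_2(\F,\G)+\mathcal{E}_4(\F,\G))$ (again via Lemma \ref{lem:Wiener}), one gets $\|\pat\F\|_{\dot A}+\|\pat\G\|_{\dot A}\le c\,(\mathcal{E}_2(\F,\G)+\mathcal{E}_4(\F,\G))$, which integrates to the stated bound with an explicit constant.

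The conceptual heart is the continuity/bootstrap step, which converts the conditional estimate into an unconditional one and for which the size restriction \eqref{eq:size} is precisely tailored. I expect the only real work to be the bookkeeping: tracking the numerical constants through the product and interpolation estimates of the four nonlinear terms so that they assemble exactly into $\sigma_{1,A},\sigma_{2,A},\sigma_{1,b},\sigma_{2,b}$ — in particular making sure that Young's inequality distributes the top-order norms ($\|\F\|_{\dot A^4}$ versus $\|\G\|_{\dot A^4}$, and likewise at order $2$) between the two equations with the right weights.
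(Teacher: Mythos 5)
Your proposal follows essentially the same path as the paper's proof: differentiating $\|\cdot\|_{\dot A}$ on the Fourier side via $\tfrac{d}{dt}|\hat u(k)|=\mathrm{Re}(\hat u^*(k)\pat\hat u(k))/|\hat u(k)|$, keeping the dissipative self-terms with their sign and moving the cross- and nonlinear terms to the right, bounding the nonlinearities by the product rule and interpolation in $\dot A^\alpha$, assembling the resulting coefficients into $\sigma_{1,A},\sigma_{2,A},\sigma_{1,b},\sigma_{2,b}$, and running the continuity/bootstrap argument to propagate $\mathcal{E}_0(\F,\G)\le\mathcal{E}_0(\F_0,\G_0)$. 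The only cosmetic deviation is that the paper handles the mixed terms like $\|\F\|_{\dot A^1}\|\G\|_{\dot A^3}$ by a max-argument rather than Young's inequality, but both yield coefficients that fit the stated $\sigma$'s.
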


\begin{proof}
Let $T>0$ and $(\F,\G)\in \left(C^1\left([0,T);\dot A(\TT)\right)\right)^2$ be a local solution of \eqref{eq:system3} with initial datum $(\F_0, \G_0)\in \left( \dot A(\TT)\right)^2$. We are going to show that under the size restriction \eqref{eq:size} on the initial datum $(\F_0, \G_0)$ the functional
\[
	\mathcal{E}_0(\F,\G)=  \|\F\|_{ \dot A} + \|\G\|_{\dot A}
\]
decreases in time.
To do so, we compute
\begin{align*}
\pat|\hat{\F}(k)| &=\frac{\text{Re}\left(\hat{\F}^*(k)\pat\hat{\F}(k)\right)}{|\hat{\F}(k)|},
\end{align*}
where $\F^*$ denotes the complex conjugate of $\F$.
Then,
\begin{align}\label{eq:estimate_F}
\frac{d}{dt}\|\F\|_{\dot A}\leq - \langle f_0 \rangle \left[ A_\gamma\|\F\|_{\dot{A}^4}- A \|\G\|_{\dot{A}^4} + b_\rho \|\F\|_{\dot{A}^2}-b\|\G\|_{\dot{A}^2}\right] + \|N_{1,A}\|_{ \dot A}+ \|N_{1,b}\|_{ \dot A},
\end{align}
and similarly,
\begin{align}\label{eq:estimate_G}
\frac{d}{dt}\|\G\|_{\dot A}\leq - \langle g_0 \rangle  \left[A_\mu \|\G\|_{\dot{A}^4}-A_\mu\|\F\|_{\dot{A}^4} + b_\mu \|\G\|_{\dot{A}^2}- b_\mu \|\F\|_{\dot{A}^2}\right] + \|N_{2,A}\|_{\dot A}+ \|N_{2,b}\|_{ \dot A}.
\end{align}

We are going to use the Banach algebra property for functions having zero mean and the interpolation inequality in Lemma \ref{lem:Wiener} to estimate the nonlinear terms $\|N_{i,A}\|_{\dot A}$ and $\|N_{i,b}\|_{\dot A}$, $i=1,2$.
Notice first that due to the interpolation inequalities, we can estimate 
\begin{align}\label{eq:estimate4}
\|\F\|_{\dot{A}^1}\|\F\|_{\dot{A}^3} &\leq \|\F\|_{\dot A}\|\F\|_{\dot{A}^4},
\end{align}
and 
\begin{align}\label{eq:estimate6}
\begin{split}
\|\F\|_{\dot{A}^1}\|g\|_{\dot{A}^3} &\leq \max\left\{\|\F\|_{\dot A},\|\G\|_{\dot A}\right\}\max\left\{\|\F\|_{\dot{A}^4},\|\G\|_{\dot{A}^4}\right\}\leq \mathcal{E}_0(\F,\G)\left(\|\F\|_{\dot{A}^4}+\|\G\|_{\dot{A}^4}\right).
\end{split}
\end{align}
Using, \eqref{eq:estimate4} and \eqref{eq:estimate6}, we obtain that
\begin{align}\label{eq:N1A}
\begin{split}
\|N_{1,A}\|_{\dot A}&\leq \|\F\|_{\dot{A}^1} \left(A_\gamma \|\F\|_{\dot{A}^3} + A\|\G\|_{\dot{A}^3} \right) + \|\F\|_{A}\left(A_\gamma \|\F\|_{\dot{A}^4} + A \|\G\|_{\dot{A}^4}\right)\\
&\leq 2A_\gamma\|\F\|_{\dot A}\|\F\|_{\dot{A}^4} + A \mathcal{E}_0(\F,\G)\left(\|\F\|_{\dot{A}^4}+\|\G\|_{\dot{A}^4}\right)+A\|\F\|_{\dot A}\|\G\|_{\dot{A}^4}\\
&\leq 2A_\gamma\|\F\|_{\dot A}\|\F\|_{\dot{A}^4} + 2A \mathcal{E}_0(\F,\G)\left(\|\F\|_{\dot{A}^4}+\|\G\|_{\dot{A}^4}\right)\\
&\leq (2A_\gamma+2A ) \mathcal{E}_0(\F,\G)\left(\|\F\|_{\dot{A}^4}+\|\G\|_{\dot{A}^4}\right).
\end{split}
\end{align}
Similarly,
\begin{align}\label{eq:N2A}
\begin{split}
\|N_{2,A}\|_{\dot A}&\leq A_\mu \|\G\|_{\dot{A}^1} \left( \|\F\|_{\dot{A}^3} + \|\G\|_{\dot{A}^3} \right) + A_\mu \|\G\|_{\dot A}\left( \|\F\|_{\dot{A}^4} +  \|\G\|_{\dot{A}^4}\right)\\
&\leq 4A_\mu\mathcal{E}_0(\F(t),\G(t))\left(\|\F\|_{\dot{A}^4}+\|\G\|_{\dot{A}^4}\right).
\end{split}
\end{align}
In view of
\begin{align*} 
\|\F\|_{\dot{A}^1}^2 &\leq \|\F\|_{\dot A}\|\F\|_{\dot{A}^2},
\end{align*}
and
\begin{align*}
\|\F\|_{\dot{A}^1}\|g\|_{\dot{A}^1} &\leq \max\left\{\|\F\|_{A},\|\G\|_{A}\right\}\max\left\{\|\F\|_{\dot{A}^2},\|\G\|_{\dot{A}^2}\right\}\leq \mathcal{E}_0(\F,\G)\left(\|\F\|_{\dot{A}^2}+\|\G\|_{\dot{A}^2}\right),
\end{align*}
the second order nonlinearities can be estimated as
\begin{align}\label{eq:N1b}
\begin{split}
\|N_{1,b}\|_{\dot A}&\leq \|\F\|_{\dot{A}^1}\left(b_\rho \|\F\|_{\dot{A}^1} + b\|\G\|_{\dot{A}^1} \right) + \|\F\|_{\dot A}\left(b_\rho \|\F\|_{\dot{A}^2}+ b\|\G\|_{\dot{A}^2} \right)\\
&\leq (2b_\rho+2b)\mathcal{E}_0(\F,\G)(\|\F\|_{\dot{A}^2}+\|\G\|_{\dot{A}^2})
\end{split}
\end{align}
and
\begin{align}\label{eq:N2b}
\begin{split}
\|N_{2,b}\|_{\dot A}&\leq b_\mu\|\G\|_{\dot{A}^1}\left( \|\F\|_{\dot{A}^1} + \|\G\|_{\dot{A}^1} \right) + b_\mu\|\G\|_{\dot A}\left( \|\F\|_{\dot{A}^2}+ \|\G\|_{\dot{A}^2} \right)\\
&\leq 4b_\mu\mathcal{E}_0(\F,\G)(\|\F\|_{\dot{A}^2}+\|\G\|_{\dot{A}^2}).
\end{split}
\end{align}

Adding \eqref{eq:estimate_F} and \eqref{eq:estimate_G} and using \eqref{eq:N1A},  \eqref{eq:N2A}, \eqref{eq:N1b}, and \eqref{eq:N2b}, gives rise to
\begin{align}\label{ine}
\begin{split}
\frac{d}{dt}\mathcal{E}_0(\F,\G)(t)&\leq -\left[ \langle f_0 \rangle A_\gamma- \langle g_0 \rangle A_\mu - (A_\mu+2A_\gamma+2A)\mathcal{E}_0(\F(t),\G(t))\right]\|\F\|_{\dot A^4}  \\
&\quad-\left[\langle g_0 \rangle A_\mu - \langle f_0 \rangle A - (A_\mu+2A_\gamma+2A)\mathcal{E}_0(\F(t),\G(t)) \right]\|\G\|_{\dot A^4} \\
&\quad - \left[ \langle f_0 \rangle b_\rho - \langle g_0 \rangle b_\mu - \mathcal{E}_0(\F(t),\G(t))(2b_\rho+2b+4b_\mu)\right]\|\F\|_{\dot A^2}  \\
&\quad-\left[ \langle g_0 \rangle b_\mu - \langle f_0 \rangle b - \mathcal{E}_0(\F(t),\G(t))(2b_\rho+2b+4b_\mu)\right] \|\G\|_{\dot A^2}.
\end{split}
\end{align}
As a consequence of \eqref{eq:size} we obtain that
\begin{align*}
\frac{d}{dt}\mathcal{E}_0(\F(t),\G(t))\bigg{|}_{t=0}&\leq 0.
\end{align*}
By continuity there exists a time $t_0\in (0,T)$ such that
\begin{equation}\label{localdecay}
\mathcal{E}_0(\F(t),\G(t))\leq \mathcal{E}_0(\F(0),\G(0))\qquad \mbox{for all}\quad t\in [0,t_0].
\end{equation}
We want to propagate the local in time decay \eqref{localdecay} for all times $t\in [0,T)$. Let us emphasize that $\mathcal{E}_0(\F(t),\G(t))\leq \mathcal{E}_0(\F_0,\G_0)$ implies that for any $t\in [0,t_0]$ we have that
$$
\langle f_0 \rangle A_\gamma- \langle g_0 \rangle A_\mu - (A_\mu+2A_\gamma+2A)\mathcal{E}_0(\bar{f}(t),\bar{g}(t))\geq \sigma_{1,A}>0,
$$
$$
\langle g_0 \rangle A_\mu - \langle f_0 \rangle A - (A_\mu+2A_\gamma+2A)\mathcal{E}_0(\bar{f}(t),\bar{g}(t))\geq \sigma_{2,A}>0.
$$
and
\begin{align*}
\langle f_0 \rangle b_\rho - \langle g_0 \rangle b_\mu - \mathcal{E}_0(\bar{f}(t),\bar{g}(t))(2b_\rho+2b+4b_\mu)\geq \sigma_{1,b}>0,\\
\langle g_0 \rangle b_\mu - \langle f_0 \rangle b - \mathcal{E}_0(\bar{f}(t),\bar{g}(t))(2b_\rho+2b+4b_\mu)\geq \sigma_{2,b}>0.
\end{align*}
Assume that
$$
\mathcal{E}_0(\F(t),\G(t))\leq \mathcal{E}_0(\F_0,\G_0)
$$
holds for a maximal time interval $[0,t_m]$ and $t_m<T$. Then $\mathcal{E}_0(\F(t_m),\G(t_m))=\mathcal{E}_0(\F_0,\G_0)$, but again in view of \eqref{ine} this implies that 
\begin{align*}
\frac{d}{dt}\mathcal{E}_0(\F(t),\G(t))\bigg{|}_{t=t_m}&\leq 0,
\end{align*}
which is a contradiction to $t_m<T$ and we have shown that in fact
\begin{equation*} 
\mathcal{E}_0(\F(t),\G(t))\leq \mathcal{E}_0(\F_0,\G_0)\quad \mbox{for all}\quad t\in [0,T).
\end{equation*}
Now, setting $\delta_A:=\min\{\sigma_{1,A}, \sigma_{2,A}\}$ and $\delta_b:= \min\{\sigma_{1,b},\sigma_{2,b}\}$, inequality \eqref{ine} reads
\begin{equation*} 
	\frac{d}{dt}\mathcal{E}_0(\F(t),\G(t))\leq -\delta_A \mathcal{E}_4(\F(t),\G(t))-\delta_b \mathcal{E}_2(\F(t),\G(t))\quad \mbox{for all}\quad t\in [0,T).
\end{equation*}
Finally, we observe that our estimates in the Wiener space \eqref{ine} and 
\begin{equation*} 
\mathcal{E}_0(\F(T),\G(T))+\int_0^T\delta_A \mathcal{E}_4(\F(t),\G(t))+\delta_b \mathcal{E}_2(\F(t),\G(t))dt\leq \mathcal{E}_0(\F_0,\G_0).
\end{equation*}
 guarantee that
$$
\int_0^T\|\partial_t \F(t)\|_{\dot A}+\|\partial_t \G(t)\|_{\dot A}ds\leq c,
$$
for an explicit constant $c=c(\F_0,\G_0,\mu_{\pm}, \rho_{\pm}, \gamma_f, \gamma_h)>0$. 

\end{proof}

\subsection{Existence of global weak solutions} We start be constructing a global Galerkin approximation to \eqref{eq:system3}. The a priori estimates provided by the energy functional $\mathcal{E}_0$ then allow us to pass to the limit and to obtain a global weak solution in the sense of Definition \ref{defi1}.  
We are looking for continuously differentiable functions $\F_k, \G_k$ such that problem \eqref{eq:system3} is satisfied in the weak sense when testing against functions form a $k$-dimensional subspace. For this purpose, set 
\[
	\F_k(t,x):= \sum_{|n|\leq k}\hat \F(t,n)e^{inx} \quad \mbox{and} \quad \G_k(t,x):= \sum_{|n|\leq k}\hat \G(t,n)e^{inx}\qquad \mbox{for}\quad k\in \NN.
\] 
Since $\F_0,\G_0 \in \dot A(\TT)$ their Fourier series converge and we define
\[
	\F_k(0,x):=\sum_{|n|\leq k}\hat \F_0(n)e^{inx}\quad \mbox{and} \quad \G_k(0,x):= \sum_{|n|\leq k}\hat \G_0(n)e^{inx}\qquad \mbox{for}\quad k\in \NN.
\]
Let us denote by $\{\phi_j(x)\}_{j\in \NN}:=\{e^{ijx}\}_{j\in \NN}$ an orthogonal basis of $L_2(\TT)$. For $k\in\NN$, we consider the Galerkin approximated problems
\begin{align}\label{eq:system3reg}
\begin{split}
\partial_t \F_k &= -\langle f_0 \rangle \left[ A_\gamma \partial_x^4 \F_k + A \partial_x^4 \G_k - b_\rho \partial_x^2 \F_k - b\partial_x^2 \G_k \right]+N^k_{1,A}+N^k_{1,b}, \\
\partial_t \G_k&=-\langle g_0 \rangle \left[ A_\mu \partial_x^4 \F_k + A_\mu \partial_x^4 \G_k - b_\mu \partial_x^2\F_k - b_\mu \partial_x^2 \G_k \right]+N^k_{2,A}+N^k_{2,b},
 \end{split}
\end{align}
where the nonlinear terms $N^k_{i,A}$ and $N^k_{i,b}$, $i=1,2$ are given by
\begin{align*}
N^k_{1,A}&:=-\partial_x {P_k}\left[ \F_k \left( A_\gamma \partial_x^3 \F_k + A \partial_x^3 \G_k \right)\right], 
\\ N^k_{1,b}&:= \partial_x {P_k}\left[ \F_k \left(b_\rho \partial_x \F_k + b \partial_x \G_k\right) \right], 
\\
N^k_{2,A}&:=-\partial_x {P_k}\left[ \G_k \left( A_\mu \partial_x^3 \F_k +  A_\mu\partial_x^3 \G_k \right) \right], 
\\ N^k_{2,b}&:=  \partial_x {P_k}\left[ \F_k \left( b_\mu\partial_x \F_k +  b_\mu\partial_x \G_k\right) \right], 
\end{align*}
and $P_k$ denotes the projection on the subspace spanned by $\{\phi_{-k}, \ldots, \phi_k\}$. Testing the equations in \eqref{eq:system3reg} subsequently against $\phi_n$, $n=-k, \ldots, k$, yields a system of ordinary differential equations for the unknowns $\{\hat \F(\cdot, n),\hat \G(\cdot, n) \}$. Then, the Picard--Lindel\"of theorem implies the existence of local solutions $(\F_k,\G_k)\in C^1([0,T_k),C^\infty(\TT))$ to the approximate problem, where $T_k>0$ is the maximal time of existence.
Moreover, the statement of Lemma \ref{lem:EE} remains true for $(\F_k,\G_k)_{k\in \NN}$ and consequently the sequence of solutions exists globally and satisfies for any $T>0$ the following bounds:
\begin{align}\label{eq:BX}
	&(\F_k, \G_k)_k \mbox{ is uniformely bounded in } \left( L^\infty\left(0,T;\dot A(\TT)\right)\cap L^1\left(0,T;\dot A^4(\TT)\right) \right)^2,
\end{align}
\begin{align}
	\label{eq:BT}
	& (\partial_t\F_k, \partial_t \G_k)_k \mbox{ is uniformely bounded in }  \left(L^1\left(0,T;\dot A(\TT)\right)\right)^2.
\end{align}

From here we obtain the following convergences:
\begin{lem}\label{lem:convergences} The sequence $\left(\F_k, \G_k\right)_k$ satisfies
\begin{itemize}
\item[i)] $(\F_k, \G_k) \overset{*}{\warrow} (\F, \G)$ in $\left(L^\infty\left(0,T;L^\infty(\TT)\right)\right)^2$,
\item[] $(\pax^4\F_k, \pax^4\G_k) \overset{*}{\warrow} (\pax^4\F, \pax^4 \G)$ in $ \left(\mathcal{M}\left(0,T;L^\infty(\TT)\right)\right)^2$,
\item[ii)] $(\F_k, \G_k) \rightarrow (\F, \G)$ in $\left(L^1\left(0,T;C^{3+\alpha}(\TT)\right)\right)^2$,
\item[iii)] $(\partial_x^3\F_k, \partial_x^3\G_k) \overset{*}{\warrow} (\partial_x^3\F, \partial_x^3\G)$ in $\left(L^\frac{4}{3}\left(0,T;L^\infty(\TT)\right)\right)^2$
\item[iv)]  $(\F_k, \G_k) \warrow (\F, \G)$ in $\left(L^2\left(0,T;H^{2}(\TT)\right)\right)^2$, and
\item[] $(\F_k, \G_k)\rightarrow (\F, \G)$ in $\left(L^2\left(0,T; C^{1+\alpha}(\TT)\right)\right)^2$, 
\end{itemize}
where $\alpha \in [0,\frac{1}{2})$.
Moreover, the limit function $(\F, \G)$ possesses the regularity
\begin{align*}
	(\F,\G)\in &\Big(L^\infty\left(0,T;L^\infty(\TT)\right)\cap \mathcal{M}\left(0,T;W^{4,\infty}(\TT)\right) \\
	&\qquad \cap L^1\left(0,T;C^{3+\alpha}(\TT)\right) \cap L^\frac{4}{3}\left(0,T;W^{3,1}(\TT)\right) \cap L^2\left(0,T;\dot H^2(\TT)\right)\Big)
\end{align*}
for $\alpha \in [0,\frac{1}{2})$.
\end{lem}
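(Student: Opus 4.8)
The plan is to pass to the limit in the Galerkin approximation \eqref{eq:system3reg} by combining weak-$*$ compactness (Banach--Alaoglu) with an Aubin--Lions--Simon compactness argument, after first upgrading the a priori bounds \eqref{eq:BX}--\eqref{eq:BT} to the scales of space-time integrability that appear in the statement. To this end I would first exploit the interpolation inequality \eqref{interpolation} of Lemma~\ref{lem:Wiener}: with $\theta=3/4$ (so that $\alpha/\theta=4$) one has $\|\F_k\|_{\dot A^3}\leq\|\F_k\|_{\dot A}^{1/4}\|\F_k\|_{\dot A^4}^{3/4}$, whence
\[
\int_0^T\|\F_k\|_{\dot A^3}^{4/3}\,dt\leq\|\F_k\|_{L^\infty(0,T;\dot A)}^{1/3}\int_0^T\|\F_k\|_{\dot A^4}\,dt\leq c ,
\]
and analogously for $\G_k$, so that $(\F_k,\G_k)_k$ is bounded in $\bigl(L^{4/3}(0,T;\dot A^3(\TT))\bigr)^2$; with $\theta=1/2$ one obtains $\|\F_k\|_{\dot A^2}^2\leq\|\F_k\|_{\dot A}\|\F_k\|_{\dot A^4}$, and since $\|u\|_{\dot H^2}\leq\|u\|_{\dot A^2}$ (because $(\sum_m a_m)^2\geq\sum_m a_m^2$ for $a_m\geq 0$) the same computation bounds $(\F_k,\G_k)_k$ in $\bigl(L^2(0,T;\dot H^2(\TT))\bigr)^2$. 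I will also use the embeddings $\dot A^4(\TT)\hookrightarrow C^{3+\alpha}(\TT)$ (since $u\in\dot A^4$ forces $\pax^3u\in\dot A^\alpha\subset C^\alpha$ for $\alpha\in[0,1)$) and $\dot H^2(\TT)\hookrightarrow C^{1+\alpha}(\TT)$ for $\alpha\in[0,\tfrac12)$, both of which become compact after decreasing $\alpha$ slightly.

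Next, because $\dot A(\TT)\hookrightarrow L^\infty(\TT)$, the bound \eqref{eq:BX} makes $(\F_k,\G_k)_k$ bounded in $\bigl(L^\infty(0,T;L^\infty(\TT))\bigr)^2$ and $(\pax^4\F_k,\pax^4\G_k)_k$ bounded in $\bigl(L^1(0,T;L^\infty(\TT))\bigr)^2\subset\bigl(\mathcal M(0,T;L^\infty(\TT))\bigr)^2$, while the new bounds of the first step control $(\pax^3\F_k,\pax^3\G_k)_k$ in $\bigl(L^{4/3}(0,T;L^\infty(\TT))\bigr)^2$ and $(\F_k,\G_k)_k$ in $\bigl(L^2(0,T;\dot H^2(\TT))\bigr)^2$. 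Viewing each of these spaces as a subset of a dual space with separable predual, Banach--Alaoglu together with a diagonal extraction yields a single subsequence (not relabelled) along which all four weak-$*$/weak convergences in (i), (iii), and the first line of (iv) hold, for some limiting pair $(\F,\G)$; testing the convergences against functions in $C^\infty_c((0,T)\times\TT)$ identifies the limits of $\pax^3\F_k$ and $\pax^4\F_k$ (and their $\G$-analogues) with $\pax^3\F$ and $\pax^4\F$, so that all these limits are consistent with the single function $(\F,\G)$.

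For the strong convergences (ii) and the second part of (iv) I would invoke Simon's compactness theorem. Applied to the triple $\dot A^4(\TT)\hookrightarrow\hookrightarrow C^{3+\alpha}(\TT)\hookrightarrow L^2(\TT)$ it gives that the set $\{u:\ u\in L^1(0,T;\dot A^4(\TT)),\ \pat u\in L^1(0,T;L^2(\TT))\}$ is relatively compact in $L^1(0,T;C^{3+\alpha}(\TT))$; the hypothesis on $\pat u$ is furnished by \eqref{eq:BT} and $\dot A\hookrightarrow L^2$, and the fractional-translation estimate $\|\tau_h u-u\|_{L^1(0,T-h;L^2)}\leq h\|\pat u\|_{L^1(0,T;L^2)}$ supplies the equicontinuity required even at the borderline exponent $p=1$. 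Hence, along a further subsequence, $(\F_k,\G_k)\to(\F,\G)$ in $\bigl(L^1(0,T;C^{3+\alpha}(\TT))\bigr)^2$. Running the same argument with the triple $\dot H^2(\TT)\hookrightarrow\hookrightarrow C^{1+\alpha}(\TT)\hookrightarrow L^2(\TT)$ and the $L^2(0,T;\dot H^2)$ bound of the first step yields $(\F_k,\G_k)\to(\F,\G)$ in $\bigl(L^2(0,T;C^{1+\alpha}(\TT))\bigr)^2$. The stated regularity of $(\F,\G)$ then follows from weak-$*$ lower semicontinuity of norms applied to the uniform bounds of the first two steps, together with the convergences just established.

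The main obstacle I anticipate is the borderline ($p=1$ in time) application of the Aubin--Lions--Simon lemma: one must use Simon's refined statement and carefully verify the fractional-translation/equicontinuity bound above, since the classical formulations are usually stated for $p>1$ or require a reflexive target. A secondary technical point is checking that the embeddings $\dot A^4(\TT)\hookrightarrow C^{3+\alpha}(\TT)$ and $\dot H^2(\TT)\hookrightarrow C^{1+\alpha}(\TT)$ are compact for $\alpha<\tfrac12$; once these ingredients are in place, the remainder is routine bookkeeping --- a single diagonal extraction of subsequences and the identification of distributional limits.
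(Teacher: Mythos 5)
Your proof follows essentially the same route as the paper's: the $L^{4/3}(0,T;\dot A^3)$ and $L^2(0,T;\dot H^2)$ bounds via the Wiener-scale interpolation inequality \eqref{interpolation}, Banach--Alaoglu for the weak-$*$ limits, Simon's compactness theorem for the strong $L^1$ and $L^2$ in time limits, and weak lower semicontinuity to pin down the regularity of $(\F,\G)$. The borderline $p=1$ case you flag is exactly what \cite[Corollary 4]{simon1986compact} is designed for and is the reference the paper invokes, so that worry is already settled.
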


\begin{proof}
\begin{itemize}
\item[i)] From \eqref{eq:BX} we obtain in particular the uniform bound of $(\F_k, \G_k)_k$ in 
\[\left(L^\infty\left(0,T;L^\infty(\TT)\right)\cap \mathcal{M}\left(0,T;W^{4,\infty}(\TT)\right)\right)^2.
\] By the Banach--Alaoglu theorem, we deduce that there exists a subsequence (not relabeled) such that
\[
	(\F_k, \G_k) \overset{*}{\warrow} (\F, \G)\mbox{ in } \left(L^\infty\left(0,T;L^\infty(\TT)\right)\right)^2,
\]
\[
(\pax^4\F_k, \pax^4\G_k) \overset{*}{\warrow} (\pax^4\F, \pax^4 \G) \mbox{ in } \left(\mathcal{M}(0,T;L^\infty(\TT))\right)^2.
\]
\item[ii)] Due \cite[Corollary 4]{simon1986compact}, \eqref{eq:BX}  together with \eqref{eq:BT} imply that
\[
	(\F_k, \G_k)_k \mbox{ is relatively compact in } \left(L^1\left(0,T; C^{3+\alpha}(\TT)\right)\right)^2.
\]
In particular, there exists a subsequence (not relabeled) such that
\[
	(\F_k, \G_k) \rightarrow (\F, \G) \mbox{ in } \left(L^1\left(0,T; C^{3+\alpha}(\TT)\right)\right)^2.
\]
\item[iii)] Notice first that $A^3(\TT)\subset W^{3,{\infty}}(\TT)$. Then the interpolation inequality in \eqref{interpolation} implies that
\[L^\infty\left(0,T; \dot A(\TT)\right)\cap L^1\left(0,T;\dot A^4(\TT)\right)\subset L^\frac{4}{3}\left(0,T;W^{3,{\infty}}(\TT)\right).\]
As in i), we deduce by the Banach--Alaoglu theorem that there exists a subsequence (not relabeled) such that
\[
(\partial_x^3\F_k, \partial_x^3 \G_k) \overset{*}{\warrow} (\partial_x^3\F, \partial_x^3\G)\mbox{ in } L^\frac{4}{3}\left(0,T;L^\infty(\TT)\right).
\]
\item[iv) ] Since $H^\alpha(\TT)\subset A^\alpha(\TT)$ for any $\alpha \in \RR^+$, the interpolation inequality for fractional Sobolev spaces implies that 
\[
L^\infty(0,T; \dot A(\TT))\cap L^1\left(0,T;\dot A^4(\TT)\right) \subset L^2\left(0,T;\dot H^2(\TT)\right).
\]
In particular we deduce that
$$
(\F_k,\G_k)_k \mbox{ is uniformely bounded in }   \left(L^2\left(0,T;\dot{H}^2(\TT)\right)\right)^2.
$$
By the Eberlein--\v{S}mulian theorem, there exists a weakly convergent subsequence (not relabeled) such that
\[
	(\F_k,\G_k) \warrow (\F,\G) \mbox{ in } \left(L^2\left(0,T;\dot H^2(\TT)\right)\right)^2.
\]
Again, by \cite[Corollary 4]{simon1986compact} together with \eqref{eq:BT} we obtain that
\[
	(\F_k,\G_k)_ k \mbox{ is relatively compact in } \left(L^2\left(0,T;C^{1+\alpha}(\TT)\right)\right)^2
\]
for any $\alpha \in [0,\frac{1}{2})$. Hence, there exists a subsequence (not relabeled) such that 
\[
	(\F_k,\G_k) \rightarrow (\F,\G) \mbox{ in } \left(L^2\left(0,T;C^{1+\alpha}(\TT)\right)\right)^2.
\]

\end{itemize}
Notice that due to the weak and weak-* convergences stated before, we can identify the limit function $(\F, \G)$ to belong to the space
\begin{equation*}
\left( L^\infty\left(0,T;L^\infty(\TT)\right)\cap \mathcal{M}\left(0,T;W^{4,\infty}(\TT)\right)\cap L^\frac{4}{3}\left(0,T;W^{3,{\infty}}(\TT)\right)\right)^2.
\end{equation*} 
 Together with the strong convergence in ii), the stated regularity for the limit function is obtained.
\end{proof}

%
%

Equipped with the convergences in Lemma \ref{lem:convergences}, we can pass to the limit in the weak formulation of \eqref{eq:system3reg}.

\subsection{Exponential decay towards the equilibrium} 
Since the Galerkin approximation satisfies the energy inequality in Lemma \ref{lem:EE}, we can use the Poincar\'e-like inequality for zero-mean functions in Wiener spaces and deduce that
\[
	\frac{d}{dt}\mathcal{E}_0(\F_k,\G_k)(t)\leq -\delta_A \mathcal{E}_4(\F_k,\G_k)(t)-\delta_b \mathcal{E}_2(\F_k,\G_k)(t) \leq - (\delta_A + \delta_b)\mathcal{E}_0(\F,\G)(t),
\]
which implies the exponential decay
\[
	\mathcal{E}_0(\F_k,\G_k)(t)\leq 	\mathcal{E}_0(\F_0,\G_0)e^{-(\delta_A + \delta_b)t}.
\]

Due to the fact that the weak-* convergence in Banach spaces is lower semi-continuous, Lemma \ref{lem:EE} i) implies that for almost all $T>0$:
\begin{equation*}
	\mathscr{E}_0(\F,\G)(t)\leq \liminf_{k\to \infty}\mathcal{E}_0(\F_k,\G_k)(t)\leq 	\mathcal{E}_0(\F_0,\G_0)e^{-(\delta_A + \delta_b)t},
\end{equation*}
which proves the decay assertion in Theorem \ref{theorem1} b).

\begin{remark}
We see that the presents of surface tension effects improves the estimate for the exponential decay.
\end{remark}

\subsection{Uniqueness}
The uniqueness of the solution to \eqref{eq:system3} in the class of 
$$
L^1\left(0,T;\dot{A}^4(\TT)\right),
$$
can be obtained by a standard contradiction argument. For the sake of brevity we only give a sketch of the proof. First we assume that there exist two different couples of solutions $(\F_1,\G_1)$ and $(\F_2,\G_2)$. Notice that then $(\F_i,\G_i)\in \left( W^{1,1}\left(0,T; \dot A(\TT)\right) \right)^2$, $i=1,2$, and we can estimate $\mathcal{E}_0(\F_1-\F_2,\G_1-\G_2)$ as in the previous section using the smallness assumption on the initial datum $(\F_0,\G_0)$. Eventually, we arrive at the inequality
$$
\frac{d}{dt}\mathcal{E}_0(\F_1-\F_2,\G_1-\G_2)\leq c\mathcal{E}_0(\F_1-\F_2,\G_1-\G_2)\left(\mathcal{E}_4(\F_1,\G_1)+\mathcal{E}_4(\F_2,\G_2)\right),
$$ 
where $c>0$ is a constant.
The statement is then a consequence of the Gronwall inequality and the fact that
$$
(\F_1(0),\G_1(0))=(\F_2(0),\G_2(0))=(\F_0,\G_0).
$$

\bigskip

\section{Existence and decay for the gravity driven thin film Muskat system in the Wiener algebra}
When surface tension effects are neglected, system \eqref{eq:system3} reads
\begin{align*}
\partial_t \F &= \langle f_0 \rangle \left[b_\rho \partial_x^2 \F + b\partial_x^2 \G \right]+N_{1,b}, \\
\partial_t \G&=\langle g_0 \rangle \left[  b_\mu \partial_x^2 \F + b_\mu\partial_x^2 \G \right]+N_{2,b},
\end{align*}
where the nonlinear terms $N_{i,b}$, $i=1,2$, are given in \eqref{N1} and \eqref{N3}. Repeating the arguments used for the the capillary driven flow in the previous section, we obtain that \eqref{ine} is replaced by the inequality
\begin{align*} 
\begin{split}
\frac{d}{dt}\mathcal{E}_0(\F,\G)(t)&\leq - \left[ \langle f_0 \rangle b_\rho - \langle g_0 \rangle b_\mu - \mathcal{E}_0(\F,\G)(2b_\rho+2b+4b_\mu)\right]\|\F\|_{\dot {A}^2} \\
&\quad-\left[ \langle g_0 \rangle b_\mu - \langle f_0 \rangle b - \mathcal{E}_0(\F,\G)(2b_\rho+2b+4b_\mu)\right] \|\G\|_{\dot{A}^2}.
\end{split}
\end{align*}
From this point on, the proof follows by the same techniques as in the previous section.

\bigskip

\section{Existence and decay for the capillary driven thin film Muskat system in Sobolev spaces}\label{section6}

Clearly, the assumptions of Theorem \ref{theorem3} guarantee the existence of a global weak solution as in Theorem \ref{theorem1}. In this section, we show that under the condition that the initial datum belongs to the Sobolev space $\dot H^2(\TT)$ and satisfies additional size restrictions (in the Wiener algebra $A(\TT)$), we can propagate Sobolev regularity of the solution and energy estimates in Sobolev spaces.
Here, we only prove the a priori energy estimates in the Sobolev space 
\begin{equation*} 
L^\infty\left(0,T;\dot{H}^2(\TT)\right)\cap L^2\left(0,T;\dot{H}^4(\TT)\right),
\end{equation*}
  the rest of the proof being straightforward. 
 
 \medskip
 
 Let $(\F_k,\G_k)$ be a sequence of Galerkin approximations as in Theorem \ref{theorem1} corresponding to an initial datum $(\F_0,\G_0)$ (satisfying the conditions of Theorem \ref{theorem3}). We know that the sequence is uniformly (with respect to $k$) bounded in $\left( L^\infty\left(0,T;\dot A(\TT)\right)\cap L^1\left(0,T;\dot A^4(\TT)\right) \right)^2$ for all $0<T<\infty$ (cf. \eqref{eq:BX}).
 
 \medskip
 
Multiplying the first equation in \eqref{eq:system3reg} by $\pax^4 \F_{{k}}$, we find that
\begin{align}\label{eq:estimate_FH}
\begin{split}
&\frac{1}{2}\frac{d}{dt}\|\F_{{k}}\|_{\dot{H}^2}^2=\int_\TT \partial_x^4 \F_{{k}}\partial_t \F_{{k}} dx\\
&\leq- \langle f_0 \rangle A_\gamma\|\F_{{k}}\|_{\dot{H}^4}^2+\|\F_{{k}}\|_{\dot{H}^4} \langle f_0 \rangle A \|\G_{{k}}\|_{\dot{H}^4} - \langle f_0 \rangle b_\rho \|\F_{{k}}\|_{\dot{H}^3}^2+ \langle f_0 \rangle b\|\G_{{k}}\|_{\dot{H}^4}\|\F_{{k}}\|_{\dot{H}^2}\\
&\quad+\int_\TT \pax^4\F_{{k}} {P_k}\left[-\partial_x \left[ \F_{{k}} \left( A_\gamma \partial_x^3 \F_{{k}} + A \partial_x^3 \G_{{k}} \right)\right]+ \partial_x \left[ \F_{{k}} \left(b_\rho \partial_x \F_{{k}} + b \partial_x \G_{{k}}\right) \right]\right]dx.
\end{split}
\end{align}
Then, using
$$
\|h\|_{\dot{H}^4}\leq  \sqrt{2\pi}\|h\|_{\dot{W}^{4,\infty}}\leq \sqrt{2\pi}\|h\|_{\dot{A}^4}
$$
and the properties of the solution described in Theorem \ref{theorem1}, we have that \eqref{eq:estimate_FH}  can be estimated as
\begin{align*}
\frac{1}{2}\frac{d}{dt}\|\F_{{k}}\|_{\dot{H}^2}^2&\leq- \langle f_0 \rangle A_\gamma\|\F_{{k}}\|_{\dot{H}^4}^2+\|\F_{{k}}\|_{\dot{H}^4} \langle f_0 \rangle A \|\G_{{k}}\|_{\dot{H}^4} - \langle f_0 \rangle b_\rho \|\F_{{k}}\|_{\dot{H}^3}^2+ c\mathscr{E}_4(\F_{{k}},\G_{{k}})\sqrt{E_2(\F_{{k}},\G_{{k}})}\\
&\quad+\int_\TT \pax^4\F_{{k}} {P_k}\left[-\partial_x \left[ \F_{{k}} \left( A_\gamma \partial_x^3 \F_{{k}} + A \partial_x^3 \G_{{k}} \right)\right]\right]dx,
\end{align*}
where $c=c(\F_0,\G_0,\mu_{\pm}, \rho_{\pm}, \gamma_f,\gamma_h)>0.$  Integrating by parts appropriately, and using the definitions of \eqref{notation1}, \eqref{notation1b} and \eqref{notation1c}, we find that
\begin{align}
&\int_\TT \pax^4\F_{{k}} {P_k}\left[\partial_x \left[ \F_{{k}}\left( A_\gamma \partial_x^3 \F_{{k}} + A \partial_x^3 \G_{{k}} \right)\right]\right] dx
\leq A_\gamma \|\F_{{k}}\|_{L^\infty} \|\F_{{k}}\|_{\dot H^4}^2 + A \|\F_{{k}}\|_{L^\infty} \|\F_{{k}}\|_{\dot H^4}\|\G_{{k}}\|_{\dot H^4} \nonumber \\
\label{eq:F1}
&\qquad\qquad\quad  +\frac{A_\gamma}{4}\mathscr{E}_4(\F_{{k}},\G_{{k}})E_2(\F_{{k}},\G_{{k}}) + A \int_{\TT}\partial_x \F_{{k}} \partial_x^4 \F_{{k}} \partial_x^3 \G_{{k}} \,dx\\
&\qquad\qquad\leq  \sqrt{2}A_\gamma  \|\F_{{k}}\|_{\dot H^4}\mathscr{E}_0(\F_{{k}},\G_{{k}})\sqrt{E_4(\F_{{k}}, \G_{{k}})}  +\frac{A_\gamma}{4}\mathscr{E}_4(\F_{{k}},\G_{{k}})E_2(\F_{{k}},\G_{{k}})\nonumber\\
&\qquad\qquad\quad + A \int_{\TT}\partial_x \F_{{k}} {P_k}(\partial_x^4 \F_{{k}} \partial_x^3 \G_{{k}}) \,dx.\nonumber
\end{align}

We recall the Kolmogorov-Landau inequality 
\begin{equation}\label{kolmogorov-landau}
	\|\partial_xh\|_{L^\infty}^2 \leq 2 \|h\|_{L^\infty}\|\partial_x^2h\|_{L^\infty}.
\end{equation}
Applying the latter twice yields
\[
	\|\partial_x^2h\|_{L^\infty} \leq 4 \|h\|_{L^\infty}^\frac{1}{2}\|\partial_x^4h\|_{L^\infty}^\frac{1}{2}.
\]
The above inequality is going to  be useful when estimating the last integral in \eqref{eq:F1}.
This integral is delicate because of the lack of integration by parts procedure. Instead, we use that
\[
	\|\partial_x h\|_{L^4}^2\leq 3 \|h\|_{{L^\infty}}\|h\|_{\dot H^2} \qquad \mbox{and} \qquad \|\partial_x^3 h\|_{L^4}^2 \leq 3\|\partial_x^2h\|_{{L^\infty}} \|h \|_{\dot H^4}
\]
together with the interpolation inequality for Sobolev spaces to estimate the remaining integral as
\begin{align}\label{eq:estimatecross}
\begin{split}
\left|\int_\TT \partial_x  \F{P_k}(\pax^4 \F   \partial_x^3 \G) dx\right|&\leq \|\F_{{k}} \|_{\dot{H}^{4}}\|\pax \F_{{k}}\|_{L^4}\|\pax^3\G_{{k}}\|_{L^4}\\
&\leq 3\|\F_{{k}}\|_{\dot{H}^{4}}\sqrt{\|\F_{{k}}\|_{L^\infty}\|\F_{{k}}\|_{\dot{H}^{2}}}\sqrt{\|\pax^2\G_{{k}}\|_{L^\infty}\|\G_{{k}}\|_{\dot{H}^{4}}}\\
&\leq 12\|\F_{{k}}\|_{\dot{H}^{4}}\sqrt[4]{E_4(\F_{{k}},\G_{{k}})}\sqrt{\mathscr{E}_0(\F_{{k}},\G_{{k}})\|\F_{{k}}\|_{\dot{H}^{2}}}\sqrt[4]{\mathscr{E}_0(\F_{{k}},\G_{{k}})\mathscr{E}_4(\F_{{k}},\G_{{k}})}.
\end{split}
\end{align}
Collecting all the previous estimates, we find that
\begin{align}\label{eq:estimate_FH2}
\begin{split}
\frac{1}{2}\frac{d}{dt}\|\F_{{k}}\|_{\dot{H}^2}^2&\leq- \langle f_0 \rangle A_\gamma\|\F_{{k}}\|_{\dot{H}^4}^2+\|\F_{{k}}\|_{\dot{H}^4} \langle f_0 \rangle A \|\G_{{k}}\|_{\dot{H}^4} - \langle f_0 \rangle b_\rho \|\F_{{k}}\|_{\dot{H}^3}^2+ c\mathscr{E}_4(\F_{{k}},\G_{{k}})\sqrt{E_2(\F_{{k}},\G_{{k}})}\\
&\quad+\sqrt{2}A_\gamma\mathscr{E}_0(\F_{{k}},\G_{{k}})\|\F_{{k}}\|_{\dot{H}^4}\sqrt{E_4(\F_{{k}},\G_{{k}})}+\frac{A_\gamma}{4}\mathscr{E}_4(\F_{{k}},\G_{{k}})E_2(\F_{{k}},\G_{{k}})\\
&\quad+{12}A\|\F_{{k}}\|_{\dot{H}^{4}}\sqrt[4]{E_4(\F_{{k}},\G_{{k}})}\sqrt{\mathscr{E}_0(\F_{{k}},\G_{{k}})\|\F_{{k}}\|_{\dot{H}^{2}}}\sqrt[4]{\mathscr{E}_0(\F_{{k}},\G_{{k}})\mathscr{E}_4(\F_{{k}},\G_{{k}})}.
\end{split}
\end{align}
Similarly, we multiply the second equation in \eqref{eq:system3} by $\pax^4 \G_{{k}}$ and we find that
\begin{align}\label{eq:estimate_GH}
\begin{split}
\frac{1}{2}\frac{d}{dt}\|\G_{{k}}\|_{\dot{H}^2}^2&=\langle g_0 \rangle A_\mu \|\F_{{k}}\|_{\dot{H}^4}\|\G_{{k}}\|_{\dot{H}^4} - \langle g_0 \rangle A_\mu \|\G_{{k}}\|_{\dot{H}^4}^2 + C\sqrt{E_2(\F_{{k}},\G_{{k}})}\mathscr{E}_4(\F_{{k}},\G_{{k}}) - \langle g_0 \rangle b_\mu\|\G_{{k}}\|_{\dot{H}^3}^2 \\
&\quad +{\sqrt{2}A_{\mu}\|\G_{{k}}\|_{\dot{H}^4}\mathscr{E}_0(\F_{{k}},\G_{{k}})\sqrt{E_4(\F_{{k}},\G_{{k}})}}+\frac{A_\gamma}{4}\mathscr{E}_4(\F_{{k}},\G_{{k}})E_2(\F_{{k}},\G_{{k}})\\
&\quad+{12}A_\mu \|\G_{{k}}\|_{\dot{H}^{4}}\sqrt[4]{E_4(\F_{{k}},\G_{{k}})}\sqrt{\mathscr{E}_0(\F_{{k}},\G_{{k}})\|\G_{{k}}\|_{\dot{H}^{2}}}\sqrt[4]{\mathscr{E}_0(\F_{{k}},\G_{{k}})\mathscr{E}_4(\F_{{k}},\G_{{k}})}.
\end{split}
\end{align}

Summing up \eqref{eq:estimate_FH2} and \eqref{eq:estimate_GH} and using $\sqrt{x}\leq 1+x$, we obtain that
\begin{align*}
\frac{1}{2}\frac{d}{dt}E_2(\F_{{k}},\G_{{k}})&\leq\|\F_{{k}}\|_{\dot{H}^4} \langle f_0 \rangle A \|\G_{{k}}\|_{\dot{H}^4}+\langle g_0 \rangle A_\mu \|\F_{{k}}\|_{\dot{H}^4}\|\G_{{k}}\|_{\dot{H}^4} - \langle f_0 \rangle A_\gamma\|\F_{{k}}\|_{\dot{H}^4}^2 - \langle g_0 \rangle A_\mu \|\G_{{k}}\|_{\dot{H}^4}^2\\
&\quad + c(E_2(\F_{{k}},\G_{{k}})+1)\mathscr{E}_4(\F_{{k}},\G_{{k}}) - \langle g_0 \rangle b_\mu\|\G_{{k}}\|_{\dot{H}^3}^2- \langle f_0 \rangle b_\rho \|\F_{{k}}\|_{\dot{H}^3}^2 \nonumber\\
&\quad +{\sqrt{2}A_{\mu}}\|\G_{{k}}\|_{\dot{H}^4}\mathscr{E}_0(\F_{{k}},\G_{{k}})\sqrt{E_4(\F_{{k}},\G_{{k}})}+{\sqrt{2}A_\gamma}\mathscr{E}_0(\F_{{k}},\G_{{k}})\|\F_{{k}}\|_{\dot{H}^4}\sqrt{E_4(\F_{{k}},\G_{{k}})}\nonumber\\
&\quad+{12}A_\mu \|\G_{{k}}\|_{\dot{H}^{4}}\sqrt[4]{E_4(\F_{{k}},\G_{{k}})}\sqrt{\mathscr{E}_0(\F_{{k}},\G_{{k}})\|\G_{{k}}\|_{\dot{H}^{2}}}\sqrt[4]{\mathscr{E}_0(\F_{{k}},\G_{{k}})\mathscr{E}_4(\F_{{k}},\G_{{k}})}\\
&\quad+{12}A\|\F_{{k}}\|_{\dot{H}^{4}}\sqrt[4]{E_4(\F_{{k}},\G_{{k}})}\sqrt{\mathscr{E}_0(\F_{{k}},\G_{{k}})\|\F_{{k}}\|_{\dot{H}^{2}}}\sqrt[4]{\mathscr{E}_0(\F_{{k}},\G_{{k}})\mathscr{E}_4(\F_{{k}},\G_{{k}})}.
\end{align*}
Young's inequality implies that

\begin{align}\label{eq:Y}
\begin{split}
K\|\G\|_{\dot{H}^{4}}\sqrt[4]{E_4(\F_k,\G_k)}\mathscr{E}_0(\F_k,\G_k)^{3/4}&\sqrt{\|\G_k\|_{\dot{H}^{2}}}\sqrt[4]{\mathscr{E}_4(\F_k,\G_k)}\\
&\leq\frac{3}{4}E_4(\F_k,\G_k)\mathscr{E}_0(\F_k,\G_k)+\frac{K^4}{4}\|\G_k\|_{\dot{H}^{2}}^2\mathscr{E}_4(\F_k,\G_k),\\
K\|\F_k\|_{\dot{H}^{4}}\sqrt[4]{E_4(\F_k,\G_k)}\mathscr{E}_0(\F_k,\G_k)^{3/4}&\sqrt{\|\F_k\|_{\dot{H}^{2}}}\sqrt[4]{\mathscr{E}_4(\F_k,\G_k)}\\
&\leq\frac{3}{4}E_4(\F_k,\G_k)\mathscr{E}_0(\F_k,\G_k)+\frac{K^4}{4}\|\F_k\|_{\dot{H}^{2}}^2\mathscr{E}_4(\F_k,\G_k)
\end{split}
\end{align}
for any positive constant $K$. We find that
\[
\|\F_{{k}}\|_{\dot{H}^4}  \|\G_{{k}}\|_{\dot{H}^4}(\langle f_0 \rangle A+\langle g_0 \rangle A_\mu)\leq \frac{E_4(\F_{{k}},\G_{{k}})}{2}(\langle f_0 \rangle A+\langle g_0 \rangle A_\mu),
\]
and
\[
\sqrt{2}({A_{\mu}}\|\G_{{k}}\|_{\dot{H}^4}+{A_\gamma}\|\F_{{k}}\|_{\dot{H}^4})\mathscr{E}_0(\F_{{k}},\G_{{k}})\sqrt{E_4(\F_{{k}},\G_{{k}})}\leq \sqrt{2}({A_\mu + A_\gamma})\mathscr{E}_0(\F_{{k}},\G_{{k}})E_4(\F_{{k}},\G_{{k}}).
\]
Eventually, we can further regroup terms and conclude that
\begin{align*}
\frac{1}{2}\frac{d}{dt}E_2(\F_{{k}},\G_{{k}})&\leq\frac{E_4(\F_{{k}},\G_{{k}})}{2}(\langle f_0 \rangle A+\langle g_0 \rangle A_\mu) - \langle f_0 \rangle A_\gamma\|\F_{{k}}\|_{\dot{H}^4}^2 - \langle g_0 \rangle A_\mu \|\G_{{k}}\|_{\dot{H}^4}^2\\
&\quad + {c\left(1+E_2(\F,\G)\right)}\mathscr{E}_4(\F_{{k}},\G_{{k}}) - \langle g_0 \rangle b_\mu\|\G_{{k}}\|_{\dot{H}^3}^2- \langle f_0 \rangle b_\rho \|\F_{{k}}\|_{\dot{H}^3}^2 \nonumber\\
&\quad +\left(\sqrt{2}A_\gamma +\sqrt{2} A_{\mu}+\frac{9}{4}A+\frac{9}{4}A_\mu\right)\mathscr{E}_0(\F_{{k}},\G_{{k}})E_4(\F_{{k}},\G_{{k}}).
\end{align*}
Using that
$$
\mathscr{E}_s(\F,\G)\leq \mathcal{E}_s(\F,\G),
$$
the additional assumptions in Theorem \ref{theorem3}, which are given by
$$
\langle g_0 \rangle A_\mu-\frac{\langle f_0 \rangle A+\langle g_0 \rangle A_\mu}{2}-\left(\sqrt{2}A_\gamma +\frac{9}{4}A+\left(\sqrt{2}+\frac{9}{4}\right)A_\mu\right)\mathcal{E}_0(\F_0,\G_0)>0,
$$
$$
\langle f_0 \rangle A_\gamma-\frac{\langle f_0 \rangle A+\langle g_0 \rangle A_\mu}{2}-\left(\sqrt{2}A_\gamma +\frac{9}{4}A+\left(\sqrt{2}+\frac{9}{4}\right)A_\mu\right)\mathcal{E}_0(\F_0,\G_0)>0,
$$
together with the decay of Theorem \ref{theorem1} imply the existence of $0<\delta_i\ll 1, i=1,2$ small enough such that
\begin{align*}
\frac{d}{dt}E_2(\F_{{k}},\G_{{k}})+\delta_1 E_4(\F_{{k}},\G_{{k}})+\delta_2 E_3(\F_{{k}},\G_{{k}})&\leq c(E_2(\F_{{k}},\G_{{k}})+1)\mathscr{E}_4(\F_{{k}},\G_{{k}}).
\end{align*}
{Due to Theorem \ref{theorem1}, we know that the Galerkin approximation $(\F_{{k}},\G_{{k}})$ satisfies
\[
(\F_{k},\G_{k})\in\left(L^1\left(0,T;\dot{A}^{4}(\TT)\right)\right)^2.
\]
In particular, $\mathscr{E}_4(\F_{{k}}(s),\G_{{k}}(s))$ is integrable and
\[
\int_0^T \mathscr{E}_4(\F_{{k}}(s),\G_{{k}}(s))ds\leq c\mathcal{E}_0(\F_0,\G_0) \qquad \mbox{for any}\quad T>0.
\]
}
Therefore, {using the Gronwall inequality,} we conclude that
\[
	E_2(\F_{{k}},\G_{{k}})(t)\leq E_2(\F_0,\G_0)\left(e^{ c\int_0^t \mathscr{E}_4(\F_{{k}}(s),\G_{{k}}(s))\,ds}-1\right)\leq  c E_2(\F_0,\G_0) \qquad \mbox{for any}\quad T>0
\]
 and we obtain the desired Sobolev regularity for the Galerkin approximation. The exponential decay in $H^s(\TT)$ for $0\leq s<2$ is guaranteed by standard Sobolev interpolation.
Thus,
\begin{align*} 
	&(\F_k, \G_k)_k \mbox{ is uniformely bounded in } \left( L^\infty\left(0,T;\dot H^2(\TT)\right)\cap L^2\left(0,T;\dot H^4(\TT)\right) \right)^2,
\end{align*}
\begin{align*} 
	&(\pat \F_k, \pat \G_k)_k \mbox{ is uniformely bounded in } \left(L^2\left(0,T; L^2(\TT)\right) \right)^2.
\end{align*}
 Then, a standard argument as in Lemma \ref{lem:convergences} guarantees that the there exists a subsequence of $(\F_k, \G_k)_k$ (not relabeled) satisfying
\begin{itemize}
\item[i)] $(\F_k, \G_k) \warrow (\F, \G)$ in $\left(L^\infty(0,T;\dot H^2(\TT))\right)^2$,
\item[ii)] $(\F_k, \G_k) \warrow (\F, \G)$ in $\left(L^2\left(0,T;\dot{H}^{4}(\TT)\right)\right)^2$,
\item[iii)] $(\F_k, \G_k) \rightarrow (\F, \G)$ in $\left(L^2\left(0,T;C^{3+\alpha}(\TT)\right)\cap C\left([0,T];C^{1+\alpha}(\TT)\right)\right)^2$ for $\alpha \in [0,\frac{1}{2})$,
\item[iv)] $(\partial_t \F_k, \partial_t \G_k)\warrow (\partial_t \F, \partial_t \G)$ in $\left(L^2\left(0,T; L^2(\TT)\right) \right)^2$.
\end{itemize}

Moreover, in view of the convergences above the limit function $(\F, \G)$ possesses the regularity
\begin{align*}
	(\F,\G)\in &\Big(C\left(0,T;\dot H^2(\TT)\right)\cap L^2\left(0,T;\dot H^{4}(\TT)\right)\Big)^2.
\end{align*}

Finally, we can pass to the limit as in Theorem \ref{theorem1}.

\bigskip

\section{Existence and decay for the gravity driven thin film Muskat system in Sobolev spaces}
The proof of Theorem \ref{theorem4} essentially follows the lines in the previous section. To obtain the a priori estimate in Sobolev spaces, we multiply the equation for $\F$ by $-\pax^2\F$ and integrate by parts to obtain the inequality
\begin{align*}
\frac{1}{2}\frac{d}{dt}\|\F\|_{\dot{H}^1}^2 &\leq -\langle f_0 \rangle b_\rho \|\F\|_{\dot{H}^2}^2 + \langle f_0 \rangle b\|\F\|_{\dot{H}^2}\|\G\|_{\dot{H}^2}+b_\rho \|\F\|_{\dot{H}^2}^2\|\F\|_{L^\infty}+\frac{5b}{2}\|\F\|_{L^\infty}\|\F\|_{\dot{H}^2}\|\G\|_{\dot{H}^2}.
\end{align*}
Analogously, multiplying the equation for $\G$ by $-\pax^2\G$ and integrating by parts yields that
\begin{align*}
\frac{1}{2}\frac{d}{dt}\|\G\|_{\dot{H}^1}^2 &\leq  \langle g_0\rangle b_\mu \|\F\|_{\dot{H}^2}\|\G\|_{\dot{H}^2}  - \langle g_0\rangle b_\mu \|\G\|_{\dot{H}^2}^2+b_\mu \|\G\|_{\dot{H}^2}^2\|\G\|_{L^\infty}+\frac{5b_\mu}{2}\|\G\|_{L^\infty}\|\F\|_{\dot{H}^2}\|\G\|_{\dot{H}^2}. 
\end{align*}
Taking the sum of both inequalities, we obtain that
\begin{align*}
\frac{1}{2}\frac{d}{dt}E_1(\F,\G) &\leq  \frac{1}{2}\left(\langle g_0\rangle b_\mu+\langle f_0 \rangle b\right)E_2(\F,\G) -\langle f_0 \rangle b_\rho \|\F\|_{\dot{H}^2}^2 - \langle g_0\rangle b_\mu \|\G\|_{\dot{H}^2}^2\\
&\quad+\left(b_\rho+b_\mu+\frac{5b_\mu}{2}+\frac{5b}{2}\right) E_2(\F,\G)\mathscr{E}_0(\F,\G). 
\end{align*}
From here on, we can conclude the statement using the previous ideas and Theorem \ref{theorem2}.

\bigskip

\section{Existence and decay for the thin film Stokes system in the Wiener algebra}
The proof of Theorem \ref{theorem5} is similar to the proof of the corresponding results for the two-phase thin film Muskat problem in Theorem \ref{theorem1} and Theorem \ref{theorem2}. Therefore, we only provide the energy estimates. Recalling the definition of $\zeta$ from the statement of Theorem \ref{theorem5}, we compute
\begin{equation}\label{eq:estiamte_F_S}
\begin{split}
\frac{d}{dt}\|\F\|_{\dot A}&\leq -\left(2\rho \langle f_0 \rangle^3 +3\langle f_0 \rangle^2\langle g_0 \rangle\right)\|\F\|_{\dot{A}^{\zeta+1}}+(2\langle f_0 \rangle^3+3\langle f_0 \rangle^2\langle g_0 \rangle) \|\G\|_{\dot{A}^{\zeta+1}}\\
&\qquad + \|N_1\|_{\dot A} + \|N_2\|_{\dot A}
\end{split}
\end{equation}
and
\begin{align*} 
\begin{split}
\frac{d}{dt}\|\G\|_{\dot A}&\leq\left(2\mu \langle g_0 \rangle^3+3\rho \langle f_0 \rangle^2\langle g_0 \rangle+6\langle f_0 \rangle \langle g_0 \rangle^2\right) \|\F\|_{\dot{A}^{\zeta+1}} \\
&\qquad - \left(2\mu \langle g_0 \rangle^3 + 3 \langle f_0 \rangle^2\langle g_0 \rangle + 6\langle f_0 \rangle \langle g_0 \rangle^2 \right) \|\G\|_{\dot{A}^{\zeta+1}}+\|N_3\|_{\dot A}+\|N_4\|_{\dot A}. 
\end{split}
\end{align*}
The nonlinear terms $N_i$, $i=1\ldots 4$, can be estimated as follows:
\begin{align*} 
\begin{split}
\|N_1\|_{\dot A}&\leq \mathcal{E}_0(\F,\G)\bigg{[}(3+2\rho)\mathcal{E}_0(\F,\G)^2 +  \mathcal{E}_0(\F,\G)(6(1+\rho)\langle f_0 \rangle + {3} \langle g_0 \rangle)\\
&\quad +  (3+6\rho)\langle f_0 \rangle^2 +6\langle f_0 \rangle \langle g_0 \rangle\bigg{]}\|\F\|_{\dot A^{\zeta+1}}\\
&\quad+\mathcal{E}_0(\F,\G)\bigg{[}(6\rho+9)\mathcal{E}_0(\F,\G)^2+(\langle f_0 \rangle(12\rho+12)+6\langle g_0 \rangle) \mathcal{E}_0(\F,\G)\\
&\quad+6\langle f_0 \rangle \langle g_0\rangle+(3+6\rho)\langle f_0 \rangle^2\bigg{]}\mathcal{E}_{\zeta+1}(\F,\G),
\end{split}
\end{align*}
\begin{align} \label{N2stokesest}
\begin{split}
\|N_2\|_{\dot A}&\leq \mathcal{E}_0(\F,\G)\bigg{[}5\mathcal{E}_0(\F,\G)^2 +  \mathcal{E}_0(\F,\G)(12\langle f_0 \rangle +  {3}\langle g_0 \rangle)+ 9\langle f_0 \rangle^2 +6\langle f_0 \rangle \langle g_0 \rangle\bigg{]}\|\G\|_{\dot A^{\zeta+1}}\\
&+\mathcal{E}_0(\F,\G)\bigg{[}15\mathcal{E}_0(\F,\G)^2+(24\langle f_0 \rangle+6\langle g_0 \rangle) \mathcal{E}_0(\F,\G)+6\langle f_0 \rangle \langle g_0\rangle+9\langle f_0 \rangle^2\bigg{]}\mathcal{E}_{\zeta+1}(\F,\G),
\end{split}
\end{align}
\begin{align}\label{N3stokesest}
\begin{split}
\|N_3\|_{\dot A}&\leq \mathcal{E}_0(\F,\G)\bigg{[}\left(2\mu+3\rho+6 \right)\mathcal{E}_0(\F,\G)^2 + \left((6\mu+3\rho+12)\langle g_0 \rangle+6(\rho+1)\langle f_0 \rangle\right)\mathcal{E}_0(\F,\G) \\
&\quad +6\mu \langle g_0\rangle^2 + 3\rho(\langle f_0 \rangle^2 +2 \langle f_0 \rangle \langle g_0 \rangle)  + 6( \langle g_0 \rangle^2 +2 \langle g_0 \rangle \langle f_0 \rangle)\bigg{]}\|\F\|_{\dot{A}^{\zeta+1}}\\
&\quad+\mathcal{E}_0(\F,\G)\bigg{[}\left(6\mu+9\rho+18 \right)\mathcal{E}_0(\F,\G)^2 + 6\mu \langle g_0\rangle^2 + 3\rho(\langle f_0 \rangle^2 +2 \langle f_0 \rangle \langle g_0 \rangle)  + 6 \langle g_0 \rangle^2 \\
&\quad +{12} \langle g_0 \rangle \langle f_0 \rangle+\left((12\mu+6\rho+24)\langle g_0 \rangle+12\rho\langle f_0 \rangle+12\langle f_0 \rangle\right)\mathcal{E}_0(\F,\G)\bigg{]}\mathcal{E}_{\zeta+1}(\F,\G),
\end{split}
\end{align}
\begin{align}\label{N4stokesest}
\begin{split}
\|N_4\|_{\dot A}&\leq \mathcal{E}_0(\F,\G)\bigg{[}\left(2\mu+9 \right)\mathcal{E}_0(\F,\G)^2 + \left((6\mu+15)\langle g_0 \rangle+12\langle f_0 \rangle\right)\mathcal{E}_0(\F,\G) \\
&\qquad +6\mu \langle g_0\rangle^2 + 3(\langle f_0 \rangle^2 +2 \langle f_0 \rangle \langle g_0 \rangle)  + 6( \langle g_0 \rangle^2 +2 \langle g_0 \rangle \langle f_0 \rangle)\bigg{]}\|\G\|_{\dot{A}^{\zeta+1}}\\
&\qquad+\mathcal{E}_0(\F,\G)\bigg{[}\left(6\mu+27 \right)\mathcal{E}_0(\F,\G)^2 + 6\mu \langle g_0\rangle^2 + 3(\langle f_0 \rangle^2 +2 \langle f_0 \rangle \langle g_0 \rangle)  + 6\langle g_0 \rangle^2 \\
&\qquad +{12} \langle g_0 \rangle \langle f_0 \rangle+\left((12\mu+30)\langle g_0 \rangle+24\langle f_0 \rangle\right)\mathcal{E}_0(\F,\G)\bigg{]}\mathcal{E}_{\zeta+1}(\F,\G).
\end{split}
\end{align}
Collecting the above inequalities \eqref{eq:estiamte_F_S}-\eqref{N4stokesest}, we obtain that
\begin{align*}
\frac{d}{dt}\mathcal{E}_0(\F,\G)\leq -\Sigma_1(t)\|\F\|_{\dot{A}^{\zeta+1}}-\Sigma_2(t)\|\G\|_{\dot{A}^{\zeta+1}},
\end{align*}
where
\begin{align*}
\Sigma_1(t)&=2\rho \langle f_0 \rangle^3 +3\langle f_0 \rangle^2\langle g_0 \rangle(1-\rho)-\left(2\mu \langle g_0 \rangle^3+6\langle f_0 \rangle \langle g_0 \rangle^2\right)\\
&\quad-\mathcal{E}_0(\F,\G)\bigg{[}({78}+20\rho+14\mu)\mathcal{E}_0(\F,\G)^2+(\langle f_0 \rangle(36\rho+84)+({81}+30\mu+{9}\rho)\langle g_0 \rangle)\mathcal{E}_0(\F,\G)\bigg{]}\\
&\quad-\mathcal{E}_0(\F,\G)\bigg{[}(18\mu+{18}) \langle g_0\rangle^2 + ({18\rho}+ {18})\langle f_0 \rangle^2 +({12\rho}+{60}) \langle g_0 \rangle \langle f_0 \rangle\bigg{]},
\end{align*}
\begin{align*}
\Sigma_2(t)&=2\mu \langle g_0 \rangle^3  + 6\langle f_0 \rangle \langle g_0 \rangle^2-2\langle f_0 \rangle^3 \\
&\quad-\mathcal{E}_0(\F,\G)\bigg{[}(14\mu+15\rho+83)\mathcal{E}_0(\F,\G)^2+\left((30\mu+6\rho+{84})\langle g_0 \rangle+(96+24\rho)\langle f_0 \rangle\right)\mathcal{E}_0(\F,\G)\bigg{]}\\
&\quad-\mathcal{E}_0(\F,\G)\bigg{[}(18\mu+18) \langle g_0\rangle^2 + (27+9\rho)\langle f_0 \rangle^2 +({6\rho}+{66}) \langle f_0 \rangle \langle g_0 \rangle\bigg{]}\,.
\end{align*}
Consequently, under the hypotheses of the theorem, there exists a constant $0<\varepsilon\ll1$ such that
$$
\frac{d}{dt}\mathcal{E}_0(\F,\G)+\varepsilon\mathcal{E}_{\zeta+1}(\F,\G) \leq 0.
$$
 The rest of the proof follows similarly as in Theorem \ref{theorem1}.

\bigskip

\section{Existence and decay for the thin film Stokes system in Sobolev spaces}\label{section9}
The proof of Theorem \ref{theorem6} is similar to the proof of Theorem \ref{theorem3}. To obtain the desired energy estimates, we multiply the equation for $\F$ in \eqref{eq:system_Stokesfinal2} by $-\pax\mathscr{D} \F$ and integrate by parts. We find that
\begin{align*}
\frac{1}{2}\frac{d}{dt}\|\F\|_{\dot{H}^{(\zeta+1)/2}}^2&\leq -\left(2\rho \langle f_0 \rangle^3 +3\langle f_0 \rangle^2\langle g_0 \rangle\right)\|\F\|_{\dot{H}^{\zeta+1}}^2 + (2\langle f_0 \rangle^3+3\langle f_0 \rangle^2\langle g_0 \rangle)\|\F\|_{\dot{H}^{\zeta+1}}\|\G\|_{\dot{H}^{\zeta+1}}\\
&\quad -\int_\TT N_1\pax\mathscr{D} \F dx-\int_\TT N_2\pax\mathscr{D} \F dx, 
\end{align*}
where
\begin{align*}
\int_\TT N_1\pax\mathscr{D} \F dx=K_1+K_2+K_3 \qquad \mbox{and}\qquad 
\int_\TT N_2\pax\mathscr{D} \F dx=K_4+K_5+K_6,
\end{align*}
with with $K_1,\ldots, K_6$ being defined as:
\begin{align*}
K_1&:=\int_\TT 3(\F^2 \G + 2 \F \G \langle f_0 \rangle + \F^2 \langle g_0 \rangle + \G \langle f_0 \rangle^2 +2\F \langle f_0 \rangle \langle g_0 \rangle)(\pax\mathscr{D} \F)^2dx\nonumber\\
&\quad+\int_\TT 2\rho (\F^3 + 3 \F^2 \langle f_0\rangle + 3 \F \langle f_0 \rangle^2)(\pax\mathscr{D} \F)^2 dx,\\
K_2&:=-\frac{1}{2}\int_\TT\partial_x^2 \left[3(\F^2 \G + 2 \F \G \langle f_0 \rangle + \F^2 \langle g_0 \rangle + \G \langle f_0 \rangle^2 +2\F \langle f_0 \rangle \langle g_0 \rangle)\right](\mathscr{D} \F)^2dx,\\
K_3&:=-\frac{1}{2}\int_\TT\partial_x^2 \left[ 2\rho (\F^3 + 3 \F^2 \langle f_0\rangle + 3 \F \langle f_0 \rangle^2)\right](\mathscr{D} \F)^2 dx,\\
K_4&:=\int_\TT 3(\F^2 \G + 2 \F \G \langle f_0 \rangle + \F^2 \langle g_0 \rangle + \G \langle f_0 \rangle^2 +2\F \langle f_0 \rangle \langle g_0 \rangle) \partial_x\mathscr{D} \G  \partial_x\mathscr{D} \F dx\nonumber\\
&\quad+ \int_\TT 2(\F^3 + 3 \F^2 \langle f_0\rangle + 3 \F \langle f_0 \rangle^2)\pax \mathscr{D} \G  \partial_x\mathscr{D} \F dx,\\
K_5&:=\int_\TT\partial_x \left[3(\F^2 \G + 2 \F \G \langle f_0 \rangle + \F^2 \langle g_0 \rangle + \G \langle f_0 \rangle^2 +2\F \langle f_0 \rangle \langle g_0 \rangle)\right]\mathscr{D} \G\pax \mathscr{D} \F dx,\\
K_6&:=\int_\TT\partial_x \left[ 2 (\F^3 + 3 \F^2 \langle f_0\rangle + 3 \F \langle f_0 \rangle^2)\right] \mathscr{D} \G\pax \mathscr{D} \F dx.
\end{align*}
Similarly,
\begin{align*}
\frac{1}{2}\frac{d}{dt}\|\G\|_{\dot{H}^{(\zeta+1)/2}}^2&\leq -\left(2\mu \langle g_0 \rangle^3 + 3 \langle f_0 \rangle^2\langle g_0 \rangle + 6\langle f_0 \rangle \langle g_0 \rangle^2 \right)\|\G\|_{\dot{H}^{\zeta+1}}^2\\
&\quad + \left(2\mu \langle g_0 \rangle^3+3\rho \langle f_0 \rangle^2\langle g_0 \rangle+6\langle f_0 \rangle \langle g_0 \rangle^2\right)\|\F\|_{\dot{H}^{\zeta+1}}\|\G\|_{\dot{H}^{\zeta+1}}\nonumber\\
&\quad -\int_\TT N_3\pax\mathscr{D} \G dx-\int_\TT N_4\pax\mathscr{D} \G dx, 
\end{align*}
where
\begin{align*}
\int_\TT N_3\pax\mathscr{D} \G dx=K_7+K_8+K_9\qquad \mbox{and}\qquad 
\int_\TT N_4\pax\mathscr{D} \G dx=K_{10}+K_{11}+K_{12},
\end{align*}
with $K_7,\ldots, K_{12}$ being defined as: 
\begin{align*}
K_7&:=\int_{\TT}\left(2\mu (\G^3 + 3\G^2 \langle g_0 \rangle +3 \G \langle g_0\rangle^2) + 3\rho(\F^2 \G + 2 \F \G \langle f_0 \rangle + \F^2 \langle g_0 \rangle + \G \langle f_0 \rangle^2 \right. \\
&\left. \qquad \qquad \qquad+2\F \langle f_0 \rangle \langle g_0 \rangle)\right) \partial_x \mathscr{D}\F\partial_x \mathscr{D}\G\,dx\\
&\qquad \quad+\int_{\TT}\left(6(\G^2 \F + 2 \G \F \langle g_0 \rangle + \G^2 \langle f_0 \rangle + \F \langle g_0 \rangle^2 +2\G \langle g_0 \rangle \langle f_0 \rangle)\right)\partial_x \mathscr{D}\F\partial_x \mathscr{D}\G\,dx,\\
K_8&:=\int_{\TT}\partial_x\left(2\mu (\G^3 + 3\G^2 \langle g_0 \rangle +3 \G \langle g_0\rangle^2) + 3\rho(\F^2 \G + 2 \F \G \langle f_0 \rangle + \F^2 \langle g_0 \rangle + \G \langle f_0 \rangle^2 \right. \\
&\left. \qquad \qquad \qquad+2\F \langle f_0 \rangle \langle g_0 \rangle)\right) \mathscr{D}\F\partial_x \mathscr{D}\G\,dx,\\
K_9&:=6 \int_{\TT}\partial_x\left(\G^2 \F + 2 \G \F \langle g_0 \rangle + \G^2 \langle f_0 \rangle + \F \langle g_0 \rangle^2 +2\G \langle g_0 \rangle \langle f_0 \rangle\right)\mathscr{D}\F\partial_x \mathscr{D}\G\,dx,\\
K_{10}&:=\int_{\TT}\left(2\mu (\G^3 + 3\G^2 \langle g_0 \rangle +3 \G \langle g_0\rangle^2) + 3(\F^2 \G + 2 \F \G \langle f_0 \rangle + \F^2 \langle g_0 \rangle + \G \langle f_0 \rangle^2 +2\F \langle f_0 \rangle \langle g_0 \rangle) \right.\\
&\qquad \quad\left.+ 6(\G^2 \F + 2 \G \F \langle g_0 \rangle + \G^2 \langle f_0 \rangle + \F \langle g_0 \rangle^2 +2\G \langle g_0 \rangle \langle f_0 \rangle)\right)(\partial_x \mathscr{D}\G)^2\,dx,\\
K_{11}&:=-\frac{1}{2}\int_{\TT}\partial_x^2\left(2\mu (\G^3 + 3\G^2 \langle g_0 \rangle +3 \G \langle g_0\rangle^2) + 3(\F^2 \G + 2 \F \G \langle f_0 \rangle + \F^2 \langle g_0 \rangle + \G \langle f_0 \rangle^2 \right.\\
&\left. \qquad \qquad \qquad+2\F \langle f_0 \rangle \langle g_0 \rangle\right) (\mathscr{D}\G)^2\,dx,\\
K_{12}&:=-3\int_{\TT}\partial_x^2 \left(\G^2 \F + 2 \G \F \langle g_0 \rangle + \G^2 \langle f_0 \rangle + \F \langle g_0 \rangle^2 +2\G \langle g_0 \rangle \langle f_0 \rangle\right)(\mathscr{D}\G)^2\,dx.
\end{align*}

\medskip

\textbf{Capillary driven flow:} Let us consider the case $\mathscr{D}=-\partial_x^3$ first. We estimate
\begin{align*}
K_1&\leq 3(\mathscr{E}_0(\F,\G)^3 +  \mathscr{E}_0(\F,\G)^2 \left[{2}\langle f_0 \rangle + \langle g_0 \rangle\right] + \mathscr{E}_0(\F,\G) \left[\langle f_0 \rangle^2 +2 \langle f_0 \rangle \langle g_0 \rangle\right])\|\F\|_{\dot{H}^{4}}^2\\
&\quad+\left[2\rho (\mathscr{E}_0(\F,\G)^3 + 3 \mathscr{E}_0(\F,\G)^2 \langle f_0\rangle + 3 \mathscr{E}_0(\F,\G) \langle f_0 \rangle^2)\right]\|\F\|_{\dot{H}^{4}}^2.
\end{align*}
A first estimate on $K_2$ yields
\begin{align*}
K_2&\leq\frac{3}{2}\|\pax^3 \F\|_{L^4}^2\bigg{[}\mathscr{E}_0(\F,\G)^2(2\|\F\|_{\dot{H}^2}+\|\G\|_{\dot{H}^2}) + 2 \langle f_0 \rangle \mathscr{E}_0(\F,\G)(\|\F\|_{\dot{H}^2}+\|\G\|_{\dot{H}^2})\\
&\quad\qquad \qquad \qquad + 2\mathscr{E}_0(\F,\G)\|\F\|_{\dot{H}^2} \langle g_0 \rangle + \|\G\|_{\dot{H}^2} \langle f_0 \rangle^2 +2\|\F\|_{\dot{H}^2} \langle f_0 \rangle \langle g_0 \rangle\bigg{]}\\
&\quad+{\frac{3}{2}}\|\pax^3 \F\|_{L^4}^2\bigg{[}\mathscr{E}_0(\F,\G)({4}\|\pax \F\|_{L^4}^2+{2}\|\pax \G\|_{L^4}^2) + \left( {2}\|\pax \G\|_{L^4}^2\langle f_0 \rangle + {2}\|\pax \F\|_{L^4}^2 \left(\langle g_0 \rangle + \langle f_0 \rangle\right)\right) \bigg{]}.
\end{align*}

Recalling that
\begin{equation*} 
\|\pax^3 h\|_{L_4}^2\leq 3\|\partial_x^4h\|_{L^\infty}\|h\|_{\dot H^2}
\end{equation*}
and
\begin{equation*} 
\|\pax h\|_{L_4}^2\leq 3\|h\|_{L^\infty}\|h\|_{\dot H^2},
\end{equation*}
the term $K_2$ can further be estimated by
\[
	K_2 \leq cE_2(\F,\G)\mathscr{E}_4(\F,\G),
\]
where $c=c(\mathscr{E}_0(\F_0,\G_0), \langle f_0 \rangle, \langle g_0 \rangle)$ is a positive constant. 
In similar fashion $K_3$ can be estimated by
\[
	K_3 \leq cE_2(\F,\G)\mathscr{E}_4(\F,\G).
\]
%
%
%
Collecting these estimates, we have that
\begin{align*}
\left|\int_\TT N_1\pax\mathscr{D} \F dx \right|&\leq 3(\mathscr{E}_0(\F,\G)^3 +  \mathscr{E}_0(\F,\G)^2 \left[{2}\langle f_0 \rangle + \langle g_0 \rangle\right] + \mathscr{E}_0(\F,\G) \left[\langle f_0 \rangle^2 +2 \langle f_0 \rangle \langle g_0 \rangle\right])\|\F\|_{\dot{H}^{4}}^2\\
&\quad+\left[2\rho (\mathscr{E}_0(\F,\G)^3 + 3 \mathscr{E}_0(\F,\G)^2 \langle f_0\rangle + 3 \mathscr{E}_0(\F,\G) \langle f_0 \rangle^2)\right]\|\F\|_{\dot{H}^{4}}^2\\
&\quad+cE_2(\F,\G)\mathscr{E}_4(\F,\G). 
\end{align*}
We continue by estimating $K_4,K_5,$ and $K_6$:
\begin{align*}
K_4&\leq 3(\mathcal{E}_0(\F,\G)^3 +  \mathcal{E}_0(\F,\G)^2 \left[{2}\langle f_0 \rangle + \langle g_0 \rangle\right] + \mathcal{E}_0(\F,\G) \left[\langle f_0 \rangle^2 +2 \langle f_0 \rangle \langle g_0 \rangle\right])\|\G\|_{\dot{H}^{4}}\|\F\|_{\dot{H}^{4}}\\
&\quad+\left[2 (\mathscr{E}_0(\F,\G)^3 + 3 \mathscr{E}_0(\F,\G)^2 \langle f_0\rangle + 3 \mathscr{E}_0(\F,\G) \langle f_0 \rangle^2)\right]\|\G\|_{\dot{H}^{4}}\|\F\|_{\dot{H}^{4}}.
\end{align*}
Let us now turn to $K_5$, which can be estimated as
\begin{align*}
K_5 &\leq 6\left(\mathscr{E}_0(\F, \G)^2+(\langle f_0 \rangle + \langle g_0 \rangle)\mathscr{E}_0(\F, \G) + \langle f_0 \rangle\langle g_0 \rangle \right)\|\partial_x \F\|_{L_4}\|\partial_x^3 \G\|_{L_4}\|\F\|_{\dot H^4} \\
&\quad + 3\left( \mathscr{E}_0(\F, \G)^2 + 2 \langle f_0 \rangle \mathscr{E}_0(\F, \G) + \langle f_0 \rangle^2\right)\|\partial_x\G\|_{L_4}\|\partial_x^3 \G\|_{L_4}\|\F\|_{\dot H^4}.
\end{align*}
The same estimate as in \eqref{eq:estimatecross} implies that
\begin{align*}
\|\partial_x \G\|_{L_4}\|\partial_x^3 \G\|_{L_4}\|\F\|_{\dot H^4}&\leq {12}\|\F\|_{\dot{H}^{4}}\sqrt[4]{E_4(\F,\G)}\sqrt{\mathscr{E}_0(\F,\G)\|\F\|_{\dot{H}^{2}}}\sqrt[4]{\mathscr{E}_0(\F,\G)\mathscr{E}_4(\F,\G)}\\
&\leq  \frac{3}{4}E_4(\F,\G){\mathscr{E}}_0(\F,\G)+{C}\|\G\|_{\dot{H}^{2}}^2 {\mathscr{E}}_4(\F,\G),
\end{align*}
by Young's inequality (as in \eqref{eq:Y}). Thus
\begin{align*}
K_5 &\leq \frac{9}{4}\left(3\mathscr{E}_0(\F, \G)^2+(4\langle f_0 \rangle + 2\langle g_0 \rangle)\mathscr{E}_0(\F, \G) + 2\langle f_0 \rangle\langle g_0 \rangle + \langle f_0 \rangle^2 \right)E_4(\F,\G){\mathscr{E}}_0(\F,\G) \\
&\quad + c E_2(\F,\G){\mathscr{E}}_4(\F,\G).
\end{align*}
Similarly, we estimate
\begin{align*}
K_6 \leq 6 \left( \mathscr{E}_0(\F, \G)^2+2\mathscr{E}_0(\F,\G)\langle f_0 \rangle +  \langle f_0 \rangle^2 \right)\|\partial_x \F\|_{L_4}\|\partial_x^3 \G\|_{L_4}\|\F\|_{\dot H^4}.
\end{align*}
Using the same estimate as above and applying Young's inequality, we obtain that
\begin{align*}
K_6 \leq \frac{9}{2} \left( \mathscr{E}_0(\F, \G)^2+2\mathscr{E}_0(\F,\G)\langle f_0 \rangle +  \langle f_0 \rangle^2 \right)E_4(\F,\G){\mathscr{E}}_0(\F,\G) +cE_2(\F,\G){\mathscr{E}}_4(\F,\G)
\end{align*}
and thus
\begin{align*}
\left| \int_{\TT} N_2 \mathscr{D}\F dx\right| & \leq 3(\mathscr{E}_0(\F,\G)^3 +  \mathscr{E}_0(\F,\G)^2 \left[{2}\langle f_0 \rangle + \langle g_0 \rangle\right] + \mathcal{E}_0(\F,\G) \left[\langle f_0 \rangle^2 +2 \langle f_0 \rangle \langle g_0 \rangle\right])\|\G\|_{\dot{H}^{4}}\|\F\|_{\dot{H}^{4}}\\
&\quad+\left[2 (\mathscr{E}_0(\F,\G)^3 + 3 \mathscr{E}_0(\F,\G)^2 \langle f_0\rangle + 3 \mathscr{E}_0(\F,\G) \langle f_0 \rangle^2)\right]\|\G\|_{\dot{H}^{4}}\|\F\|_{\dot{H}^{4}}\\
&\quad  +\frac{9}{4}\left(5\mathscr{E}_0(\F, \G)^2+(8\langle f_0 \rangle + 2\langle g_0 \rangle)\mathscr{E}_0(\F, \G) + 2\langle f_0 \rangle\langle g_0 \rangle + 3\langle f_0 \rangle^2 \right)E_4(\F,\G)\mathcal{E}_0(\F,\G) \\
&\quad +cE_2(\F,\G){\mathscr{E}}_4(\F,\G).
\end{align*}
Repeating all estimates for $K_7, \ldots K_{12}$, we obtain that
\begin{align*}
\left|\int_\TT N_3\pax\mathscr{D} \G dx\right| &\leq \left( \mathscr{E}_0(\F,\G)^3(2\mu +3\rho+6) + \mathscr{E}_0(\F,\G)^2\left( (6\mu + 3\rho+12)\langle g_0 \rangle + 6(\rho+1) \langle f_0 \rangle\right)\right. \\
&\left.\quad + \mathscr{E}_0(\F,\G)\left( 6(\mu+1) \langle g_0 \rangle^2 + 3\rho \langle f_0 \rangle^2 + (6 \rho+12) \langle f_0 \rangle \langle g_0 \rangle\right) \right)\|\F\|_{\dot H^4}\|\G\|_{\dot H^4}\\
&\quad +\frac{9}{4}\left(\mathscr{E}_0(\F,\G)^2 \left(2\mu + 3\rho +6 \right) + 4\mathscr{E}_0(\F,\G)\left((\mu + \rho +1)\langle g_0 \rangle + 4(\rho+1) \langle f_0 \rangle \right)\right. \\
& \quad \left. + 2(\mu+1)\langle g_0 \rangle^2 + \rho\langle f_0 \rangle^2 + (2\rho +4)\langle f_0 \rangle \langle g_0 \rangle\right)E_4(\F,\G)\mathcal{E}_0(\F,\G)\\
&\quad+cE_2(\F,\G){\mathscr{E}}_4(\F,\G)
\end{align*}
and
\begin{align*}
\left|\int_\TT N_4\pax\mathscr{D} \G dx\right| &\leq  \left( \mathscr{E}_0(\F,\G)^3(2\mu +9)+ \mathscr{E}_0(\F,\G)^2(6\mu \langle g_0 \rangle + 12 \langle f_0 \rangle + 15 \langle g_0 \rangle) \right.\\
& \left. \quad + \mathscr{E}_0(\F,\G)(6\mu \langle g_0 \rangle^2 + 3 \langle f_0 \rangle^2 + 18 \langle f_0 \rangle \langle g_0 \rangle+6\langle g_0 \rangle^2\right) \|\G\|_{\dot H^4}\\
&\quad  + cE_2(\F,\G){\mathscr{E}}_4(\F, \G).
\end{align*}


Set now
\begin{align*}
\eta_1&:=(2\rho-1) \langle f_0 \rangle^3 - \frac{3}{2}(\rho-1) \langle f_0 \rangle^2 \langle g_0 \rangle - 3\langle f_0 \rangle \langle g_0 \rangle^2-\mu \langle g_0 \rangle^3 \\
&\qquad \qquad  - \mathcal{E}_0(\F_0,\G_0)^3\left (32+\frac{55}{8}\rho +\frac{15}{2}\mu \right)\\
&\qquad \qquad -\mathcal{E}_0(\F_0,\G_0)^2 \left(\langle f_0 \rangle \left(66+18 \rho \right) + \langle g_0 \rangle \left(36+\frac{21}{2}\rho+18\mu \right)\right)\\
&\qquad \qquad -\mathcal{E}_0(\F_0,\G_0)\left( \langle f_0 \rangle^2 \left(\frac{57}{4}+\frac{39}{4}\rho \right) +\langle g_0 \rangle^2 \left(\frac{27}{2}+\frac{15}{2}\mu \right) + \langle f_0 \rangle\langle g_0 \rangle \left( \frac{81}{2}+\frac{15}{2}\rho\right)   \right),\\
\eta_2&:=\mu \langle g_0 \rangle^3+3\langle f_0 \rangle \langle g_0 \rangle^2 -\frac{3}{2}(\rho-1)\langle f_0 \rangle^2 \langle g_0 \rangle - \langle f_0\rangle^3 \\
&\qquad \qquad  - \mathcal{E}_0(\F_0,\G_0)^3\left (32+\frac{55}{8}\rho +\frac{15}{2}\mu \right)\\
&\qquad \qquad -\mathcal{E}_0(\F_0,\G_0)^2 \left(\langle f_0 \rangle \left(66+18 \rho \right) + \langle g_0 \rangle \left(36+\frac{21}{2}\rho+18\mu \right)\right)\\
&\qquad \qquad -\mathcal{E}_0(\F_0,\G_0)\left( \langle f_0 \rangle^2 \left(\frac{57}{4}+\frac{39}{4}\rho \right) +\langle g_0 \rangle^2 \left(\frac{27}{2}+\frac{15}{2}\mu \right) + \langle f_0 \rangle\langle g_0 \rangle \left( \frac{81}{2}+\frac{15}{2}\rho\right)   \right).
\end{align*}
The assumptions in the theorem guarantee that $\eta_1, \eta_2>0$.
Eventually, we arrive at
\begin{align*}
\frac{1}{2}\frac{d}{dt}E_2(\F,\G)\leq -\eta E_4(\F,\G) + c E_2(\F,\G){\mathscr{E}}_4(\F,\G),
\end{align*}
where $\eta:= \min\{\eta_1, \eta_2\}$.
From here the argumentation follows the lines of  the proof of Theorem~\ref{theorem3}.

\medskip

\textbf{Gravity driven flow:} Now, we consider the case where $\mathscr{D}=\partial_x$. Here we have that
\begin{align*}
K_1&\leq \left((3+2\rho)\mathscr{E}_0(\F,\G)^3 + \mathscr{E}_0(\F,\G)^2\left((6+6\rho)\langle f_0 \rangle +3 \langle g_0 \rangle \right) \right.\\
&\left.\qquad + \mathscr{E}_0(\F,\G) \left((3+6\rho)\langle f_0 \rangle^2 + 6 \langle g_0 \rangle \langle f_0 \rangle \right)\right)\|\G\|_{\dot{H}^{2}}\|\F\|_{\dot{H}^{2}},
\end{align*}
and
\begin{align*}
K_2&\leq\frac{3}{2}\bigg{[}\mathscr{E}_0(\F,\G)^2(2\|\pax^2\F\|_{L^\infty}+\|\pax^2\G\|_{L^\infty}) + 2 \langle f_0 \rangle \mathscr{E}_0(\F,\G)(\|\pax^2\F\|_{L^\infty}+\|\pax^2\G\|_{L^\infty})\\
&\quad + 2\mathscr{E}_0(\F,\G)\|\pax^2\F\|_{L^\infty} \langle g_0 \rangle + \|\pax^2\G\|_{L^\infty} \langle f_0 \rangle^2 +2\|\pax^2\F\|_{L^\infty} \langle f_0 \rangle \langle g_0 \rangle\bigg{]}\|\pax \F\|_{L^2}^2\\
&\quad+3\|\pax \F\|_{L^\infty}\bigg{[}\mathscr{E}_0(\F,\G)(\|\pax \F\|_{L^\infty}+\|\pax \G\|_{L^\infty}) + \left( \|\pax \G\|_{L^\infty}\langle f_0 \rangle + \|\pax \F\|_{L^\infty} \langle g_0 \rangle\right) \bigg{]}\|\pax \F\|_{L^2}^2\\
&\leq c\mathscr{E}_2(\F,\G)E_1(\F,\G),
\end{align*}
where we have used the Kolmogorov-Landau inequality \eqref{kolmogorov-landau}. Analogously, we have that
$$
K_3\leq c\mathscr{E}_2(\F,\G)E_1(\F,\G).
$$
The cross term can be estimated via
\begin{align*}
K_4&\leq \left(5\mathscr{E}_0(\F,\G)^3 + \mathscr{E}_0(\F,\G)^2\left(12\langle f_0 \rangle +3 \langle g_0 \rangle \right) + \mathscr{E}_0(\F,\G) \left(9\langle f_0 \rangle^2 + 6 \langle g_0 \rangle \langle f_0 \rangle \right)\right)\|\G\|_{\dot{H}^{2}}\|\F\|_{\dot{H}^{2}}
\end{align*}
and
\[
K_5+K_6\leq c\mathscr{E}_2(\F,\G)E_1(\F,\G).
\]
Similarly, we obtain that
\begin{align*}
K_7&\leq \left(\mathscr{E}_0(\F,\G)^3 (2\mu + 3\rho +6) + \mathscr{E}_0(\F,\G)^2\left( \langle f_0 \rangle (6\rho+6)+ \langle g_0 \rangle (2\mu + 3\rho + 12)\right) \right.\\
&\qquad \left.+ \mathscr{E}_0(\F,\G)\left(3\rho \langle f_0 \rangle^2 + \langle g_0 \rangle^2(6\mu + 6)+ \langle f_0 \rangle \langle g_0 \rangle (6\rho + 12) \right)\right)\|\F\|_{\dot H^2}\|\G\|_{\dot H^2},
\end{align*}
and
\begin{align*}
K_{10}&\leq \left(\mathscr{E}_0(\F,\G)^3 (2\mu + 9) + \mathscr{E}_0(\F,\G)^2\left( 12\langle f_0 \rangle + \langle g_0 \rangle (2\mu + 15)\right) \right.\\
&\qquad \left.+ \mathscr{E}_0(\F,\G)\left(3 \langle f_0 \rangle^2 + \langle g_0 \rangle^2(6\mu + 6)+ 18\langle f_0 \rangle \langle g_0 \rangle  \right)\right)\|\F\|_{\dot H^2}\|\G\|_{\dot H^2},
\end{align*}
while
\begin{align*}
K_8+K_9+K_{11}+K_{12}\leq c\mathscr{E}_2(\F,\G)E_1(\F,\G)
\end{align*}
Setting
\begin{align*}
\kappa_1&:=(2\rho-1) \langle f_0 \rangle^3 - \frac{3}{2}(\rho-1) \langle f_0 \rangle^2 \langle g_0 \rangle - 3\langle f_0 \rangle \langle g_0 \rangle^2-\mu \langle g_0 \rangle^3 \\
&\qquad \qquad -\frac{1}{2}{\mathcal{E}}_0(\F_0,\G_0)^3\left(23+5\rho+4\mu \right)\\
&\qquad \qquad -\frac{1}{2}{\mathcal{E}}_0(\F_0,\G_0)^2\left( \langle f_0 \rangle (36+12\rho)+ \langle g_0 \rangle (33+3\rho+4\mu)\right)\\
&\qquad \qquad -\frac{1}{2}{\mathcal{E}}_0(\F_0,\G_0)\left( \langle f_0 \rangle^2(15+9\rho)+ \langle g_0 \rangle^2(12+12\mu)+ \langle f_0 \rangle \langle g_0 \rangle (42+6\rho) \right),\\
\kappa_2&:= \mu \langle g_0 \rangle^3+3\langle f_0 \rangle \langle g_0 \rangle^2 -\frac{3}{2}(\rho-1)\langle f_0 \rangle^2 \langle g_0 \rangle - \langle f_0\rangle^3 \\
&\qquad \qquad -\frac{1}{2}{\mathcal{E}}_0(\F_0,\G_0)^3\left(23+5\rho+4\mu \right)\\
&\qquad \qquad -\frac{1}{2}{\mathcal{E}}_0(\F_0,\G_0)^2\left( \langle f_0 \rangle (36+12\rho)+ \langle g_0 \rangle (33+3\rho+4\mu)\right)\\
&\qquad \qquad -\frac{1}{2}{\mathcal{E}}_0(\F_0,\G_0)\left( \langle f_0 \rangle^2(15+9\rho)+ \langle g_0 \rangle^2(12+12\mu)+ \langle f_0 \rangle \langle g_0 \rangle (42+6\rho) \right),
\end{align*}
we obtain that
\begin{align*}
\frac{1}{2}\frac{d}{dt}E_1(\F, \G)\leq -\kappa E_2(\F,\G) + c\mathscr{E}_2(\F,\G)E_1(\F,\G),
\end{align*}
where $\kappa:= \min\{\kappa_1,\kappa_2\}$ is a positive constant in view of the assumptions of the theorem. From here we can conclude as before.

\bigskip

\section{Conclusion}
We have proved several global well-posedness results for two strongly coupled systems of degenerate quasilinear parabolic partial differential equations. We established conditions on the initial data and the physical parameters ensuring the global existence of solutions to the thin film Muskat and the thin film Stokes systems. One of the advantages of our approach is that the size restrictions required on the initial data are \emph{explicit} (in particular, they do not depend on universal constants coming from functional inequalities). Remarkably, these size restrictions only effect a very weak norm (in the Wiener Algebra $A(\TT)$). Furthermore, they are $O(1)$ of the physical parameters in the problem. In this respect, we improved the results in \cite{escher2012modelling, escher2013thin} where the size restrictions were assumed in higher order Sobolev norms and not explicit. Eventually, we would like to point out that our technique does not rely on a gradient flow structure or a particular form of an energy functional. Therefore, they are adaptable also to other systems of partial differential equations.

\bigskip

\section*{Acknowledgements}RGB was partially supported by the LABEX MILYON (ANR-10-LABX-0070) of Universit\'e de Lyon, within the program ``Investissements d'Avenir'' (ANR-11-IDEX-0007) operated by the French National Research Agency (ANR). GB recognizes the support of grant no. 250070 of the Research Council of Norway. Part of the research leading to results presented here was conducted during a short  stay of GB at Institut Camille Jordan under project DYFICOLTI, ANR-13-BS01-0003-01 support. 

\bibliographystyle{plain}
\bibliography{referencesN}

\end{document}